\journalname{Journal of Statistical Physics}
\newcommand{\dd}[2]{\frac{\diff#1}{\diff#2}}
\DeclareMathOperator{\diff}{d}
\def\dd{{\color{red}\mathsf d}}
\begin{document}

\title{Data assimilation for a quasi-geostrophic model with circulation-preserving stochastic transport noise
}
\subtitle{}


\author{Colin Cotter \and
        Dan Crisan \and
        Darryl Holm   \and
        Wei Pan    \and
        Igor Shevchenko
}


\institute{C. Cotter \at
              \email{colin.cotter@imperial.ac.uk}           
           \and
           D. Crisan \at
           \email{d.crisan@imperial.ac.uk}
           \and
           D. Holm \at
           \email{d.holm@imperial.ac.uk}
           \and
           W. Pan \at
           \email{wei.pan@imperial.ac.uk}
           \and
           I. Shevchenko \at
           \email{i.shevchenko@imperial.ac.uk}
           \\
Department of Mathematics, Imperial College London, London, SW7 2AZ, UK           
}

\date{Received: date / Accepted: date}

\maketitle

\begin{abstract}
This paper contains the latest installment of the authors' project (see~\cite{CCHWS2019_1,CCHWS2019_2,CCHWS2019_3}) 
on developing ensemble based data assimilation methodology for high dimensional fluid dynamics models. 
The algorithm presented here is a particle filter that combines model reduction, tempering, jittering, and nudging. 
The methodology is tested on a two-layer quasi-geostrophic model for a $\beta$-plane channel flow
with $O(10^6)$ degrees of freedom out of which only a minute fraction are noisily observed. The model is reduced by following the stochastic variational approach for geophysical fluid dynamics introduced in~\cite{holm2015variational} as a framework for deriving stochastic parametrisations for unresolved scales. 
The reduction is substantial: the computations are done only for $O(10^4)$ degrees of freedom.  We introduce a stochastic time-stepping scheme for the two-layer model and prove its consistency in time. Then, we analyze the effect of the different procedures (tempering combined with jittering and nudging) on the performance of the data assimilation
procedure using the reduced model, as well as how the dimension of the observational data (the number of "weather stations") and the data assimilation step affect the accuracy and uncertainty of the results. 

\keywords{Geophysical fluid dynamics \and Multi-layer quasi-geostrophic model \and Stochastic parameterisations \and Stochastic Transport Noise \and 
 Data assimilation \and Tempering \and Markov Chain Monte Carlo method \and Jittering \and Nudging}
\end{abstract}

\section{Introduction}
\label{sec:intro}

In recent years, there has been an increased scientific effort in developing ensemble based data assimilation as an alternative to variational data assimilation which is currently used in operation centres for numerical weather prediction\footnote{However, 
see~\cite{Potthast_et_al2019} for a recent application of particle filters within an operational framework.}. Such methods can be more suited for fully nonlinear systems and complex observation operators. The work presented in this paper is
part of this wider effort (see the survey paper~\cite{VanLeeuwen2019} and the references therein for recent developments in this direction). 

The cornerstone of the current work is the introduction of stochastic parametrization to model uncertainity via the so-called Stochastic Advection by Lie Transport (SALT) approach~\cite{holm2015variational}.  The stochasticity is introduced into the advection part of
the dynamics via a constrained variational principle. This is a general approach for deriving stochastic partial differential equation (SPDE) models for geophysical 
fluid dynamics (GFD). In this work we apply it 
to the N-layer quasi-geostrophic model (see Section 2 for details).  By adding stochasticity into the advection operator, one can model uncertain transport behaviour. The uncertainty in our case occurs as we assimilate the data coming from observing a high resolution model, but use low resolution realisations of the model. This model reduction is crucial as it enables us to complete the task by using fairly modest computational resources\footnote{We used a stand-alone workstation with 128GB of RAM and 2x6-core Intel Xeon E5-2643v4 3.4GHz processors.}. 
The stochastic term used to model the missing uncertainties is calibrated by using a data driven approach as described in~\cite{CCHWS2019_2}. 

This paper complements the work done in \cite{CCHWS2019_3} where the "true state" is chosen to be the solution of the Euler equation with forcing and damping. We choose the quasi-geostrophic (QG) equations for this work as it has a qualitatively different behaviour from Euler: As one can see in Fig.~\ref{fig:qf_all_grids}, the solution of the QG equations is far less homogeneous than the that of the Euler equation, exhibiting multiple large-scale zonally elongated jets as well as small-scale vortices. Indeed, one of the findings of our work is that the formation of jets is heavily influenced by the size of the grid: the coarser the grid the less jets are formed. Nevertheless, once data assimmilation is applied to the coarser model (stochastically parametrized and properly calibrated), the number of jets can be preserved. The occurrence of the jets makes the data assimilation problem harder. Whilst in \cite{CCHWS2019_3} it sufficed to use only tempering and jittering to assimilate the data, in the current work we obtained far better results only after we added the nudging procedure to the two already used in \cite{CCHWS2019_3}. Another difference from the work done in \cite{CCHWS2019_3} was the choice of the initial ensemble, which here chosen as a set of independent realizations from the solution of the stochastically perturbed QG equation. We found this to be a more natural alternative to the one used in \cite{CCHWS2019_3}.         

The use of the combination of tempering and jittering is theoretically justified. Indeed in~\cite{BCJ2016} it is shown that the use of the two procedures can produce particle filters suitable for solving high dimensional problems. More precisely, it is proved that the effective sample size of the ensemble of particles remains under control as the dimension $d$ of the underlying system increases with a computational cost that is at most quadratic in $d$. By contrast, a generic (bootstrap) particle filter would require a computational cost that is exponential in $d$. 

As is usually the case in data assimilation, the particle filter proceeds by alternating between forecast and analysis cycles. In each analysis step, observations of the current (and possibly past) state of a system are combined with the results from a prediction model (the forecast) to produce an analysis. The tempering and jittering are used to complete the analysis step, whilst the nudging procedure is used in the forecast step. In the absence of nudging, the ensemble particles have trajectories that are independent solutions of the stochastic QG equations. Nudging consists in adding a drift to the trajectories of the particles with the aim of maximising the likelihood of their positions given the observation data. This introduces a bias in the system that is corrected at the analysis step. It follows that also the nudging procedure is theoretically justified through a standard convergence argument, see for example~\cite{Crisan2002ASO}. It follows that the data assimilation algorithm presented in this paper will give an asymptotically (as the number of particles increases) consistent approximation of the posterior distribution of the state given the data. That does not mean that the empirical distribution of the ensemble is a good approximation of the posterior. The size of the ensemble is 100 and this is certainly not enough to approximate a posterior distribution in a state space of dimension $O(10^4)$. However, it offers a sound theoretical basis for the algorithms presented here. We give further details of this issue in Section~\ref{sec:da_methods}.         
 
The paper is structured as follows. 
In Section~\ref{sec:ham} we describe the deterministic N-layer QG equations and Hamiltonian formulation for the stochastic
multi-layer QG model.
Section~\ref{sec:2d_qg} presents the deterministic and stochastic QG equations, and numerical methods for the QG model.
In Section~\ref{sec:2d_qg_consistency} we prove that the stochastic CABARET scheme is consistent with the stochastic QG equation
in the mean square sense in time.
In Section~\ref{sec:da_methods} we discuss different procedures used: Bootstrap Particle Filter, jittering, tempering and nudging procedures. 
In Section~\ref{sec:num_res} we present and discuss the numerical experiments and results, and study
how the data assimilation methods influence the quality of the forecast given by the stochastic QG model. The following is a summary of the main numerical experiments contained in this paper:
\begin{itemize}
    \item Dependence of the relative bias and the ensemble mean $l_2$-norm relative error between the true deterministic solution and its
    stochastic parameterisation on the data assimilation 
    step (Figs.~\ref{fig:av_relerr_v_129x65} and~\ref{fig:av_relerr_v_129x65_dt_2h}), 
    grid resolution (Fig.~\ref{fig:av_relerr_v_257x129}), data assimilation methods
    (Figs.~\ref{fig:av_relerr_v_129x65_NDG_ws16} and~\ref{fig:av_relerr_v_257x129_NDG_ws16}),
    and the number of weather stations (Figs.~\ref{fig:av_relerr_v_129x65_NDG_ws32} and~\ref{fig:av_relerr_v_257x129_NDG_ws32});
   \item Analysis of how uncertainty of the stochastic spread is influenced by the data assimilation 
    step (Figs.~\ref{fig:spread_129x65_16ws_4h} and~\ref{fig:spread_129x65_16ws_2h}), 
    grid resolution (Fig.~\ref{fig:spread_257x129_16ws_4h}), data assimilation methods
    (Figs.~\ref{fig:spread_129x65_16ws_4h_nudging} and~\ref{fig:spread_257x129_16ws_4h_nudging}),
    and the number of weather stations (Figs.~\ref{fig:spread_129x65_32ws_4h_nudging} and~\ref{fig:spread_257x129_32ws_4h_nudging});
   \item Forecast reliability rank histrograms for the stochastic QG model with and without the data assimilation procedure (Fig.~\ref{fig:rank_histograms_257x129_32ws_4h}).    
\end{itemize}
Finally, Section~\ref{sec:concl} concludes the present work and discusses the outlook for future research.


\section{Hamiltonian equations of motion for a multi-layer fluid}
\label{sec:ham}

\subsection{The deterministic $N$-layer quasi-geostrophic (NLQG) equations}
\label{sec:D_NLQG}

Consider a stratified fluid of $N$ superimposed layers of constant densities $\rho_1 < \dots<\rho_N$; 
the layers being stacked according to increasing density, such that the density of the upper layer is $\rho_1$. 
The quasi-geostrophic (QG) approximation assumes that the velocity field is constant in the vertical direction and 
that in the horizontal direction the motion obeys a system of coupled incompressible shallow water equations. 
We shall denote by $\mathbf{u}_i = (- \,\partial_y\psi_i, \partial_x\psi_i) = \mathbf{\hat{z}}\times\nabla \psi_i$ the velocity field of the $i^{th}$ layer, where $\psi_i$ is its stream function, and the layers are numbered from the top to the bottom. We define the 
potential vorticity of the $i^{th}$ layer as
\begin{equation}\label{omsubi}
\omega_i = q_i + f_i = \Delta  \psi_i + \alpha_i \sum\limits^N_{j=1} T_{ij}\psi_j + f_i
=: \sum\limits^N_{j=1} E_{ij}\psi_j +  f_i
\,,\qquad
i=1,\dots,N,
\end{equation}
where the potential vorticity  is defined as $\omega_i = q_i + f_i $, the  elliptic operator $E_{ij}$ defines the layer vorticity,
\[q_i = \sum\limits^N_{j=1} E_{ij}\psi_j:= \Delta \psi_i + \alpha_i \sum\limits^N_{j=1} T_{ij}\psi_j\,,\] and the constant parameters $\alpha_i $, $f_i $, $f_0$, $\beta$, $f_N$ are
\begin{align}\label{paramdefs}
\begin{split}
\alpha_i &= (f_0^2/g)\big((\rho_{i+1}-\rho_i)/\rho_0\big)D_i
\,,\qquad i=1,\dots,N,
\\
f_i &= f_0 + \beta y
\,,\qquad  i=1,\dots,N-1,
\\
f_N &= f_0 + \beta y + f_0 d(y)/D_N,
\\
f_0 &= 2\Omega \sin(\phi_0)
\,,\qquad 
\beta = 2\Omega \cos(\phi_0)/R\,,
\end{split}
\end{align}
where $g$ is the gravitational acceleration, $\rho_0 = (1/N)(\rho_1 + \dots + \rho_N)$ is the mean density, $D_i$ is the mean thickness of the $i^{th}$ layer, $R$ is the Earth's radius, $\Omega$ is the Earth's angular velocity, $\phi_0$ is the reference latitude, and $d(y)$ is the shape of the bottom. The $N \times N$ symmetric tri-diagonal matrix $T_{ij}$ represents the second-order difference operator,
\begin{equation}\label{2ndDiffTop}
\sum\limits^N_{j=1} T_{ij}\psi_j  = (\psi_{i-1} - \psi_i) - (\psi_i - \psi_{i+1})\,,
\end{equation}
so that 
\begin{equation}\label{2ndDiffT}
T_{ij} = 
\begin{bmatrix}
-1  & 1     & 0      &  \ldots      & \ldots     & 0  \\
1  & -2 & \ddots & \ddots &        &  \vdots\\
0 & \ddots & \ddots & \ddots & \ddots &  \vdots  \\
\vdots   & \ddots & \ddots & \ddots & \ddots & 0  \\
\vdots  &        & \ddots & \ddots & -2 & 1 \\
0 & \ldots      &   \ldots     & 0     & 1      & -1
\end{bmatrix}
\,,\qquad
i,j=1,\dots,N.
\end{equation}
With these standard notations, the motion of the NLQG fluid is given by
\begin{equation}\label{NlayerVortDyn}
\partial_t q_i  = \Big\{ \omega_i ,\,\psi_i \Big\}_{xy}
=
-\, \mathbf{\hat{z}} \times \nabla \psi_i  \cdot \nabla \omega_i
=
-\,
\mathbf{u}_i \cdot \nabla \omega_i
\,,\qquad
i =1,\dots,N,
\end{equation}
where $\mathbf{\hat{z}}$ is the vertical unit vector, $\mathbf{u}_i = 
\mathbf{\hat{z}} \times \nabla \psi_i  $ is the horizontal flow velocity in the $i^{th}$ layer, and the brackets in 
\begin{equation}\label{canPoissonBrkt}
\{\omega,\psi\}=J(\omega,\psi)=\omega_x\psi_y-\omega_y\psi_x
= \mathbf{\hat{z}}\cdot \nabla \omega \times \nabla \psi 
\end{equation}
denote the usual $xy$ canonical Poisson bracket in $\mathbb{R}^2$. The boundary conditions in a compact domain $D\subset\mathbb{R}^2$ with smooth boundary $\cup_{j}\partial{D}_j$
are $\psi_j|_{(\partial{D}_j)} = constant$, whereas in the entire $\mathbb{R}^2$ they are
$\lim_{(x, y)\to±\infty} \nabla\psi_j=0$.
The space of variables with canonical Poisson bracket in \eqref{canPoissonBrkt} consists of $N$-tuples $(q_1,\dots, q_N)$ of real-valued functions on $D$ (the ``generalized vorticities'') with the above boundary conditions and certain smoothness properties that guarantee that solutions are at least of class $C^1$.
The Hamiltonian for the $N$-layer vorticity dynamics in \eqref{NlayerVortDyn} is the total energy
\begin{equation}\label{NlayerVortDyn-erg}
H(q_1,\dots, q_N) = \frac12\int_D
\Big[\sum\limits^N_{i=1} \frac{1}{\alpha_i} |\nabla\psi_i|^2
+
\sum\limits^{N-1}_{i=1} (\psi_{i+1}-\psi_i)^2\Big]dx\,dy
\,,\qquad
i =1,\dots,N,
\end{equation}
with stream function $\psi_i$ determined from vorticity $\omega_i$ by solving the elliptic equation \eqref{omsubi} for $q_i=\omega_i-f_i$ with 
\begin{equation}\label{elliptic-op}
q_i = \sum\limits^N_{j=1} E_{ij}\psi_j\,,
\end{equation}
for the boundary conditions discussed above. Hence, we find that 
\begin{equation}\label{NlayerVortDyn-erq}
H(q_1,\dots, q_N) 
= -\frac12\int_D\sum\limits^N_{i,j=1} \psi_iE_{ij}\psi_j dx\,dy
= -\frac12\int_D\sum\limits^N_{i,j=1} q_i E^{-1}_{ij}*q_j dx\,dy
= -\frac12\int_D\sum\limits^N_{i=1} q_i \psi_i dx\,dy\,,
\end{equation}
where $E^{-1}_{ij}*q_j = \psi_i$ denotes convolution with the Greens function $E^{-1}_{ij}$ for the symmetric elliptic operator $E_{ij}$. The relation \eqref{NlayerVortDyn-erq} means that $\delta H/\delta q_i = \psi_i$ for the variational derivative of the Hamiltonian functional $H$ with respect to the function $q_j$.

\begin{remark}[Lie--Poisson bracket]
Equations \eqref{NlayerVortDyn} are Hamiltonian with respect to the Lie--Poisson bracket on the dual of $\oplus \sum\limits^N_{i=1}{\cal F}(D)$ given by
\begin{equation}\label{VortLie-PoissonBrkt}
\{F,H\}(q_1,\dots, q_N)
=
\sum\limits^N_{i=1} \int_D (q_i + f_i(x))
\left\{\frac{\delta F}{\delta q_i},\,\frac{\delta H}{\delta q_i}\right\}_{xy}dx\,dy\,,
\end{equation}
for arbitrary functions $F$ and $H$, provided the domain of flow $D$ is simply connected.%
\footnote{If the domain $D$ is not simply connected, then variational derivatives such as $\delta H/\delta q_i$ must be interpreted with care, because in that case the boundary conditions on $\psi_i$ will come into play \cite{McWilliams1977}.}

The motion equations \eqref{NlayerVortDyn} for $q_i$ now follow from the Lie--Poisson bracket \eqref{VortLie-PoissonBrkt} after an integration by parts to write it equivalently as
\begin{equation}\label{VortLie-PoissonBrkt2}
\frac{dF}{dt} = \{F,H\}(q_1,\dots, q_N)
=
-\sum\limits^N_{i=1} \int_D \frac{\delta F}{\delta q_i}
\left\{q_i + f_i(x) ,\,\frac{\delta H}{\delta q_i}\right\}_{xy}dx\,dy
\,,
\end{equation}
and recalling that $\delta H/\delta q_i =-E^{-1}_{ij}*q_j=- \,\psi_i$, $i=1,2,\dots,N$, so that equations \eqref{NlayerVortDyn} follow. 
\end{remark}
\begin{remark}[Constants of motion]
According to equations \eqref{NlayerVortDyn}, the material time derivative of $\omega_i(t, x, y)$ vanishes along the flow lines of the divergence-free horizontal velocity $\mathbf{u}_i = \mathbf{\hat{z}}\times\nabla \psi_i   $. Consequently, for every differentiable function $\Phi_i: \mathbb{R}\to\mathbb{R}$ the functional
\begin{equation}\label{Casimirs}
C_{\Phi_i}(\omega_i)
=
\int_D \Phi_i (\omega_i)\,dx\,dy
\end{equation}
is a conserved quantity for the system \eqref{NlayerVortDyn} for $i =1,\dots,N$, provided the integrals exist. By Kelvin's circulation theorem, the following integrals over an advected domain $S(t)$ in the plane are also conserved,
\begin{equation}\label{KelvinThm}
I_i(t) 
=
\int_{S(t)} \omega_i \,dx\,dy
= 
\int_{\partial S(t)}  \nabla \psi_i \cdot \mathbf{\hat{n}} \,ds
\,,
\end{equation}
where $\mathbf{\hat{n}}$ is the horizontal outward unit normal and $ds$ is the arclength parameter of the closed curve $\partial S(t)$ bounding the domain $S(t)$ moving with the flow. 
\end{remark}

\subsection{Hamiltonian formulation for the stochastic NLQG fluid\label{sec:D_NLQG_ham}}

Having understood the geometric structure (Lie--Poisson bracket, constants of motion and Kelvin circulation theorem) for the deterministic case, we can introduce the stochastic versions of equations \eqref{NlayerVortDyn} by simply making the Hamiltonian stochastic while preserving the previous geometric structure, as done in the previous section. Namely, we choose
\begin{equation}\label{Ham-stoch integral}
h(t) = h(0) + \int_0^t H(\{q\})ds + \int_0^t \int_D \sum\limits^N_{i=1}\sum\limits^K_{k=1} q_i (s,x,y)\zeta^k_i(x,y) \,dx\,dy \circ dW_k(s)
\,,
\end{equation}
so that
\begin{equation}\label{Ham-stoch}
\dd h = H(\{q\})dt + \int_D \sum\limits^N_{i=1}\sum\limits^K_{k=1} q_i (t,x,y)\zeta^k_i(x,y) \,dx\,dy \circ dW_k(t)
\,,
\end{equation}
where the $\zeta^k_i(x,y)$, $k =1,\dots,K$ represent the correlations of the Stratonovich noise we have introduced in~\eqref{Ham-stoch}. 

For this stochastic Hamiltonian, the Lie--Poisson bracket \eqref{VortLie-PoissonBrkt}  leads to the following stochastic process for the transport of the $N$-layer generalised vortices,
\begin{equation}\label{NlayerVortDyn-stoch}
\dd q_i  = \Big\{ \omega_i ,\,\dd \psi \Big\}_{xy}
=
J\big(\omega_i ,\,\dd \psi \big)
=
\nabla (\dd \psi_i ) \times \mathbf{\hat{z}}\cdot \nabla \omega_i
=
-\,
\dd \mathbf{u}_i \cdot \nabla \omega_i
\,,\qquad
i =1,\dots,N,
\end{equation}
where we have defined the stochastic transport velocity in the $i^{th}$ layer
\begin{equation}\label{Nlayer-stoch-vel}
\dd \mathbf{u}_i :=
\mathbf{\hat{z}} \times \nabla (\dd \psi_i )  
\,,\qquad
i =1,\dots,N,
\end{equation}
in terms of its stochastic stream function 
\begin{equation}\label{Nlayer-stoch-dpsi}
\dd \psi_i  := \psi_i \,dt + \sum\limits^K_{k=1} \zeta^k_i(x,y) \circ dW_k(t)
= \dd \frac{\delta h}{\delta q_i}
\,,\qquad
i =1,\dots,N,
\end{equation}
determined from the variational derivative of the stochastic Hamiltonian in \eqref{Ham-stoch} with respect to the generalised vorticity $q_i$ in the $i^{th}$ layer. 

\begin{remark}[Constants of motion] The constants of motion $C_{\Phi_i}$ in \eqref{Casimirs} and the Kelvin circulation theorem for the integrals $I_i$ in \eqref{KelvinThm} persist for the stochastic generalised vorticity equations in \eqref{NlayerVortDyn-stoch}. This is because both of these properties follow from the Lie-Poisson bracket in \eqref{VortLie-PoissonBrkt}. However, the stochastic Hamiltonian in \eqref{Ham-stoch} is not conserved, since it depends explicitly on time, $t$, through its Stratonovich noise term.  
\end{remark}

\section{The two-dimensional multilayer quasi-geostrophic model}
\label{sec:2d_qg}

\subsection{Deterministic case}
\label{sec:determ}

The two-layer deterministic QG equations for the potential vorticity (PV) anomaly $q$ in a 
domain $\Omega$ are given by the PV material conservation law augmented with forcing and dissipation~\cite{Pedlosky1987,Vallis2006}:
\begin{equation}
\begin{aligned}
\frac{\partial q_1}{\partial t}+\mathbf{u}_1\cdot\nabla q_1&=\nu\Delta^2\psi_1-\beta\frac{\partial \psi_1}{\partial x},\\
\frac{\partial q_2}{\partial t}+\mathbf{u}_2\cdot\nabla q_2&=\nu\Delta^2\psi_2-\mu\Delta\psi_2-\beta\frac{\partial \psi_2}{\partial x},
\end{aligned}
\label{eq:pv}
\end{equation}
where $\psi$ is the stream function, $\beta$ is the planetary vorticity gradient, 
$\mu$ is the bottom friction parameter, $\nu$ is the lateral eddy viscosity,
and $\mathbf{u}=(u,v)$ is the velocity vector.
The computational domain $\Omega=[0,L_x]\times[0,L_y]\times[0,H]$ is a horizontally periodic flat-bottom channel of depth
$H=H_1+H_2$ given by two stacked isopycnal fluid layers of depth
$H_1$ and $H_2$. 
A mollified version of the existence and uniqueness theorem for the QG model can be found in~\cite{Farhat_et_al2012}.

Forcing in~\eqref{eq:pv} is introduced via a vertically sheared, baroclinically unstable background flow (e.g.,~\cite{BerloffKamenkovich2013})
\begin{equation}
\psi_i\rightarrow-U_i\,y+\psi_i,\quad i=1,2, 
\label{eq:forcing}
\end{equation}
where the parameters $U_i$ are background-flow zonal velocities.

The PV anomaly and stream function are related through two elliptic equations:
\begin{subequations}
\begin{align}
q_1=\Delta\psi_1+s_1(\psi_2-\psi_1),\\
q_2=\Delta\psi_2+s_2(\psi_1-\psi_2),
\end{align}
\label{eq:q_psi}
\end{subequations}
with stratification parameters $s_1$, $s_2$.

System~(\ref{eq:pv})-(\ref{eq:q_psi}) is augmented by the integral mass conservation constraint~\cite{McWilliams1977}
\begin{equation}
\frac{\partial}{\partial t}\iint\limits_{\Omega}(\psi_1-\psi_2)\ dydx=0,
\label{eq:masscon}
\end{equation}
by the periodic horizontal boundary conditions,
\begin{equation}
\boldsymbol{\psi}\Big|_{\Gamma_2}=\boldsymbol{\psi}\Big|_{\Gamma_4}\,,\quad \boldsymbol{\psi}=(\psi_1,\psi_2)\,,
\label{eq:bc24}
\end{equation}
and no-slip boundary conditions 
\begin{equation}
\boldsymbol{u}\Big|_{\Gamma_1}=\boldsymbol{u}\Big|_{\Gamma_3}=0\,.
\label{eq:bc13}
\end{equation}
set at northern and southern boundaries of the domain.

\subsubsection{Numerical method\label{sec:2d_qg_method_determ}}
The QG model~\eqref{eq:pv}-\eqref{eq:bc13} is solved with the CABARET method, which is based
on a second-order, non-dissipative and low-dispersive, conservative advection scheme~\cite{Karabasov_et_al2009}.
The CABARET scheme can simulate large-Reynolds-number flow regimes at lower
computational costs compared to conventional methods (see, e.g.,~\cite{Arakawa1966,WoodwardColella1984,ShuOsher1988,Hundsdorfer_et_al1995}),
since the scheme is low dispersive and non-oscillatory.

The CABARET method is a predictor-corrector scheme in which the components of the conservative variables
are updated at half time steps. Algorithm~\ref{alg:DCABARET} illustrates the principal steps of the CABARET method adopted from~\cite{Karabasov_et_al2009}.
To make the notation more concise, 
we introduce the forward difference operators in space 
$$\Delta_x[f]=\frac{f_{i+1,j}-f_{ij}}{\Delta x},\quad \Delta_y[f]=\frac{f_{i,j+1}-f_{ij}}{\Delta y},$$
and omit spatial and layer indices wherever possible, unless stated otherwise. The time step of the CABARET scheme is
denoted by $\tau$.

\begin{algorithm}
\caption{CABARET scheme for the deterministic QG system~\eqref{eq:pv}-\eqref{eq:bc13}}
\label{alg:DCABARET}
\begin{algorithmic}
\STATE{\underline{\bf Predictor}}
\STATE{$\displaystyle q^{n+\frac12}_{i+\frac12,j+\frac12}=q^n_{i+\frac12,j+\frac12}+
\frac{\Delta t}{2}\,F\left(q^n,u(q^n),v(q^n)\right)+\Delta t\, F_{\beta}\left(v^n,v^{n-1}\right)
+\Delta t\, F_{\rm visc}\left(\psi\left(q^{n+\frac12}\right)\right)$,}\\

\STATE{\quad $F\left(q^n_{ij},u(q^n),v(q^n)\right)=-\left(\Delta_x\left[(uq)^n_{i,j+\frac12}\right]+
\Delta_y\left[(vq)^n_{i+\frac12,j}\right]\right)$,}\\

\STATE{\quad$\displaystyle F_{\beta}\left(v^n,v^{n-1}\right)=\frac32R^n-\frac12R^{n-1},\quad
R^n=-\frac{\beta}{2}\left(v^n_{i+\frac12,j+1}+v^n_{i+\frac12,j}\right)$.}\\

\STATE{\quad The forcing term 
\vspace*{-0.125cm}
$$F_{\rm visc}\left(\psi\left(q^{n+\frac12}\right)\right)=\nu\left(\Delta^2\psi_l\right)^{n+\frac12}_{i+\frac12,j+\frac12}
-\delta_{2l}\,\mu\left(\Delta\psi_l\right)^{n+\frac12}_{i+\frac12,j+\frac12},\, l=1,2$$ 
\quad is added in the prediction step after the elliptic problem is solved.}\\

\STATE{\quad\bf Solve the elliptic system of equations with respect to $(\psi_1)^{n+\frac12}_{i+\frac12,j+\frac12}$ and $(\psi_2)^{n+\frac12}_{i+\frac12,j+\frac12}$}\\

\STATE{\qquad$\displaystyle (q_1)^{n+\frac12}_{i+\frac12,j+\frac12}=\left(\Delta\psi_1\right)^{n+\frac12}_{i+\frac12,j+\frac12}+s_1\left(\psi_{[21]}\right)^{n+\frac12}_{i+\frac12,j+\frac12},\quad
(q_2)^{n+\frac12}_{i+\frac12,j+\frac12}=\left(\Delta\psi_2\right)^{n+\frac12}_{i+\frac12,j+\frac12}+s_2\left(\psi_{[12]}\right)^{n+\frac12}_{i+\frac12,j+\frac12}\,.$}\\

\STATE{\quad\bf Calculate}

\STATE{\qquad$\displaystyle \psi^{n+\frac12}_{ij}=\frac14\left(\psi^{n+\frac12}_{i+\frac12,j+\frac12}+
\psi^{n+\frac12}_{i+\frac12,j-\frac12}+\psi^{n+\frac12}_{i-\frac12,j+\frac12}+\psi^{n+\frac12}_{i-\frac12,j-\frac12}\right).$}\\

\STATE{\quad\bf Update velocity components at the cell faces}
\STATE{\qquad $\displaystyle u^{n+\frac12}_{i,j+\frac12}=\Delta_y\left[\psi^{n+\frac12}_{ij}\right],\quad 
\left(v_l\right)^{n+\frac12}_{i+\frac12,j}=-\Delta_x\left[\psi^{n+\frac12}_{ij}\right].$}\\

\STATE{\bf\underline{Extrapolator}}
\STATE{\quad$\displaystyle u^{n+1}_{i,j+\frac12}=\frac32u^{n+\frac12}_{i,j+\frac12}-\frac12 u^{n-\frac12}_{i,j+\frac12}\,,\quad
v^{n+1}_{i,j+\frac12}=\frac32v^{n+\frac12}_{i,j+\frac12}-\frac12 v^{n-\frac12}_{i,j+\frac12}$.}\\

\STATE{\quad$\displaystyle q^{n+1}_{i+1,j+\frac12}=2q^{n+\frac12}_{i+\frac12,j+\frac12}-q^n_{i,j+\frac12}$\quad 
if $\displaystyle u^{n+1}_{i+1,j+\frac12}\ge0$; \quad$\displaystyle q^{n+1}_{i,j+\frac12}=2q^{n+\frac12}_{i+\frac12,j+\frac12}-q^n_{i+1,j+\frac12}$\quad 
if $\displaystyle u^{n+1}_{i,j+\frac12}<0$.}\\

\STATE{\quad$\displaystyle q^{n+1}_{i+\frac12,j+1}=2q^{n+\frac12}_{i+\frac12,j+\frac12}-q^n_{i+\frac12,j}$\quad 
if $\displaystyle v^{n+1}_{i+\frac12,j+1}\ge0$; \quad$\displaystyle q^{n+1}_{i+\frac12,j}=2q^{n+\frac12}_{i+\frac12,j+\frac12}-q^n_{i+\frac12,j+1}$\quad 
if $\displaystyle v^{n+1}_{i+\frac12,j}<0$.}\\

\STATE{\quad\bf Correction of the computed cell-face PV anomaly values}\\

\STATE{\qquad If $q^{n+1}_{i,j+\frac12}>M^{n+1}_{i,j+\frac12}\Rightarrow q^{n+1}_{i,j+\frac12}=M^{n+1}_{i,j+\frac12}$;\quad
If $q^{n+1}_{i,j+\frac12}<m^{n+1}_{i,j+\frac12}\Rightarrow q^{n+1}_{i,j+\frac12}=m^{n+1}_{i,j+\frac12}$.}\\

\STATE{\qquad If $q^{n+1}_{i+\frac12,j}>M^{n+1}_{i+\frac12,j}\Rightarrow q^{n+1}_{i+\frac12,j}=M^{n+1}_{i+\frac12,j}$;\quad
If $q^{n+1}_{i+\frac12,j}<m^{n+1}_{i+\frac12,j}\Rightarrow q^{n+1}_{i+\frac12,j}=m^{n+1}_{i+\frac12,j}$.}\\

\STATE{\qquad $\text{If}\,\, u^{n+1}_{i+1,j+\frac12}\ge0\quad
\left\{
\begin{array}{ll}
M^{n+1}_{i+1,j+\frac12}=\max\left(q^n_{i,j+\frac12} ,q^n_{i+\frac12,j+\frac12},q^n_{i+1,j+\frac12}\right)+\tau Q^n_{i+\frac12,j+\frac12},\\
m^{n+1}_{i+1,j+\frac12}=\min\left(q^n_{i,j+\frac12} ,q^n_{i+\frac12,j+\frac12},q^n_{i+1,j+\frac12}\right)+\tau Q^n_{i+\frac12,j+\frac12}.\\
\end{array}
\right.
$}\\

\STATE{}

\STATE{\qquad
$\text{If}\,\, u^{n+1}_{i,j+\frac12}<0\quad
\left\{
\begin{array}{ll}
M^{n+1}_{i,j+\frac12}=\max\left(q^n_{i,j+\frac12} ,q^n_{i+\frac12,j+\frac12},q^n_{i+1,j+\frac12}\right)+\tau Q^n_{i+\frac12,j+\frac12},\\
m^{n+1}_{i,j+\frac12}=\min\left(q^n_{i,j+\frac12} ,q^n_{i+\frac12,j+\frac12},q^n_{i+1,j+\frac12}\right)+\tau Q^n_{i+\frac12,j+\frac12}.\\
\end{array}
\right.
$}

\STATE{\qquad $\displaystyle Q^{n+\frac12}_{i+\frac12,j+\frac12}=\frac{q^{n+\frac12}_{i+\frac12,j+\frac12}-
q^n_{i+\frac12,j+\frac12}}{\Delta t/2}+\frac12\left((u_l)^{n+1}_{i+1,j+\frac12}+(u_l)^{n+1}_{i,j+\frac12}\right)
\Delta_x\left[q^n_{i,j+\frac12}\right]$.}\\

\STATE{\bf\underline{Corrector}}

\STATE{$\displaystyle q^{n+1}_{i+\frac12,j+\frac12}=q^{n+\frac12}_{i+\frac12,j+\frac12}+
\frac{\Delta t}{2}\,F\left(q^{n+1},u(q^{n+1}),v(q^{n+1})\right)$, where
$q^{n+1}$, $u(q^{n+1})$, $v(q^{n+1})$ are computed in the extrapolation step.}
\end{algorithmic}
\end{algorithm}

\subsection{Stochastic case\label{sec:SQG}}
The stochastic version of the QG equations~\eqref{eq:pv} is given by~\cite{holm2015variational}:
\begin{align}
\label{eq:SLTpv}
\begin{split}
\dd q_1+\left(\mathbf{u}_1\,dt +{\color{red} \sum\limits^K_{k=1} \xi^k_1 \circ dW^k_t}\right)\cdot\nabla q_1
&=\left(\nu\Delta^2\psi_1-\beta\frac{\partial \psi_1}{\partial x}\right)\,dt
,\\
\dd q_2+\left(\mathbf{u}_2\,dt +{\color{red} \sum\limits^K_{k=1} \xi^k_2 \circ dW^k_t}\right)\cdot\nabla q_2
&=\left(\nu\Delta^2\psi_2-\mu\Delta\psi_2-\beta\frac{\partial \psi_2}{\partial x}\right)\,dt.
\end{split}
\end{align}

The stochastic terms marked in red color is the only
difference from the deterministic QG model~\eqref{eq:pv}, all other equations remain the same as in the deterministic case.
However, the CABARET scheme in the stochastic case differs from the deterministic version
and therefore its use can only be justified if it is consistent with the stochastic QG model.
In other words, the CABARET scheme should be in the Stratonovich form.

\subsubsection{Numerical method\label{sec:2d_qg_method_stochastic}}
The CABARET scheme for the stochastic QG system~\eqref{eq:SLTpv} 
is given by Algorithm~\ref{alg:SCABARET} (with the stochastic terms highlighted in red).
To the best of our knowledge, the CABARET scheme has not been applied to the stochastic QG equations, and 
is used in this work for the first time.

\begin{algorithm}
\caption{The CABARET scheme for the stochastic QG system}
\label{alg:SCABARET}
\begin{algorithmic}
\STATE{\underline{\bf Predictor}}
\vspace*{-0.75cm}
\STATE{\begin{equation}
\begin{split}
q^{n+\frac12}_{i+\frac12,j+\frac12}=q^n_{i+\frac12,j+\frac12}&+
\frac{\Delta t}{2}\,F\left(q^n,u(q^n),v(q^n)\right)+\Delta t\, F_{\beta}\left(v^n,v^{n-1}\right)
+\Delta t\, F_{\rm visc}\left(\psi\left(q^{n+\frac12}\right)\right)\\
&+{\color{red}\sum\limits^{K}_{k=1}\left(G_k\left(q^n\right)+G_{k,\beta}\right)\frac{\Delta W_k}{2}},
\end{split}\label{eq:s_pred}
\end{equation}}

\STATE{\quad ${\color{red}G_k(q^n)=-\left(\Delta_x\left[(\xi^u_kq^n)_{i,j+\frac12}\right]+
\Delta_y\left[(\xi^v_kq^n)_{i+\frac12,j}\right]\right)}$,\qquad $\displaystyle {\color{red}G_{k,\beta}=3R^n-R^{n-1}},\,
{\color{red}R^n=-\frac{\beta}{2}\left((\xi^u_k)_{i+\frac12,j+1}+(\xi^v_k)_{i+\frac12,j}\right)}$.}\\

\STATE{\quad The forcing term 
\vspace*{-0.125cm}
$$F_{\rm visc}\left(\psi\left(q^{n+\frac12}\right)\right)=\nu\left(\Delta^2\psi_l\right)^{n+\frac12}_{i+\frac12,j+\frac12}-
\delta_{2l}\,\mu\left(\Delta\psi_l\right)^{n+\frac12}_{i+\frac12,j+\frac12},\, l=1,2$$ 
\quad is added in the prediction step after the elliptic problem is solved.}\\

\STATE{\quad\bf Solve the elliptic system of equations with respect to $(\psi_1)^{n+\frac12}_{i+\frac12,j+\frac12}$ and $(\psi_2)^{n+\frac12}_{i+\frac12,j+\frac12}$}\\

\STATE{\qquad$\displaystyle (q_1)^{n+\frac12}_{i+\frac12,j+\frac12}=\left(\Delta\psi_1\right)^{n+\frac12}_{i+\frac12,j+\frac12}+s_1\left(\psi_{[21]}\right)^{n+\frac12}_{i+\frac12,j+\frac12},\quad
(q_2)^{n+\frac12}_{i+\frac12,j+\frac12}=\left(\Delta\psi_2\right)^{n+\frac12}_{i+\frac12,j+\frac12}+s_2\left(\psi_{[12]}\right)^{n+\frac12}_{i+\frac12,j+\frac12}\,.$}\\

\STATE{\quad\bf Calculate}

\STATE{\qquad$\displaystyle \psi^{n+\frac12}_{ij}=\frac14\left(\psi^{n+\frac12}_{i+\frac12,j+\frac12}+
\psi^{n+\frac12}_{i+\frac12,j-\frac12}+\psi^{n+\frac12}_{i-\frac12,j+\frac12}+\psi^{n+\frac12}_{i-\frac12,j-\frac12}\right).$}\\

\STATE{\quad\bf Update velocity components at the cell faces}
\STATE{\qquad $\displaystyle u^{n+\frac12}_{i,j+\frac12}=\Delta_y\left[\psi^{n+\frac12}_{ij}\right],\quad 
\left(v_l\right)^{n+\frac12}_{i+\frac12,j}=-\Delta_x\left[\psi^{n+\frac12}_{ij}\right].$}\\

\STATE{\bf\underline{Extrapolator}}
\STATE{\quad$\displaystyle u^{n+1}_{i,j+\frac12}=\frac32u^{n+\frac12}_{i,j+\frac12}-\frac12 u^{n-\frac12}_{i,j+\frac12}\,,\quad
v^{n+1}_{i,j+\frac12}=\frac32v^{n+\frac12}_{i,j+\frac12}-\frac12 v^{n-\frac12}_{i,j+\frac12}$.}\\

\STATE{\quad$\displaystyle q^{n+1}_{i+1,j+\frac12}=2q^{n+\frac12}_{i+\frac12,j+\frac12}-q^n_{i,j+\frac12}$\quad 
if $\displaystyle u^{n+1}_{i+1,j+\frac12}+{\color{red}\Xi^u_{i+1,j+\frac12}}\ge0$; 
\quad$\displaystyle q^{n+1}_{i,j+\frac12}=2q^{n+\frac12}_{i+\frac12,j+\frac12}-q^n_{i+1,j+\frac12}$\quad 
if $\displaystyle u^{n+1}_{i,j+\frac12}+{\color{red}\Xi^u_{i,j+\frac12}}<0$.}\\

\STATE{\quad$\displaystyle q^{n+1}_{i+\frac12,j+1}=2q^{n+\frac12}_{i+\frac12,j+\frac12}-q^n_{i+\frac12,j}$\quad 
if $\displaystyle v^{n+1}_{i+\frac12,j+1}+{\color{red}\Xi^v_{i+\frac12,j+1}}\ge0$; \quad$\displaystyle q^{n+1}_{i+\frac12,j}=2q^{n+\frac12}_{i+\frac12,j+\frac12}-q^n_{i+\frac12,j+1}$\quad 
if $\displaystyle v^{n+1}_{i+\frac12,j}+{\color{red}\Xi^v_{i+\frac12,j}}<0$.}\\

\STATE{\quad ${\color{red}\Xi^u_{ij}=\sum\limits^{K}_{k=1}\left(\xi^u_k\right)_{ij}\Delta W_k},\quad 
{\color{red}\Xi^v_{ij}=\sum\limits^{K}_{k=1}\left(\xi^v_k\right)_{ij}\Delta W_k}$.}

\STATE{\quad\bf Correction of the computed cell-face PV anomaly values}
\STATE{\qquad If $q^{n+1}_{i,j+\frac12}>M^{n+1}_{i,j+\frac12}\Rightarrow q^{n+1}_{i,j+\frac12}=M^{n+1}_{i,j+\frac12}$;\quad
If $q^{n+1}_{i,j+\frac12}<m^{n+1}_{i,j+\frac12}\Rightarrow q^{n+1}_{i,j+\frac12}=m^{n+1}_{i,j+\frac12}$.}\\

\STATE{\qquad If $q^{n+1}_{i+\frac12,j}>M^{n+1}_{i+\frac12,j}\Rightarrow q^{n+1}_{i+\frac12,j}=M^{n+1}_{i+\frac12,j}$;\quad
If $q^{n+1}_{i+\frac12,j}<m^{n+1}_{i+\frac12,j}\Rightarrow q^{n+1}_{i+\frac12,j}=m^{n+1}_{i+\frac12,j}$.}\\

\STATE{\qquad $\text{If}\,\, u^{n+1}_{i+1,j+\frac12}+{\color{red}\Xi^u_{i+1,j+\frac12}}\ge0\quad
\left\{
\begin{array}{ll}
M^{n+1}_{i+1,j+\frac12}=\max\left(q^n_{i,j+\frac12} ,q^n_{i+\frac12,j+\frac12},q^n_{i+1,j+\frac12}\right)+\tau Q^n_{i+\frac12,j+\frac12},\\
m^{n+1}_{i+1,j+\frac12}=\min\left(q^n_{i,j+\frac12} ,q^n_{i+\frac12,j+\frac12},q^n_{i+1,j+\frac12}\right)+\tau Q^n_{i+\frac12,j+\frac12}.\\
\end{array}
\right.
$}\\

\STATE{}

\STATE{\qquad
$\text{If}\,\, u^{n+1}_{i,j+\frac12}+{\color{red}\Xi^u_{i,j+\frac12}}<0\quad
\left\{
\begin{array}{ll}
M^{n+1}_{i,j+\frac12}=\max\left(q^n_{i,j+\frac12} ,q^n_{i+\frac12,j+\frac12},q^n_{i+1,j+\frac12}\right)+\tau Q^n_{i+\frac12,j+\frac12},\\
m^{n+1}_{i,j+\frac12}=\min\left(q^n_{i,j+\frac12} ,q^n_{i+\frac12,j+\frac12},q^n_{i+1,j+\frac12}\right)+\tau Q^n_{i+\frac12,j+\frac12}.\\
\end{array}
\right.
$}

\STATE{\qquad $\displaystyle Q^{n+\frac12}_{i+\frac12,j+\frac12}=\frac{q^{n+\frac12}_{i+\frac12,j+\frac12}-
q^n_{i+\frac12,j+\frac12}}{\Delta t/2}+\frac12\left(\left(u^{n+1}_{i+1,j+\frac12}+{\color{red}\Xi^u_{i+1,j+\frac12}}\right)+\left(u^{n+1}_{i,j+\frac12}+{\color{red}\Xi^u_{i,j+\frac12}}\right)\right)
\Delta_x\left[q^n_{i,j+\frac12}\right]$.}\\

\STATE{\bf\underline{Corrector}}
\vspace*{-0.5cm}
\STATE{\begin{equation} q^{n+1}_{i+\frac12,j+\frac12}=q^{n+\frac12}_{i+\frac12,j+\frac12}+
\frac{\Delta t}{2}\,F\left(q^{n+1},u(q^{n+1}),v(q^{n+1})\right)+
{\color{red}\sum\limits^{K}_{k=1}\left(G_k\left(q^{n+1}\right)+G_{k,\beta}\right)\frac{\Delta W_k}{2}}\,,\label{eq:s_corr}\end{equation}
where $q^{n+1}$, $u(q^{n+1})$, $v(q^{n+1})$ are computed in the extrapolation step.}

\end{algorithmic}
\end{algorithm}

In order to show that the CABARET scheme is consistent with the stochastic QG model, 
we rewrite the scheme as the improved Euler method (also known as Heun's method)~\cite{KloedenPlaten1999},
\begin{equation*}
\begin{split}
x^*= & x^n+\Delta t f(x^n) + {\color{red}\Delta W g(x^n)},\\
x^{n+1}= & x^n+\frac{\Delta t}{2} (f(x^n)+f(x^*)) + {\color{red}\frac{\Delta W}{2} (g(x^n)+g(x^*))},
\end{split}
\label{eq:Heun}
\end{equation*}
which solves stochastic differential equations (SDEs) in the form of Stratonovich.

In doing so, we omit the space indices for the potential vorticity anomaly $q$ to emphasize the functional dependence on $q$, and introduce an extra variable 
$$q^*=2q^{n+\frac12}-q^n,$$ 
which allows to recast~\eqref{eq:s_pred} and~\eqref{eq:s_corr} (see Algorithm 2) in the form
\begin{subequations}
\begin{equation}
q^*=q^n+
\Delta t\,F\left(q^n,u(q^n),v(q^n)\right)+2\Delta t\, F_{\beta}\left(v^n,v^{n-1}\right)
+2\Delta t\, F_{\rm visc}\left(\psi\left(q^{n+\frac12}\right)\right)\\
+{\color{red}\sum\limits^{K}_{k=1}\left(G_k(q^n)+G_{k,\beta}\right){\Delta W_k}},\\
\label{eq:s_pred2}
\end{equation}
\begin{equation}
q^{n+1}=\frac{q^*+q^n}{2}+
\frac{\Delta t}{2}\,F\left(q^*,u(q^*),v(q^*)\right)+{\color{red}\sum\limits^{K}_{k=1}\left(G_k\left(q^*\right)+G_{k,\beta}\right)\frac{\Delta W_k}{2}}.
\label{eq:s_corr2}
\end{equation}
\end{subequations}

Substitution of~\eqref{eq:s_pred2} into~\eqref{eq:s_corr2} and~\eqref{eq:s_pred} into the forcing term $F_{\rm visc}\left(\psi\left(q^{n+\frac12}\right)\right)$ leads to
\begin{equation}
\begin{split}
q^{n+1}=q^n
&+\frac{\Delta t}{2}\,\left[F(q^n,u(q^n),v(q^n))+F(q^n+{\color{red}O_1(\Delta W_k)},u(q^n+{\color{red}O_1(\Delta W_k)}),v(q^n+{\color{red}O_1(\Delta W_k)}))\right]\\
&+\Delta t\,\left[F_{\beta}\left(v^n,v^{n-1}\right)+F_{\rm visc}\left(\psi\left(q^n+{\color{red}O_2(\Delta W_k)}\right)\right)\right]\\
&+{\color{red}\sum\limits^{K}_{k=1}\left(G_k\left(q^n\right)+G_{k,\beta}\right)\frac{\Delta W_k}{2}}
+{\color{red}\sum\limits^{K}_{k=1}\left(G_k\left(q^n+O_1(\Delta W_k)\right)+G_{k,\beta}\right)\frac{\Delta W_k}{2}},
\end{split}
\label{eq:s_pred_into_corr}
\end{equation}
where
\begin{equation}
\begin{split}
O_1(\Delta W_k):=&\Delta t\,F\left(q^n,u(q^n),v(q^n)\right)+2\Delta t\, F_{\beta}\left(v^n,v^{n-1}\right)\\
+&2\Delta t\,F_{\rm visc}\left(\psi\left(q^n+{\color{red}O_2(\Delta W_k)}\right)\right)
{\color{red}+\sum\limits^{K}_{k=1}\left(G_k\left(q^n\right)+G_{k,\beta}\right)\Delta W_k}\,,
\end{split}
\nonumber
\end{equation}
and
\begin{equation}
O_2(\Delta W_k):=\frac{\Delta t}{2}\,F\left(q^n,u(q^n),v(q^n)\right)
+\Delta t\, F_{\beta}\left(v^n,v^{n-1}\right)+{\color{red}\sum\limits^{K}_{k=1}\left(G_k\left(q^n\right)+G_{k,\beta}\right)\frac{\Delta W_k}{2}}\,.
\nonumber
\end{equation}

Retaining the terms up to order $\Delta t$ in~\eqref{eq:s_pred_into_corr} we get
\begin{equation}
\begin{split}
q^{n+1}
=q^n
&+\Delta t\,\left[F(q^n,u(q^n),v(q^n))+F_{\beta}\left(v^n,v^{n-1}\right)+F_{\rm visc}\left(\psi(q^n)\right)\right]\\
&+{\color{red}\sum\limits^{K}_{k=1}\left(G_k\left(q^n\right)+G_{k,\beta}\right)\Delta W_k+
{\color{red}\sum\limits^{K}_{k_1=1}\sum\limits^{K}_{k_2=1}G_{k_1}\left(G_{k_2}(q^n)+G_{k_2,\beta}\right)\frac{\Delta W_{k_1}\Delta W_{k_2}}{2}}}+H.O.T.\,,\\
\end{split}
\label{eq:s_pred_into_corr2}
\end{equation}
where $G_{\beta}$ does not depend on $q^n$, and $H.O.T.$ denotes higher order terms. 
Thus we have shown that the CABARET scheme is in Stratonovich form up to order $(\Delta t)^{3/2}$.

\section{Consistency in time of the stochastic CABARET scheme\label{sec:2d_qg_consistency}}
In this section we prove that the stochastic CABARET scheme~\eqref{eq:s_pred_into_corr2} is consistent with the stochastic QG equation~\eqref{eq:SLTpv}
in the mean square sense in time, since its consistency in space is guaranteed by its second order approximation~\cite{Karabasov_et_al2009}.
We consider a Stratonovich process $q=q(t,\mathbf{x})$, $\mathbf{x}=(x,y)$ satisfying the SPDE
\begin{equation*}
\dd q=a_t\diff t+{\color{red}\sum\limits^{K}_{i=1}b_{i,t}\circ\diff W_{i,t}},\quad a_t:=F(q^n,u(q^n),v(q^n))+F_{\beta}+F_{\rm visc}\left(\psi(q^n)\right),\quad b_{i,t}:=G_i(q^n)+G_{i,\beta},
\label{eq:Stratonovich1} 
\end{equation*}
and rewrite it in the It\^{o} form
\begin{equation*}
\dd q=a_t\diff t+{\color{red}\sum\limits^{K}_{i=1}b_{i,t}\diff W_{i,t}}+\frac12\sum\limits^{K}_{i=1}b_{i,t}(b_{i,t})\diff t\,,
\label{eq:Ito1_} 
\end{equation*}
or alternatively
\begin{equation}
\dd q=q_d\diff t+{\color{red}\sum\limits^{K}_{i=1}q^i_{s,t}\diff W_{i,t}}
\label{eq:Ito1} 
\end{equation}
with the stochastic and deterministic parts defined as $\displaystyle q_d:=a_t+\frac12\sum\limits^{K}_{i=1}b_{i,t}(b_{i,t})$ and $\displaystyle q^i_{s,t}:=b_{i,t}$, respectively.

We define consistency for SPDE~\eqref{eq:Ito1} as follows
\begin{definition}
We say that a discrete time-space approximation $q^n=q^n_d+q^n_s$ of $q=q_d+q_s$ with 
the time step $\Delta t$ and space steps $\Delta\mathbf{x}=(\Delta x_1,\Delta x_2,\ldots,\Delta x_d)$ 
is consistent in mean square of order $\alpha>1$ and $\beta>1$ in time and space with respect to~\eqref{eq:Ito1} if there exists a nonnegative function 
$c=c((\Delta t)^\alpha,(\Delta\mathbf{x})^\beta)$ with 
$\lim\limits_{\substack{\Delta t\rightarrow0\\ \Delta\mathbf{x}\rightarrow0}}c((\Delta t)^\alpha,(\Delta\mathbf{x})^\beta)=0$ such that
\begin{equation*}
\mathbb{E}\left[\left\|q_s- q^n_s \right\|^2_{L^2(\Omega)}\right]\le c((\Delta t)^\alpha,(\Delta\mathbf{x})^\beta)\,,\qquad 
\mathbb{E}\left[\left\|q_d- q^n_d \right\|^2_{L^2(\Omega)}\right]\le c((\Delta t)^\alpha,(\Delta\mathbf{x})^\beta)
\label{eq:consitency_spde1} 
\end{equation*}
for all fixed values $q^n$, time $n=0,1,2,\ldots$ and space indices.
\end{definition}

Since our focus in this section is on consistency in time, we have to prove that the following estimation holds:
\begin{equation}
\mathbb{E}\left[\left\|q_s- q^n_s \right\|^2_{L^2(\Omega)}\right]\le c((\Delta t)^\alpha)\,.
\label{eq:consitency_spde2} 
\end{equation}

\begin{theorem}
Assuming that there exists a constant $\widetilde{C}>0$ such that the following assumptions hold
\begin{enumerate}[label={\bf A\arabic*.}]
 \item $\mathbb{E}\left[\left\|a_r-a_s\right\|_{L^2(\Omega)}\right]\le \widetilde{C}\sqrt{r-s}$,
 \item $\mathbb{E}\left[\left\|\sum\limits^{K}_{i=1}(b_{i,r}-b_{i,s})\right\|_{L^2(\Omega)}\right]\le \widetilde{C}\sqrt{r-s}$,
 \item $\mathbb{E}\left[\left\|\sum\limits^{K}_{i=1}\sum\limits^{K}_{j=1}b_{i,s}(b_{j,s})\right\|_{L^2(\Omega)}\right]\le \widetilde{C}$, for $i,j=1,2,\ldots,m$,
 \item $\mathbb{E}\left[\left\|\sum\limits^{K}_{i=1}(b_{i,r}(b_{i,r})-b_{i,s}(b_{i,s}))\right\|_{L^2(\Omega)}\right]\le \widetilde{C}\sqrt{r-s}$, 
 \item $\mathbb{E}\left[\left\|{H.O.T.}\right\|\right]\le \widetilde{C}(r-s)^{3/2}$,
\end{enumerate}
with $\left|r-s\right|\le \Delta t$, the stochastic CABARET scheme~\eqref{eq:s_pred_into_corr2} is consistent in mean square with $c(\Delta t)=(\Delta t)^2$.
\end{theorem}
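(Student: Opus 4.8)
The plan is to reduce the claim to a single-step (local truncation) estimate. Since consistency in space is already supplied by the second-order CABARET approximation, it suffices to prove \eqref{eq:consitency_spde2} on one interval $[t_n,t_{n+1}]$ of length $\Delta t$, with $q^n$ fixed and the exact solution of the It\^o form \eqref{eq:Ito1} started from $q(t_n)=q^n$, so that $b_{i,t_n}=G_i(q^n)+G_{i,\beta}$. The first step is to read off the stochastic part of the scheme from \eqref{eq:s_pred_into_corr2}: in its double sum I split $\Delta W_{k_1}\Delta W_{k_2}=\delta_{k_1k_2}\Delta t+\big(\Delta W_{k_1}\Delta W_{k_2}-\delta_{k_1k_2}\Delta t\big)$, and the diagonal piece $\tfrac12\sum_k G_k\!\big(G_k(q^n)+G_{k,\beta}\big)\Delta t=\tfrac12\sum_k b_k(b_k)\,\Delta t$ is exactly the It\^o--Stratonovich correction already carried by $q_d$ in \eqref{eq:Ito1}, so it is assigned to the deterministic part $q^n_d$. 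One is then left with
\[
q^n_s=\sum_{k=1}^{K} b_k(q^n)\,\Delta W_k+\tfrac12\sum_{k_1,k_2=1}^{K} G_{k_1}\!\big(b_{k_2}(q^n)\big)\big(\Delta W_{k_1}\Delta W_{k_2}-\delta_{k_1k_2}\Delta t\big)+(H.O.T.)_s .
\]

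Next I expand the exact stochastic increment $q_s=\sum_{i=1}^{K}\int_{t_n}^{t_{n+1}} b_{i,r}\,\diff W_{i,r}$ about $t_n$: $q_s=\sum_{i} b_i(q^n)\,\Delta W_i+\sum_{i}\int_{t_n}^{t_{n+1}}\big(b_{i,r}-b_{i,t_n}\big)\,\diff W_{i,r}$. The leading term is reproduced verbatim by $q^n_s$, hence
\[
q_s-q^n_s=\sum_{i=1}^{K}\int_{t_n}^{t_{n+1}}\big(b_{i,r}-b_{i,t_n}\big)\,\diff W_{i,r}-\tfrac12\sum_{k_1,k_2=1}^{K} G_{k_1}\!\big(b_{k_2}(q^n)\big)\big(\Delta W_{k_1}\Delta W_{k_2}-\delta_{k_1k_2}\Delta t\big)-(H.O.T.)_s .
\]
(The It\^o--Taylor expansion of the first sum also generates iterated-integral corrections $\sum_{j} G_j\!\big(b_i(q^n)\big)\int_{t_n}^{t_{n+1}}\!\!\int_{t_n}^{r}\diff W_j\,\diff W_i$ that partly cancel the second sum, but I would not chase this, since both are $O(\Delta t)$ in $L^2(\Omega)$ and so already below the target order.)

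The estimate is then routine. Taking the $L^2(\Omega)$ spatial norm, squaring and taking expectations, I treat the three contributions separately. For the martingale term, the It\^o isometry and the mutual independence of $W_1,\dots,W_K$ give $\mathbb{E}\big\|\sum_i\int_{t_n}^{t_{n+1}}(b_{i,r}-b_{i,t_n})\,\diff W_{i,r}\big\|_{L^2(\Omega)}^2=\sum_i\int_{t_n}^{t_{n+1}}\mathbb{E}\|b_{i,r}-b_{i,t_n}\|_{L^2(\Omega)}^2\,\diff r$, which by a second-moment version of \textbf{A2} is $\le K\widetilde{C}^2\!\int_{t_n}^{t_{n+1}}(r-t_n)\,\diff r=\tfrac12 K\widetilde{C}^2(\Delta t)^2$; this is the dominant contribution, and it is exactly of order $(\Delta t)^2$. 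The double-sum fluctuation term has mean zero and, by \textbf{A3}, second moment $O((\Delta t)^2)$, while by \textbf{A5} one gets $\mathbb{E}\|H.O.T.\|^2=O((\Delta t)^3)$. Summing gives $\mathbb{E}\|q_s-q^n_s\|_{L^2(\Omega)}^2\le c(\Delta t)$ with $c(\Delta t)=(\Delta t)^2$ (up to a fixed constant that can be absorbed). The deterministic part is analogous and easier: the leading $O(\Delta t)$ drift $\Delta t\,q_{d,t_n}$ is matched, and the remainder is the sum of $\int_{t_n}^{t_{n+1}}(q_{d,r}-q_{d,t_n})\,\diff r$, which by \textbf{A1} and \textbf{A4} is $O((\Delta t)^{3})$ in mean square, and of the Heun quadrature error, which is of the same order.

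The step I expect to be the main obstacle is the passage from the \emph{first}-moment form in which \textbf{A1}--\textbf{A5} are stated to the \emph{second} moments forced by the It\^o isometry (for instance $\mathbb{E}\|b_{i,r}-b_{i,t_n}\|_{L^2(\Omega)}^2$ rather than $\mathbb{E}\|b_{i,r}-b_{i,t_n}\|_{L^2(\Omega)}$): one must either interpret \textbf{A1}--\textbf{A5} as bounds on the corresponding mean-square quantities -- the natural reading here, since the correlations $\xi^k_i$ are smooth prescribed fields and the advection operators $G_k$ are linear -- or supplement them with a pathwise spatial $L^\infty$ bound on the coefficients so that $\mathbb{E}\|\cdot\|_{L^2(\Omega)}^2\le C\,\mathbb{E}\|\cdot\|_{L^2(\Omega)}$. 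A secondary technical point is the bookkeeping of the $\Delta W_{k_1}\Delta W_{k_2}$ terms: Heun's symmetrised double increments do not reproduce the exact iterated It\^o integrals once the noise vector fields fail to commute (the L\'evy areas are discarded), but this mismatch is only $O(\Delta t)$ in $L^2(\Omega)$ and hence invisible at the $(\Delta t)^2$ level at which consistency is claimed.
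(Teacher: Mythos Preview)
Your proof is essentially the paper's: the same decomposition into a martingale remainder (their term $B$), the $\Delta W_{k_1}\Delta W_{k_2}-\delta_{k_1k_2}\Delta t$ fluctuation (their $C_2+C_3$), and the drift and It\^o--Stratonovich correction remainders (their $A$ and $C_1$), each estimated via the It\^o isometry, Cauchy--Schwarz, and \textbf{A1}--\textbf{A5} to land on the $(\Delta t)^2$ bound. Your caveat about needing second-moment versions of \textbf{A1}--\textbf{A5} is well taken and applies equally to the paper's own argument, which silently passes from $\mathbb{E}\|a_r-a_s\|\le\widetilde C\sqrt{r-s}$ to $\mathbb{E}\|a_r-a_s\|^2\le\widetilde C^2(r-s)$ in its estimates of $A$, $B$, and $C_1$.
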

\begin{proof}
Integration of~\eqref{eq:Ito1}  with respect to time over the interval $[s,t]$ gives
\begin{equation}
q_t=q_s+\int\limits^t_s a_r\,dr+{\color{red}\int\limits^t_s \sum\limits^{K}_{i=1}b_{i,r}\, dW_{i,r}}+\frac12\int\limits^t_s 
\sum\limits^{K}_{i=1}b_{i,r}(b_{i,r}) dr\,.
\label{eq:int_Ito1}
\end{equation}

\noindent
Substitution of~\eqref{eq:s_pred_into_corr2} and~\eqref{eq:int_Ito1} into~\eqref{eq:consitency_spde2} leads to
\begin{equation}
\begin{split}
&\mathbb{E}\left[\left\|
\int\limits^t_s a_r\,dr+{\color{red}\int\limits^t_s \sum\limits^{K}_{i=1}b_{i,r}\, dW_{i,r}}+\frac12\int\limits^t_s \sum\limits^{K}_{i=1} b_{i,r}(b_{i,r}) dr\right.\right. \\
&-\left.\left.
\left(a_t\Delta t
+{\color{red}\sum\limits^{K}_{i=1} b_{i,s}\, \Delta W_{i,s}}
+{\color{red}\frac12\sum\limits^{K}_{i,j=1} b_{i,s}(b_{j,s}) \Delta W_{i,s}\Delta W_{j,s}}\right)+{H.O.T.}\right\|^2_{L^2(\Omega)}
\right]\le c(\Delta t).
\end{split}
\label{eq:con_spde3}
\end{equation}
By combining the terms in~\eqref{eq:con_spde3}, we get 
\begin{equation}
\mathbb{E}\left[\left\|A+B+C\right\|^2_{L^2(\Omega)}\right]\le c(\Delta t),
\label{eq:con_spde4}
\end{equation}
where
\begin{equation*}
A:=\int\limits^t_s (a_r-a_s),dr\quad B:={\color{red}\int\limits^t_s \sum\limits^{K}_{i=1}(b_{i,r}-b_{i,s})\, dW_{i,r}},\quad 
C:=C_1-C_2-C_3,
\end{equation*}
with 
\begin{equation*}
C_1:=\frac12\int\limits^t_s \sum\limits^{K}_{i=1}(b_{i,r}(b_{i,r})-b_{i,s}(b_{i,s}))\, dr,\quad
C_2:=\frac12\sum\limits^{K}_{i=1}b_{i,s}(b_{i,s})({\color{red}(\Delta W_i)^2}-\Delta t),\quad
C_3:={\color{red}\frac12\sum\limits^{K}_{i\ne j}b_{i,s}(b_{j,s})\Delta W_i\Delta W_j}\,.
\end{equation*}

\noindent
Applying the triangle and Young's inequalities to~\eqref{eq:con_spde4} we arrive at 
\begin{equation*}
\mathbb{E}\left[\left\|A+B+C\right\|^2_{L^2(\Omega)}\right]\le 3\mathbb{E}\left[\left\|A\right\|^2_{L^2(\Omega)}+
\left\|B\right\|^2_{L^2(\Omega)}+\left\|C\right\|^2_{L^2(\Omega)}+\widetilde{C}^2(\Delta t)^3\right].
\label{eq:con_spde5}
\end{equation*}

\noindent
Using {\bf A2}, the Cauchy--Schwarz inequality and {\bf A1}, we estimate the first term as
\begin{equation*}
\mathbb{E}\left[\left\|A\right\|^2_{L^2(\Omega)}\right] \le\Delta t\, \mathbb{E}\left[ \int\limits^t_s \left\|a_r-a_s\right\|^2_{L^2(\Omega)}\, dr \right] \le \frac{\widetilde{C}^2}{2}(\Delta t)^3.
\label{eq:A_estimate1} 
\end{equation*}

\noindent
Estimation of the second term is given by
\begin{equation*}
\begin{aligned}
 \mathbb{E}\left[\left\|B\right\|^2_{L^2(\Omega)}\right]=& \int\limits_{\Omega}\mathbb{E}\left [{\color{red}\left(\int\limits^t_s\sum\limits^{K}_{i=1}(b_{i,r}-b_{i,s})\, dW_{i,r}\right)^2} \right]\, d\Omega  &&\text{(using the It\^{o} isometry)}\\
=& \mathbb{E}\left [\int\limits_{\Omega}\int\limits^t_s\left(\sum\limits^{K}_{i=1}(b_{i,r}-b_{i,s})\right)^2\, dr\, d\Omega \right]  && \text{(the Cauchy--Schwarz inequality leads to)}\\
\le& \Delta t\, \mathbb{E}\left [\int\limits^t_s\left\|\sum\limits^{K}_{i=1}(b_{i,r}-b_{i,s})\right\|^2_{L^2(\Omega)}\, dr \right]  \le \frac{\widetilde{C}^2}{2}(\Delta t)^3 &&  \text{(using {\bf A2}).}\\
\end{aligned}
\label{eq:B_estimate1} 
\end{equation*}

\noindent
To estimate the term $C$ in \eqref{eq:con_spde4}, we use the triangle inequality to get
\begin{equation*}
\mathbb{E}\left[\left\|C\right\|^2_{L^2(\Omega)}\right] \le \mathbb{E}\left [ \left\|C_1\right\|^2_{L^2(\Omega)}+\left\|C_2\right\|^2_{L^2(\Omega)}+\left\|C_3\right\|^2_{L^2(\Omega)} \right]\,,
\label{eq:C_estimate1} 
\end{equation*}
and then separately estimate each term on the right hand side. 

Applying the Cauchy--Schwarz inequality and {\bf A4} to $C_1$, we get the following estimation
\begin{equation*}
\mathbb{E}\left[ \left\|C_1\right\|^2_{L^2(\Omega)} \right] \le\frac{\Delta t}{2}\mathbb{E}\left[\int\limits_{\Omega} 
\left\| \sum\limits^{K}_{i=1}(b_{i,r}(b_{i,r})-b_{i,s}(b_{i,s})) \right\|^2_{L^2(\Omega)}\, d\Omega \right] \le \frac{\widetilde{C}^2}{8}(\Delta t)^3.
\label{eq:C1_estimate1} 
\end{equation*}

\noindent
The term $C_2$ is estimated as
\begin{equation*}
\begin{aligned}
\mathbb{E}\left[ \left\|C_2\right\|^2_{L^2(\Omega)} \right]=& \int\limits_{\Omega} \mathbb{E}\left[\left(\frac12 \sum\limits^{K}_{i=1}(b_{i,s}(b_{i,s}))\left({\color{red}(\Delta W_i)^2}-\Delta t\right)\right)^2 \right]\, d\Omega && \\
= & \frac14 \int\limits_{\Omega} \sum\limits^{K}_{i=1}(b_{i,s}(b_{i,s}))^2\, \mathbb{E}\left[ {\color{red}(\Delta W_i)^4}-2{\color{red}(\Delta W_i)^2}\Delta t+(\Delta t)^2 \right]\,d\Omega && \\
= & \frac{(\Delta t)^2}{2} \left\| \sum\limits^{K}_{i=1}(b_{i,s}(b_{i,s}))^2\right\|^2_{L^2(\Omega)} \le \frac{\widetilde{C}^2}{2}(\Delta t)^2  &&  \text{(using {\bf A3}).}\\
\end{aligned}
\label{eq:C2_estimate1} 
\end{equation*}

\noindent
Using {\bf A3} for $C_3$ leads to
\begin{equation*}
\mathbb{E}\left[ \left\|C_3\right\|^2_{L^2(\Omega)} \right]= \frac14 \int\limits_{\Omega} \sum\limits^{K}_{i\ne j}(b_{i,s}(b_{i,s}))^2\, \mathbb{E}\left[ {\color{red}(\Delta W_i)^2}\right] \mathbb{E}\left[ {\color{red}(\Delta W_j)^2}\right]\, d\Omega 
= \frac{(\Delta t)^2}{4} \left\|\sum\limits^{K}_{i\ne j}(b_{i,s}(b_{i,s}))\right\|^2_{L^2(\Omega)} \le  \frac{\widetilde{C}^2}{4}(\Delta t)^2. 
\label{eq:C3_estimate1} 
\end{equation*}

\noindent
Finally, we arrive at the following estimation
\begin{equation*}
\mathbb{E}\left[\left\|A+B+C\right\|^2_{L^2(\Omega)}\right]\le C^*\left((\Delta t)^2+(\Delta t)^3\right)\le C^*(\Delta t)^2,\quad C^*>0,
\label{eq:con_spde4_2}
\end{equation*}
which proves the theorem.
\end{proof}
\begin{remark}
Conditions {\bf A1-A5} are satisfied and SPDE~\eqref{eq:Ito1} is well-posed for sufficiently large p for all $T>0$ 
if the stochastic QG equation~\eqref{eq:SLTpv} has a solution in $W^{2p,2}$ 
such that 
$\mathbb{E}\left[\sup\limits_{t\in [0,T]}||q_i||^2_{W^{2p,2}}\right]<\infty$, $i=1,2$.   
\end{remark}

\section{Data assimilation methods}
\label{sec:da_methods}

We find it useful to describe the framework and the data assimilation methodology through the language of nonlinear filtering. For this purpose, let us consider a probability space $(\Omega,\mathcal{F},\mathbb{P})$ on which we define a pair of processes $Z$ and $Y$. The process $Z$ is normally called the signal process, and the process $Y$ models the observational data and is called the observation process. In the context of this work the signal process, also called the true state, is given by the solution of the deterministic QG equation~\eqref{eq:pv} computed on a fine grid $G_f=2049\times 1025$ 
and projected onto a coarse grid, denoted by $G_s$ (details below), by spatially averaging the high-resolution stream 
function $\psi^f$ over the corresponding coarse grid cells.

The  filtering problem consists in computing/approximating the posterior distribution of the signal $Z_t$, denoted by $\pi_t$ given the observations $Y_s$, $s\in [0,t]$. In our context, the observations consist of noisy measurements of the true state recorded at discrete times (every 2 or 4 hours) and
are taken at locations (called weather stations) on a data grid $G_d$ defined below. The data assimilation is performed at these times, which we call the assimilation times.

The most basic particle filter, called the bootstrap particle filter (see section  \ref{sec:bootstrap} for details) uses ensembles of, say, $N$ particles that evolve according to the law of the signal between assimilation times. At the data assimilation times, each particle is weighed according to the likelihood of its position given the new data. A new set of particles is then obtained by sampling $N$ times (with replacement)  from the set of weighted particles. The end result is that particles with high likelihoods (close to the true trajectory) are kept and possibly multiplied, and particles with small likelihoods (that are away from the true trajectory) are eliminated. As a result, the ensemble of particles should stay closer to the true trajectory when compared to the particles that evolve according to the signal distribution.  
The bootstrap particle filter described below uses multiple copies of the signal which, in our case, would require the resolution of the deterministic QG equation~\eqref{eq:pv} on a fine grid $G_f=2049\times1025$. Each run of the particle filter at such a high
resolution is very expensive computationally (one data assimilation step takes approximately 15 minutes). Taking into account that we 
assimilate data over thousands steps, the computational resources needed are too large.
For this reason we replace the true state with a proxy. We use a process $X$ defined on the same probability space whose sample paths are a lot cheaper to simulate. In our case, the process $X$ will be    
the solution of the stochastic QG equation~\eqref{eq:SLTpv} computed
on the (signal) grid $G_s$ (each run of the process $X$ requires around 20 seconds). This replacement is a form of model reduction: we reduce the dimension of the underlying state from $G_f=2049\times1025$ to $G_s=129\times65$.
This model reduction is critical to successful implementation of the data assimilation procedure. It is also rigorously justified as we explain now.

The posterior distribution $\pi_t$ depends continuously on two constituents: the (prior) distribution of the signal and the observation data. That means that if we replace the original signal distribution  with a proxy distribution, we will still obtain a good approximation of $\pi_t$ provided the original and the proxy distributions are close to each other in some suitably chosen topology on the space of distribution. 

For the current work, the way in which we ensure that two distributions (original and proxy) remains close to each other is by adding the right type of stochasticity to the model and ``in the right directions". This is done through 
the Stochastic Advection by Lie Transport (SALT) approach~\cite{holm2015variational}. The stochasticity is calibrated to match the fluctuations of $X$ as  explained in see~\cite{CCHWS2019_1,CCHWS2019_2}. We emphasize that one does not seek a pathwise approximation of the true state, but only an approximation of its distribution.

In our case, the true state is deterministic, and the process $X$ is random. As we saw in~\cite{CCHWS2019_1,CCHWS2019_2}, one can visualise the distribution of $X$ through ensembles of particles with trajectories that are solutions of the stochastic QG equation~\eqref{eq:SLTpv} computed
on the grid $G_s$ and driven by independent families of Brownian motions. In the language of uncertainty quantification, the difference between the two distributions is  
interpreted as the ``uncertainty of the model". Typically, the ensemble of particles is a spread "around" the true trajectory, the size of the spread measuring the (model) uncertainty. To visualise this one can look at projections of the true trajectory and the ensembles of particles at various grid points. Of course, the more refined the grid $G_s$ is, the closer the two distributions are, and the smaller the spread. However, refining the grid $G_s$ leads to an increase in the computational effort of generating the particle trajectories. One of the roles of data assimilation is to reduce the spread (the uncertainty) without refining the grid.  

The average position of the ensemble of particles obtained through the data assimilation, denoted by $\widehat{Z}_t$, is a pointwise estimate of the true state $Z_t$, whilst the spread of the ensemble is a measure of the approximation error $Z_t-\widehat{Z}_t$. As explained in the introduction, the data assimilation methodology presented here is asymptotically consistent: the empirical distribution of the particles converges, as $N\mapsto \infty$ to the posterior distribution $\pi_t$~\cite{Crisan2002ASO}. As a consequence, $\widehat{Z}_t$ converges to the conditional expectation of the true state $Z_t$ given the observations, and the empirical covariance matrix converges to the conditional expectation of  $(Z_t-\widehat{Z}_t)(Z_t-\widehat{Z}_t)^{T}$ given the observations.\footnote{$(Z_t-\widehat{Z}_t)^{T}$ is the transpose of $(Z_t-\widehat{Z}_t)$}. This is true if the particles evolve according to the original distribution of the signal. In our case we use a proxy distribution, so the limit will be an approximation of $\pi_t$, the difference between this approximation and $\pi_t$ being controlled by the choice of the signal grid $G_s$.       

The data assimilation methodology described below consists in combination between the bootstrap particle filter and three additional procedures: nudging, tempering and jittering. The bootstrap particle filter cannot be used on its own to solve the data assimilation problem. The reason is that the particle likelihoods vary wildly from each other. That is because the particle themselves stray away from the true state rapidly and in different directions. This is reflected through observation data. One particle likelihood or a small number of such likelihoods will become much higher than the rest, and only the corresponding particle(s) will be selected and multiplied. This will not offer a good representation of the posterior distribution. As we will explain below the additional procedures will eliminate this effect ensuring a reasonably spread set of particles.

In the next subsections we study how each of these individual procedures  
influences the accuracy of the estimation $\widehat{Z}_t$ and the quality of the 
forecast given by the stochastic QG model. In order to study how the dimension of the observation process $Y$ (the number of weather stations) affects data assimilation,
we consider two different data grids $G_d=\{4\times4,8\times4\}$. We also study stochastic solutions on 
two different signal grids $G_s=\{129\times65,257\times129\}$ in order to highlight the effect of more accurate proxy distributions on the results. 

As stated above, we will use ensembles $\mathcal{S}$ of solutions of the
stochastic QG equation~\eqref{eq:SLTpv} driven by independent realizations of the Brownian noise $W$. For the purpose of this paper, the size of the ensemble is taken to be $N=100$ and the number of Brownian motion (independent sources of stochasticity) is taken to be $K=32$; as already stated, the elements of the ensembles will be called particles. It was numerically shown in~\cite{CCHWS2019_2} that $N=100$ and $K=32$ is enough to reasonably approximate the fluctuations of the original distribution. Through  numerical experiments, we showed that the spread of the ensemble will not increase substantially by
taking more particles and/or sources of noise (Brownian motions).

The observations data $Y_t$ is, in our case, an $M$-dimensional process that consists of noisy measurements of the velocity field $\mathbf{u}$ taken at a point belonging to the data grid $G_d$: 
$$
Y_t:={\rm P}^s_d(Z_t)+\eta,
$$
where $\mathcal{P}^s_d:G_s\rightarrow G_d$ is a projection operator
from the signal grid $G_s$ to the data grid $G_d$,
$\eta=\mathcal{N}(\mathbf{0},I_\sigma)$ is a normally distributed random vector, with mean vector  $\mathbf{0}=(0,\ldots,0)$ and diagonal covariance matrix $I_\sigma=diag(\sigma_1^2,\ldots,\sigma_M^2)$. Rather than choosing an arbitrary  $\sigma=(\sigma_1,\ldots,\sigma_M)$ for the standard deviation of the noise, we use the standard deviation of the velocity field computed
over the coarse grid cell of the signal grid.

We introduce the likelihood-weight function 
\begin{equation}
 \mathcal{W}(\mathbf{X},\mathbf{Y})=
 \exp\left(-\frac12\sum\limits^M_{i=1}\left\|\frac{{\rm P}^s_d (X_i)-Y_i}{\sigma_i}\right\|^2_2\right),
 \label{eq:prob_function}
\end{equation}
with $M$ being the number of grid points (weather stations). 
In order to measure the variability of the weights~\eqref{eq:prob_function} of particles we use the effective sample size:
\begin{equation}
 {\rm ESS}(\overline{\mathbf{w}})=\left(\sum\limits^N_{i=1}\left(\overline{w}_i\right)^2\right)^{-1},\quad
 \overline{\mathbf{w}}:=\mathbf{w}\left(\sum\limits^N_{i=1}w_i\right)^{-1},
 \label{eq:ess}
\end{equation}
which is close to the ensemble size $N$ if the particles have weights that are close to each other, 
and decays to one, as the ensemble degenerates 
(i.e. there are fewer and fewer particles with large weights and the rest have small weights). One should resample for the weighted ensemble if the ESS drops below a given threshold, $N^*$, 
$$
{\rm ESS}<N^*.
$$
We chose $N^*=80$ to be our threshhold.

\subsection{Bootstrap particle filter}
\label{sec:bootstrap}
In this section we consider the most basic particle filter, called the bootstrap particle filter
or Sampling Importance Resampling filter~\cite{DFG2001}. This method works as follows.

Given an initial distribution of particles, each particle is propagated forward according to the stochastic
QG equation. Then, based on partial observations, $\mathbf{Y}_{t_{j+1}}$ of the true state, the weights of 
new particles are computed, and if the effective sample size drops below the critical value $N^*$, the particles 
are resampled to remove particles with small weights. 

\begin{algorithm}
\caption{Bootstrap particle filter}
\begin{algorithmic}
\FOR{$j=0,1,2,\ldots$}
\STATE{Solve\, $\dd q^{(n)}_i+\left(\mathbf{u}^{(n)}_i\,dt +{\color{red} \sum\limits^K_{k=1} \xi^k_i \circ dW^{(n)}_k}\right)\cdot\nabla q^{(n)}_i=F^{(n)}_i\,dt,\quad 
i=1,2;\quad t\in[t_j,t_{j+1}],\quad n\in[1..N].$}
\STATE{Obtain observation $\mathbf{Y}_{t_{j+1}}$ from weather stations}
\STATE{Compute $\overline{\mathbf{w}}:=\mathcal{W}(\mathbf{q}_{t_{j+1}},\mathbf{Y}_{t_{j+1}})$}
\IF{${\rm ESS(\overline{\mathbf{w}})}<N^*$}
\STATE{$\mathbf{q}_{t_{j+1}}:={\bf Resample}(\overline{\mathbf{w}})$}
\ENDIF
\ENDFOR
\end{algorithmic}
\end{algorithm}

For a high dimensional problem such as the one studied in this paper, the effective sample size drops very quickly to 1 as the sample degenerates rapidly. The reason for this is that particles travel very quickly away from the true state, and this is picked up by the observation data (unless the measurement noise is large which is not in our case - the observations are accurate). To counteract this, the resampling procedure would need to be performed unreasonably frequently or a large number of particles would need to be used. 


To maintain the diversity of the ensemble we use instead three additional procedures: \textit{the tempering technique}
and \textit{jittering} based on the Metropolis-Hastings Markov chain Monte Carlo (MCMC) method. We explain each of these procedures in the following sub-sections.

\subsection{Tempering and Jittering \label{sec:tempering}}

We will explain briefly the usage of these two procedures, see \cite{CCHWS2019_3} for further details. The idea behind \textit{tempering} is to artificially flatten the weights  thorough rescaling 
the log likelihoods by a factor $\phi\in(0,1]$, which is called temperature. Once this is done resampling can be applied. This gives a much more diverse ensemble as the ESS will have more reasonable values (the temperature is specifically chosen to ensure this). 
However, some of resulting particles will still have duplicates. To eliminate these, one uses \textit{jittering}. 

\textit{Jittering} is another technique
which improves the diversity of the ensemble by computing new particles which have been duplicated during resampling. There are different ways how to diversify the ensemble. For example, one
can jitter the particles by simply adding some random perturbations to them. However, in this case, 
the perturbed particles are not the solutions of the stochastic QG equation that, in turn, can lead to nonphysical behaviour of 
the model. Instead, we compute new particles by solving the stochastic QG equation driven by the Brownian motion
$\rho W+\sqrt{1-\rho^2}\,d\widetilde{W}$, where $W$ is the original Brownian motion $W$ and $\widetilde{W}$ is a new Brownian motion independent of $W$. The perturbation parameter $\rho$ is chosen so that particles are not placed too far from the original position, yet far enough to ensure the diversity of the sample. In our experiments, we use $\rho=0.9999$. Each new proposal for the position of the particle is then accepted/rejected thorough a standard Metropolis-Hastings method, in which $M_1$ stands for the number of iterations; we choose $M_1=20$. This ensures that the perturbations do not change the sought distribution.

Of course, after the first tempering-jittering cycle has finished,  the particles in the resulting ensemble are samples of the altered distribution which is not what we desire, therefore the procedure is repeated by finding the next temperature value in the range $(\varphi,1]$ that offers a reasonable ESS. This is repeated until the temperature scaling is 1 so that the original distribution is recovered.
The tempering-jittering methodology is given by Algorithm 4 below.

\begin{algorithm}
\caption{Particle Filter with Tempering and MCMC}
\begin{algorithmic}
\FOR{$j=0,1,2,\ldots$}
\STATE{Solve\, $\dd q^{(n)}_i+\left(\mathbf{u}^{(n)}_i\,dt +{\color{red} \sum\limits^K_{k=1} \xi^k_i \circ dW^{(n)}_k}\right)\cdot\nabla q^{(n)}_i=F^{(n)}_i\,dt,\quad 
i=1,2;\quad t\in[t_j,t_{j+1}],\quad n\in[1..N].$}
\STATE{Obtain observation $\mathbf{Y}_{t_{j+1}}$ from weather stations}
\STATE{Compute $\overline{\mathbf{w}}:=\mathcal{W}(\mathbf{q}_{t_{j+1}},\mathbf{Y}_{t_{j+1}})$}
\IF{${\rm ESS(\overline{\mathbf{w}})}<N^*$}
\STATE{Find $p$ such that $ESS(\overline{\mathbf{w}})\ge N^*$, where
$\overline{\mathbf{w}}:=\mathcal{W}^{1/p}(\mathbf{q}_{t_{j+1}},\mathbf{Y}_{t_{j+1}})$}
\FOR{$k=1,2,\ldots,p$}
\STATE{Compute $\overline{\mathbf{w}}:=\mathcal{W}^{\phi_k}(\mathbf{q}_{t_{j+1}},\mathbf{Y}_{t_{j+1}}),\ \phi_k:=\frac{k}{p}$}
\STATE{$\mathbf{q}_{t_{j+1}}:={\bf Resample}(\overline{\mathbf{w}})$}
\FOR{$m=1,2,\ldots,M_1$}
\STATE{${\color{red} \Xi^{(n)}_i:=\sum\limits^K_{k=1} \xi_{k,i} \circ \left(\rho\, dW^{(n)}_{k}+\sqrt{1-\rho^2}\,d\widetilde{W}^{(n)}_{k}\right)},\quad n\in[1..N]$}
\STATE{Solve\, $\dd \widetilde{q}^{(n)}_i+
\left(\widetilde{\mathbf{u}}^{(n)}_i\,dt +{\color{red} \Xi^{(n)}_i}\right)\cdot\nabla \widetilde{q}^{(n)}_i=\widetilde{F}^{(n)}_i\,dt,\quad 
i=1,2;\quad t\in[t_j,t_{j+1}],\quad n\in[1..N]$}
\FOR{n=1,2,\ldots,N}
\STATE{$\alpha:=\left(\mathcal{W}(\widetilde{q}^{(n)}_{t_{j+1}},Y^{(n)}_{t_{j+1}})/\mathcal{W}(q^{(n)}_{t_{j+1}},Y^{(n)}_{t_{j+1}})\right)^{\phi_k}$}
\IF{$1\le\alpha$}
\STATE{$q^{(n)}_{t_{j+1}}:=\widetilde{q}^{(n)}_{t_{j+1}}$}
\ELSIF{$\mathcal{U}[0,1]<\alpha$}
\STATE{$q^{(n)}_{t_{j+1}}:=\widetilde{q}^{(n)}_{t_{j+1}}$}
\ENDIF
\ENDFOR
\ENDFOR
\ENDFOR
\ENDIF
\ENDFOR
\end{algorithmic}
\end{algorithm}

\subsection{Nudging}
\label{sec:nudging}
Tempering combined with jittering is a powerful technique which can correctly narrow the stochastic spread in the presence of informative data, while also 
maintaining the diversity of the ensemble over a long time period. Their combined success depends crucially on the quality of the original sample proposals. This quality is produced by evolving the particles using the SPDE (the proxy distribution) and not the true distribution. To reduce the discrepancy introduced in this way, one can use \textit{nudging}. The idea of nudging is to correct the solution of SPDE~\eqref{eq:SLTpv} 
so as to keep the particles closer to the true state.  To do so, we add a `nudging term' (marked in blue) to SPDE~\eqref{eq:SLTpv},
\begin{align}
 \label{eq:SLTpv_nudge}
\dd q_i(\lambda)+\left(\mathbf{u}_i(\lambda)\,dt +{\color{red} \sum\limits^K_{k=1} \xi^k_i \circ dW^k_t}
+{\color{blue}\sum\limits_{k=1}^K\boldsymbol{\xi}^k_i\lambda_k\,dt}\right)\cdot\nabla q_i(\lambda)
&=F_i\,dt,\quad i=1,2.
\end{align}
Note that $q$ depends on the parameter $\lambda$.
The trajectories of the particles will be solutions of this perturbed SPDE~\eqref{eq:SLTpv_nudge}. 
To account for the perturbation, the particles will have new weights according to Girsanov's theorem, given by 
\begin{equation}
 \mathcal{W}({\mathbf{q}}(\lambda),\mathbf{Y},\lambda)=
 \exp\left(-\left(\displaystyle\left[\frac12\sum\limits_{i=1}^M\Big\|\frac{P^s_d({\mathbf{q}}_{t_{j+1}}(\lambda))-\mathbf{Y}_{t_{j+1}}}{\sigma_i}\Big\|^2_2+
{\color{blue}\int_{t_{j}}^{t_{j+1}}\left(\lambda^2_k\frac{ dt}{2}-\lambda_k dW_k\right)} \right]\right)\right).
 \label{eq:girsanov}
\end{equation}
As explained above, these weights measure the likelihood of the position of the particles given the observation, 
and the last term accounts for the change of probability distribution from $q$ to $q(\lambda)$. It therefore makes sense to choose weights that maximize these likelihoods. In other words, we could look to solve the equivalent minimization problem
\begin{equation}
 \label{eq:min_lambda_nudge}
\min\limits_{\color{blue}\lambda_k,\ k\in[1..K]} 
\displaystyle\left[\frac12\sum\limits_{i=1}^M\Big\|\frac{P^s_d({\mathbf{q}}_{t_{j+1}}(\lambda))-\mathbf{Y}_{t_{j+1}}}{\sigma_i}\Big\|^2_2+
{\color{blue}\int_{t_j}^{t_{j+1}}\left(\lambda^2_k\frac{dt}{2}-\lambda_k dW_k\right)} \right]
\end{equation}
together with~\eqref{eq:SLTpv_nudge}. 
In general this is a challenging nonlinear optimisation problem, especially if one allows the $\lambda_k$'s to vary in time.

To simplify the problem, we perturb only
the corrector stage of the final timestep before $t_{j+1}$. Then the (discrete version of the) minimization problem~\eqref{eq:min_lambda_nudge} 
becomes
\begin{equation}
 \label{eq:min_lambda_nudge discrete}
\min\limits_{\color{blue}\lambda_k,\ k\in[1..K]} 
\displaystyle\left[\frac12\sum\limits_{i=1}^M\Big\|\frac{P^s_d({\mathbf{q}}_{t_{j+1}}(\lambda))-\mathbf{Y}_{t_{j+1}}}{\sigma_i}\Big\|^2_2+
{\color{blue}\sum\limits_{k=1}^K\left(\lambda^2_k\frac{\delta t}{2}-\lambda_k\Delta W_k\right)}\right],
\end{equation}
where $\delta t$ is the time step.
Let us re-write 
\[
{\mathbf{q}}_{t_{j+1}}(\lambda)=A(q_{t_{j+1/2}})+\sum
\limits_{k=1}^{K}B_{k}(\tilde{q}_{t_{j+1}})(\Delta W_{k}+\lambda _{k}\delta t)
\],
where $q_{t_{j+1/2}}$ and $\tilde{q}_{t_{j+1}}$ are computed in the prediction and the extrapolation steps, respectively (see Algorithm 2 for detail).

We can then re-write the minimisation problem \eqref{eq:min_lambda_nudge discrete} as
\begin{equation}
 \label{eq:min_lambda_nudge discrete'}
\min\limits_{\color{blue}\lambda_k,\ k\in[1..K]}
\mathcal{V}({\mathbf{q}}(\lambda ),\mathbf{Y},\lambda ),
\end{equation}
where
\[
\mathcal{V}({\mathbf{q}}(\lambda ),\mathbf{Y},\lambda )=
Q+Q_{1}(\lambda )+
Q_{2}(\lambda ,\Delta W_{1},...,\Delta W_{K}),
\]
with
\[
Q:=\frac{1}{2}\sum\limits_{i=1}^{M}\left\vert \left\vert \frac{P_{d}^{s}(A(q_{t_{j+1/2}}))-\mathbf{Y}_{t_{j+1}}}{\sigma _{i}}\right\vert \right\vert
_{2}^{2},
\]%
and 
\begin{eqnarray*}
Q_{1}(\lambda ) &:=&\frac{1}{2}\sum\limits_{i=1}^{M}\left\vert \left\vert
\sum\limits_{k=1}^{K}\frac{P_{d}^{s}\left( B_{k}(\tilde{q}_{t_{j+1}})\right)\lambda _{k}\delta t}{\sigma _{i}}\right\vert \right\vert _{2}^{2}\\
&+& 
\sum\limits_{i=1}^{M}\frac{1}{\sigma _{i}^{2}}\left\langle \left(
P_{d}^{s}(A(q_{t_{j+1/2}}))-\mathbf{Y}_{t_{j+1}}
,\sum\limits_{k=1}^{K}P_{d}^{s}\left( B_{k}(\tilde{q}_{t_{j+1}})\right) \lambda_{k}\delta t\right) \right\rangle+
{\sum\limits_{k=1}^{K}\lambda _{k}^{2}\frac{\delta t}{2}},\\
Q_{2}(\lambda ,\Delta W_{1},...,\Delta W_{K}) &:=&
\sum\limits_{i=1}^{M}\frac{1}{\sigma _{i}^{2}}\left\langle \left(
P_{d}^{s}(A(q_{t_{j+1/2}}))-\mathbf{Y}_{t_{j+1}}+\sum%
\limits_{k=1}^{K}P_{d}^{s}\left( B_{k}(\tilde{q}_{t_{j+1}})\right) \lambda_{k}\delta t\right) ,\sum\limits_{k=1}^{K}P_{d}^{s}\left( B_{k}(\tilde{q}_{t_{j+1}})\right) \Delta W_{k}\right\rangle  \\
&+&\frac{1}{2}\sum\limits_{i=1}^{M}\left\vert \left\vert \frac{%
\sum\limits_{k=1}^{K}P_{d}^{s}\left( B_{k}(\tilde{q}_{t_{j+1}})\right)
\Delta W_{k}}{\sigma _{i}}\right\vert \right\vert
_{2}^{2}-\sum\limits_{k=1}^{K}{\lambda _{k}\Delta W_{k}}.
\end{eqnarray*}%
This is a quadratic minimization problem with the optimal value $\lambda $ depending (linearly) on the increments $\Delta W_{1},...,\Delta W_{K}$. This optimal choice is not allowed as the parameter $\lambda$ can only be a function of all the approximation $\tilde{q}_{t_{j+1}}$, $q_{t_{j+1/2}}$ and $Y_{t_{j+1}}$ (since it needs to be adapted
to the forward filtration of the set of Brownian motions $\{W_{k}\}$).
To ensure that this constraint is satisfied, we minimise \emph{the conditional expectation} of \ $%
\mathcal{V}({\mathbf{q}}(\lambda ),\mathbf{Y},\lambda )$ given the $\tilde{q}_{t_{j+1}}$, $q_{t_{j+1/2}}$ and $\mathbf{Y}_{t_{j+1}}$, that is 
$
\displaystyle
\min\limits_{\lambda _{k},\ k\in \lbrack 1..K]}\mathbb{E}\left[ \mathcal{V}({%
\mathbf{q}}(\lambda ),\mathbf{Y},\lambda )|\tilde{q}_{t_{j+1}}, q_{t_{j+1/2}},\mathbf{Y}%
_{t_{j+1}}\right].
$
We note that $Q$ is independent of $\lambda$ and does not play any role in the minimization operation. 

Also 
$\displaystyle\mathbb{E}\left[ Q_{2}(\lambda ,\Delta W_{1},...,\Delta W_{K})|\tilde{q}_{t_{j+1}}, q_{t_{j+1/2}},\mathbf{Y}%
_{t_{j+1}}\right]$ is independent of $\lambda$. Finally 
$Q_{1}(\lambda ,\Delta W_{1},...,\Delta W_{K})$ is measurable wrt $\tilde{q}_{t_{j+1}}, q_{t_{j+1/2}}\mathbf{Y}$, that is
$
\displaystyle
\mathbb{E}\left[ Q_{1}(\lambda )|\tilde{q}_{t_{j+1}}, q_{t_{j+1/2}},\mathbf{Y}_{t_{j+1}}\right]=Q_{1}(\lambda).
$
Consequently, we only need to minimize $Q_{1}(\lambda)$. 
This functional is quadratic in $\lambda$, and hence the optimization can be done by solving a linear system. This is the approach that we 
use in the present work. We note that this  approximation remains asymptotically consistent.

%

\begin{algorithm}
\caption{Particle Filter with Tempering, MCMC, and Nudging}
\begin{algorithmic}
\FOR{$j=0,1,2,\ldots$}
\STATE{Solve\, $\dd q^{(n)}_i+
\left(\mathbf{u}^{(n)}_i\,dt +{\color{red} \sum\limits^K_{k=1} \xi^k_i \circ dW^{(n)}_k}
+{\color{blue}\sum\limits_{k=1}^K\boldsymbol{\xi}^k_i\lambda_k\,dt}\right)\cdot\nabla q^{(n)}_i
=F^{(n)}_i\,dt,\quad 
i=1,2;\quad t\in[t_j,t_{j+1}],\quad n\in[1..N].$}
\STATE{Obtain observation $\mathbf{Y}_{t_{j+1}}$ from weather stations}
\STATE{\color{blue}Minimize $\displaystyle\left[\frac12\sum\limits_{i=1}^M\Big\|\frac{P^s_d(\mathbf{q}_{t_{j+1}})-\mathbf{Y}_{t_{j+1}}}{\sigma_i}\Big\|^2_2+\sum\limits_{k=1}^K\left(\lambda^2_k\frac{\delta t}{2}-\lambda_k\Delta W^{(n)}_k\right) \right]$ with respect to $\lambda_k,\ k\in[1..K].$}
\STATE{Compute $\overline{\mathbf{w}}:=\mathcal{W}(\mathbf{q}_{t_{j+1}},\mathbf{Y}_{t_{j+1}})$, 
with $\mathcal{W}(\mathbf{q}_{t_{j+1}},\mathbf{Y}_{t_{j+1}})=e^{-\Lambda},\ 
{\displaystyle\Lambda:=\frac12\sum\limits_{i=1}^M\Big\|\frac{P^s_d(\mathbf{q}_{t_{j+1}})-\mathbf{Y}_{t_{j+1}}}{\sigma_i}\Big\|^2_2
{\color{blue}+\sum\limits_{k=1}^K\left(\lambda^2_k\frac{\delta t}{2}-\lambda_k\Delta W^{(n)}_k\right)}}$\,.}
\IF{${\rm ESS(\overline{\mathbf{w}})}<N^*$}
\STATE{Find $p$ such that $ESS(\overline{\mathbf{w}})\ge N^*$, where
$\overline{\mathbf{w}}:=\mathcal{W}^{1/p}(\mathbf{q}_{t_{j+1}},\mathbf{Y}_{t_{j+1}})$}
\FOR{$k=1,2,\ldots,p$}
\STATE{Compute $\overline{\mathbf{w}}:=\mathcal{W}^{\phi_k}(\mathbf{q}_{t_{j+1}},\mathbf{Y}_{t_{j+1}}),\ \phi_k:=\frac{k}{p}$}
\STATE{$\mathbf{q}_{t_{j+1}}:={\bf Resample}(\overline{\mathbf{w}}).$}
\FOR{$m=1,2,\ldots,M_1$}
\STATE{${\color{red} \Xi^{(n)}_i:=\sum\limits^K_{k=1} \xi_{k,i} \circ \left(\rho\, dW^{(n)}_{k}+\sqrt{1-\rho^2}\,d\widetilde{W}^{(n)}_{k}\right)},\quad n\in[1..N]$}
\STATE{Solve\, $\dd \widetilde{q}^{(n)}_i+
\left(\widetilde{\mathbf{u}}^{(n)}_i\,dt +{\color{red} \Xi^{(n)}_i}\right)\cdot\nabla \widetilde{q}^{(n)}_i
=\widetilde{F}^{(n)}_i\,dt,\quad 
i=1,2;\quad t\in[t_j,t_{j+1}],\quad n\in[1..N]$}
\FOR{n=1,2,\ldots,N}
\STATE{$\alpha:=\left(\mathcal{W}(\widetilde{q}^{(n)}_{t_{j+1}},Y^{(n)}_{t_{j+1}})/\mathcal{W}(q^{(n)}_{t_{j+1}},Y^{(n)}_{t_{j+1}})\right)^{\phi_k}$}
\IF{$1\le\alpha$}
\STATE{$q^{(n)}_{t_{j+1}}:=\widetilde{q}^{(n)}_{t_{j+1}}$}
\ELSIF{$\mathcal{U}[0,1]<\alpha$}
\STATE{$q^{(n)}_{t_{j+1}}:=\widetilde{q}^{(n)}_{t_{j+1}}$}
\ENDIF
\ENDFOR
\ENDFOR
\ENDFOR
\ENDIF
\ENDFOR
\end{algorithmic}
\end{algorithm}

\section{Numerical results}
\label{sec:num_res}
We consider a horizontally periodic flat-bottom channel $\Omega=[0,L_x]\times[0,L_y]\times[0,H]$ 
with $L_x=3840\, \rm km$, $L_y=L_x/2\, \rm km$, and total depth 
$H=H_1+H_2$, with $H_1=1.0\, \rm km$, $H_2=3.0\, \rm km$.
The governing parameters of the QG model are typical to a mid-latitude setting, i.e.
the planetary vorticity gradient $\beta=2\times10^{-11}\, {\rm m^{-1}\, s^{-1}}$, lateral eddy viscosity $\nu=3.125\,\rm m^2 s^{-1}$, and
the bottom friction parameters $\mu=4\times10^{-8}$. 
The background-flow zonal velocities in~\eqref{eq:forcing} are given by $U=[6.0,0.0]\,\rm cm\, s^{-1}$, and
the stratification parameters in~\eqref{eq:q_psi} are $s_1=4.22\cdot10^{-3}\,\rm km^{-2}$, $s_2=1.41\cdot10^{-3}\,\rm km^{-2}$; chosen so that the first Rossby deformation radius is $Rd_1=25\, {\rm km}$.
In order to ensure that the numerical solutions are statistically equilibrated, the model is initially spun up from the state of rest to $t=0$ over the time
interval $T_{spin}=[-50,0]\, {\rm years}$. The numerical solutions of the deterministic QG model~\eqref{eq:pv}
at different resolutions are presented in Fig.~\ref{fig:qf_all_grids}.

\begin{figure}[H]
\centering
\begin{tabular}{cccc}
\multicolumn{2}{c}{\hspace*{0.4cm} \bf FIRST LAYER} & \multicolumn{2}{c}{\hspace*{1cm} \bf SECOND LAYER}\\
& & &\\[-0.25cm]
\multicolumn{4}{c}{\hspace*{0.4cm} \textbf{(a)} $G=2049\times1025$}\\
& \hspace*{-0.325cm}\begin{minipage}{0.225\textwidth}\includegraphics[width=4cm]{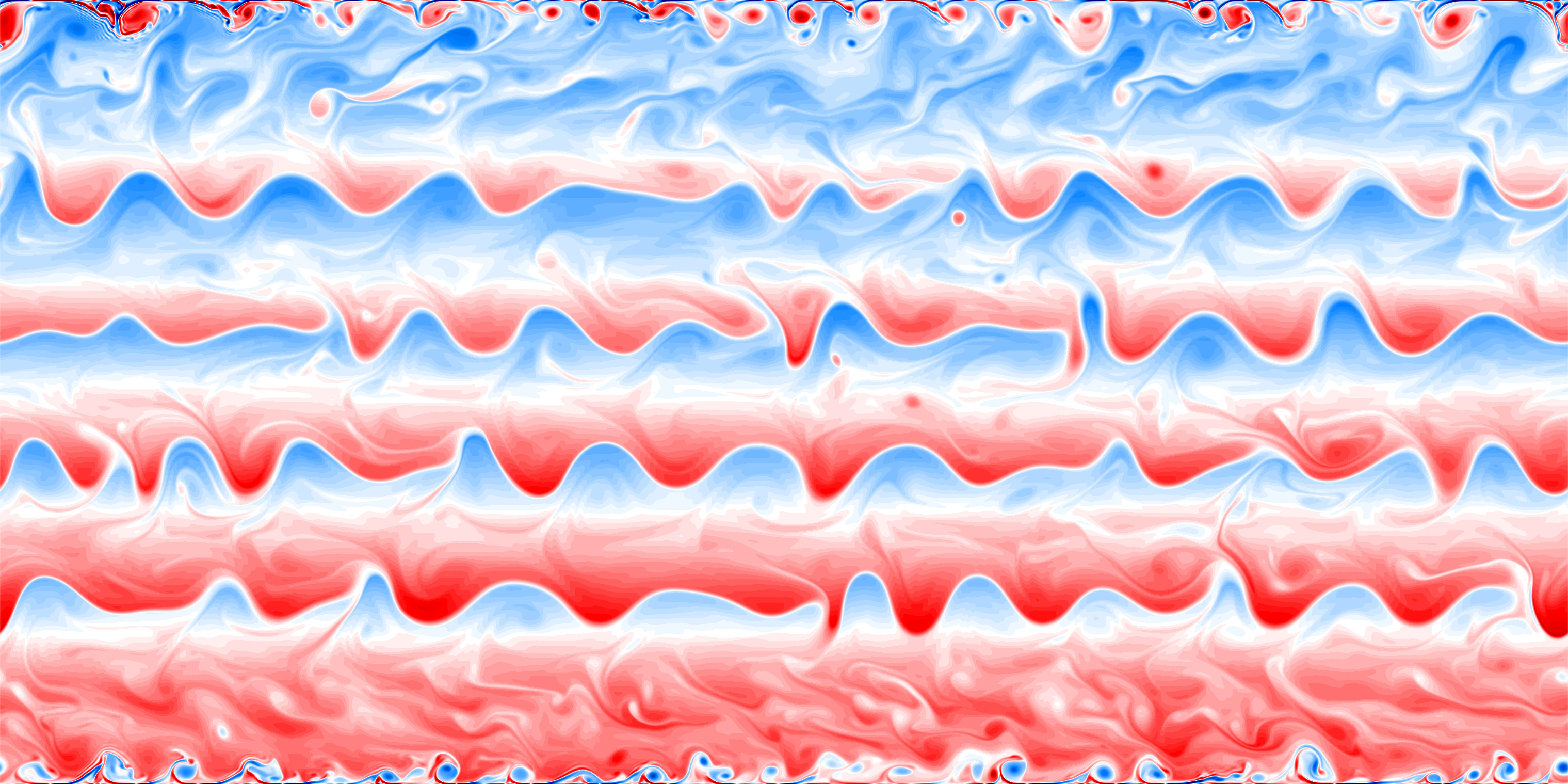}\end{minipage}
& \quad \begin{minipage}{0.02\textwidth}$\color{white}q^f_2$\end{minipage} & \hspace*{-0.325cm}\begin{minipage}{0.225\textwidth}\includegraphics[width=4cm]{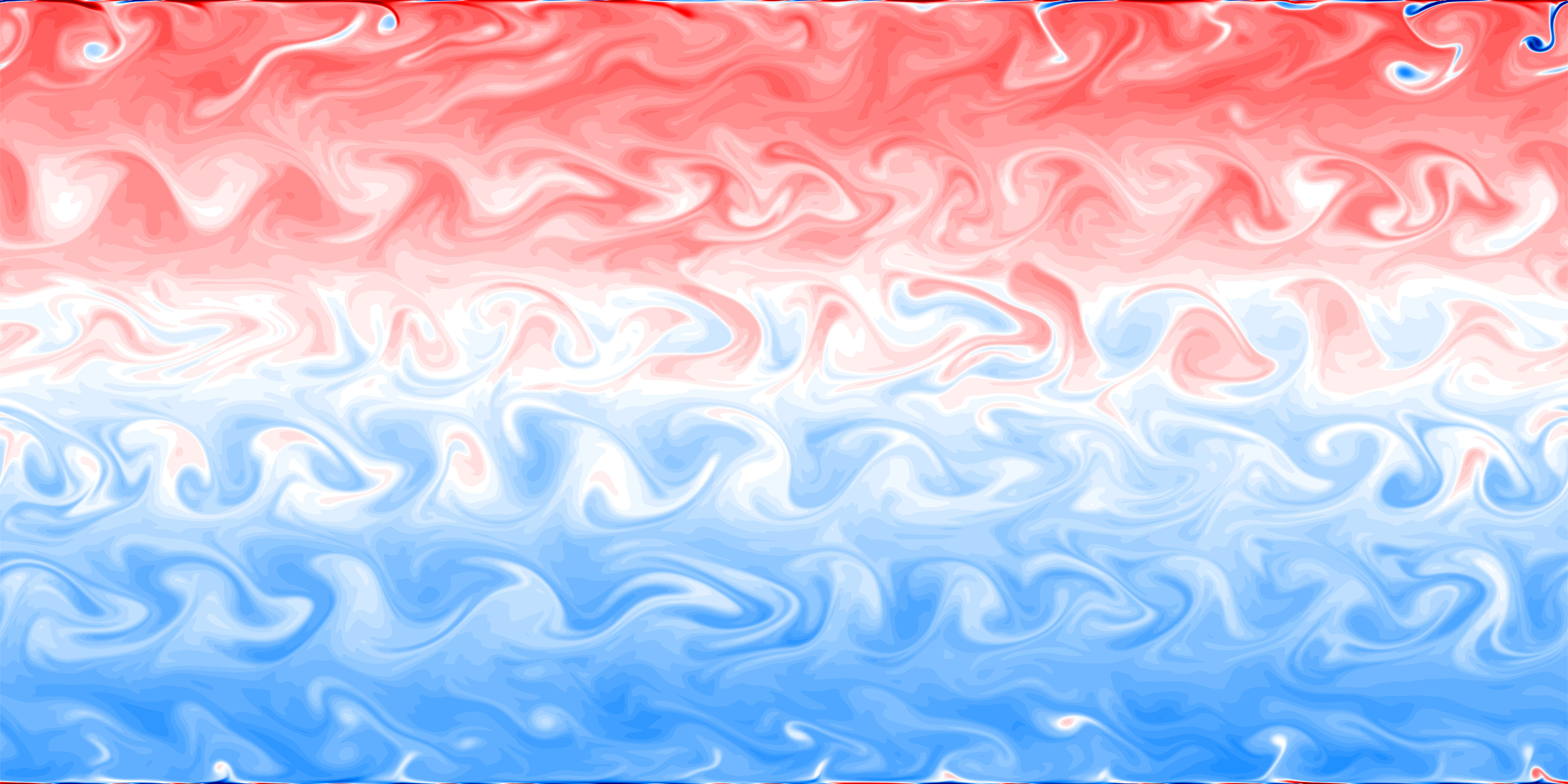}\end{minipage}\\
\\[-0.125cm]
\multicolumn{4}{c}{\hspace*{0.4cm} \textbf{(b)} $G=1025\times513$}\\
& \hspace*{-0.325cm}\begin{minipage}{0.225\textwidth}\includegraphics[width=4cm]{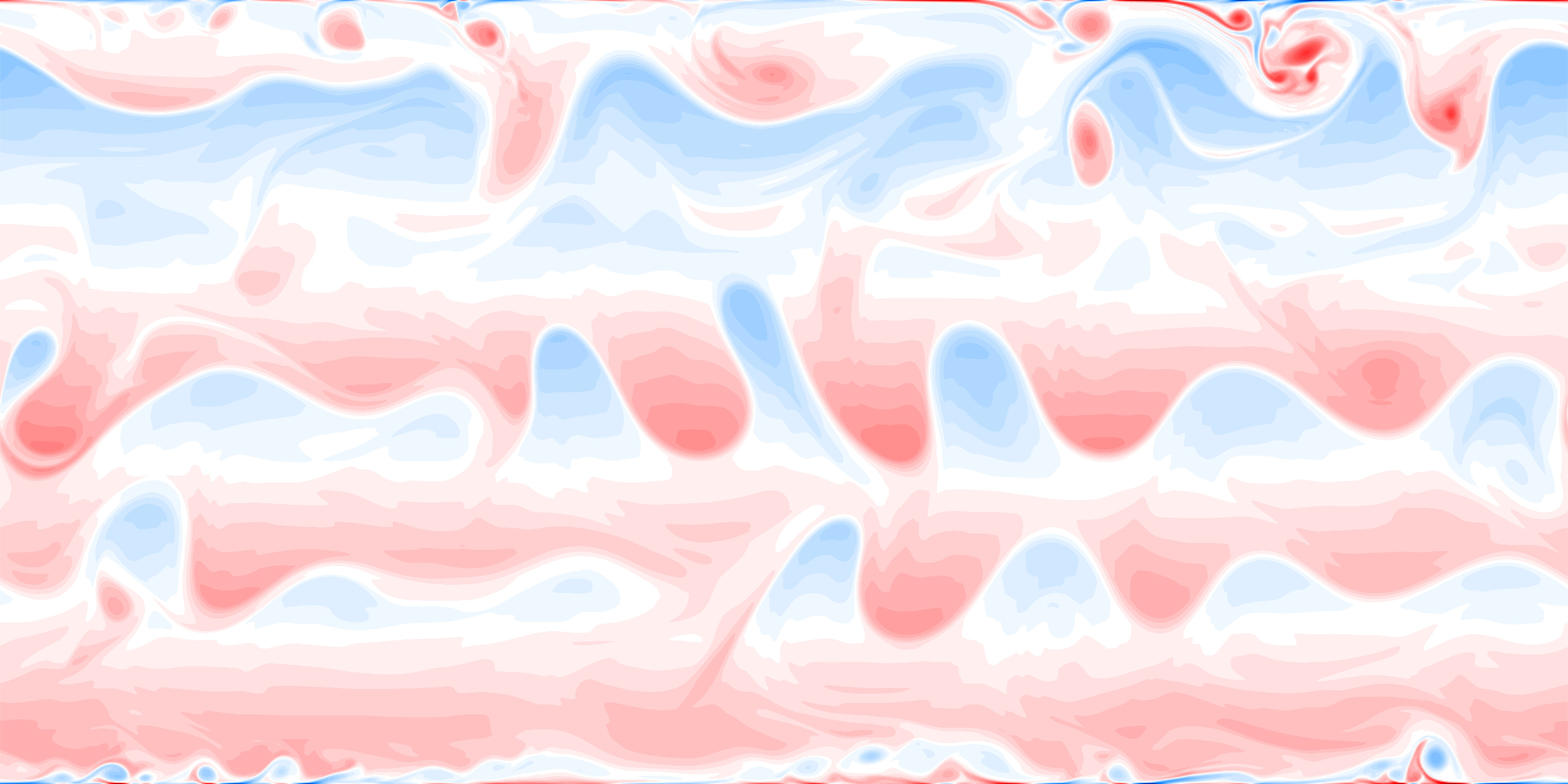}\end{minipage}
& \quad \begin{minipage}{0.02\textwidth}$\color{white}q^f_2$\end{minipage} & \hspace*{-0.325cm}\begin{minipage}{0.225\textwidth}\includegraphics[width=4cm]{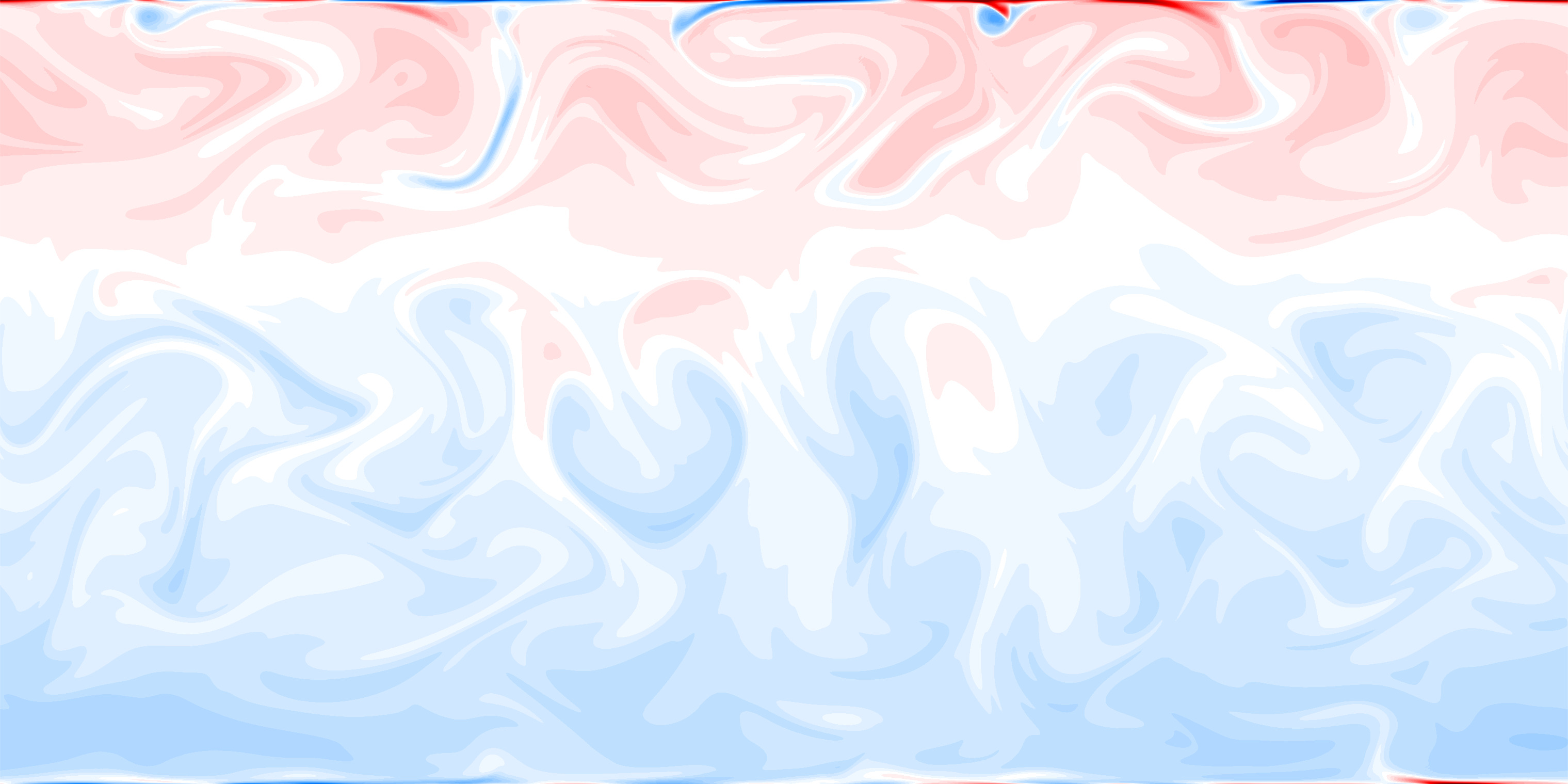}\end{minipage}\\
\\[-0.125cm]
\multicolumn{4}{c}{\hspace*{0.4cm} \textbf{(c)} $G=513\times257$}\\
& \hspace*{-0.325cm}\begin{minipage}{0.225\textwidth}\includegraphics[width=4cm]{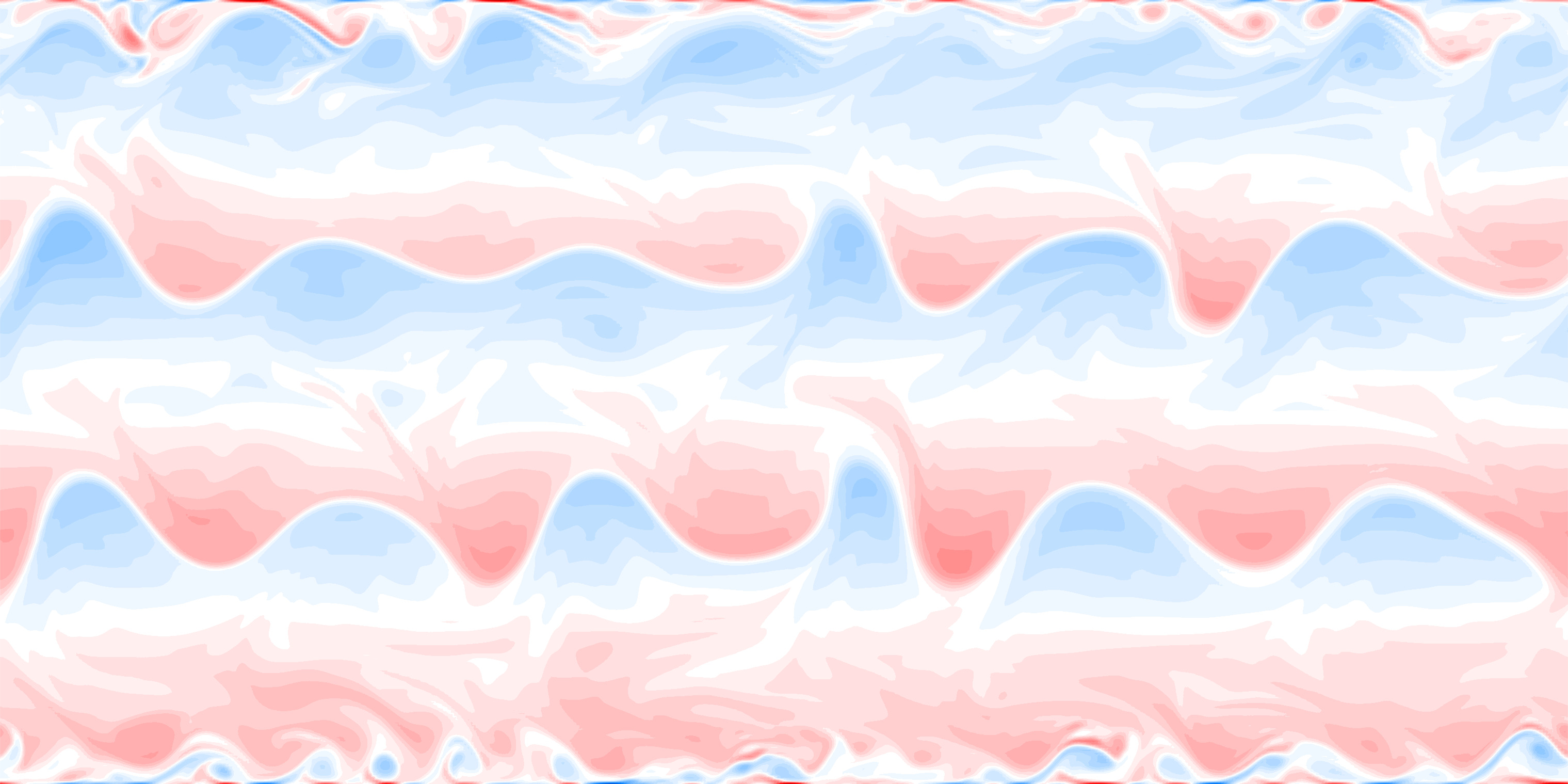}\end{minipage}
& \quad \begin{minipage}{0.02\textwidth}$\color{white}q^f_2$\end{minipage} & \hspace*{-0.325cm}\begin{minipage}{0.225\textwidth}\includegraphics[width=4cm]{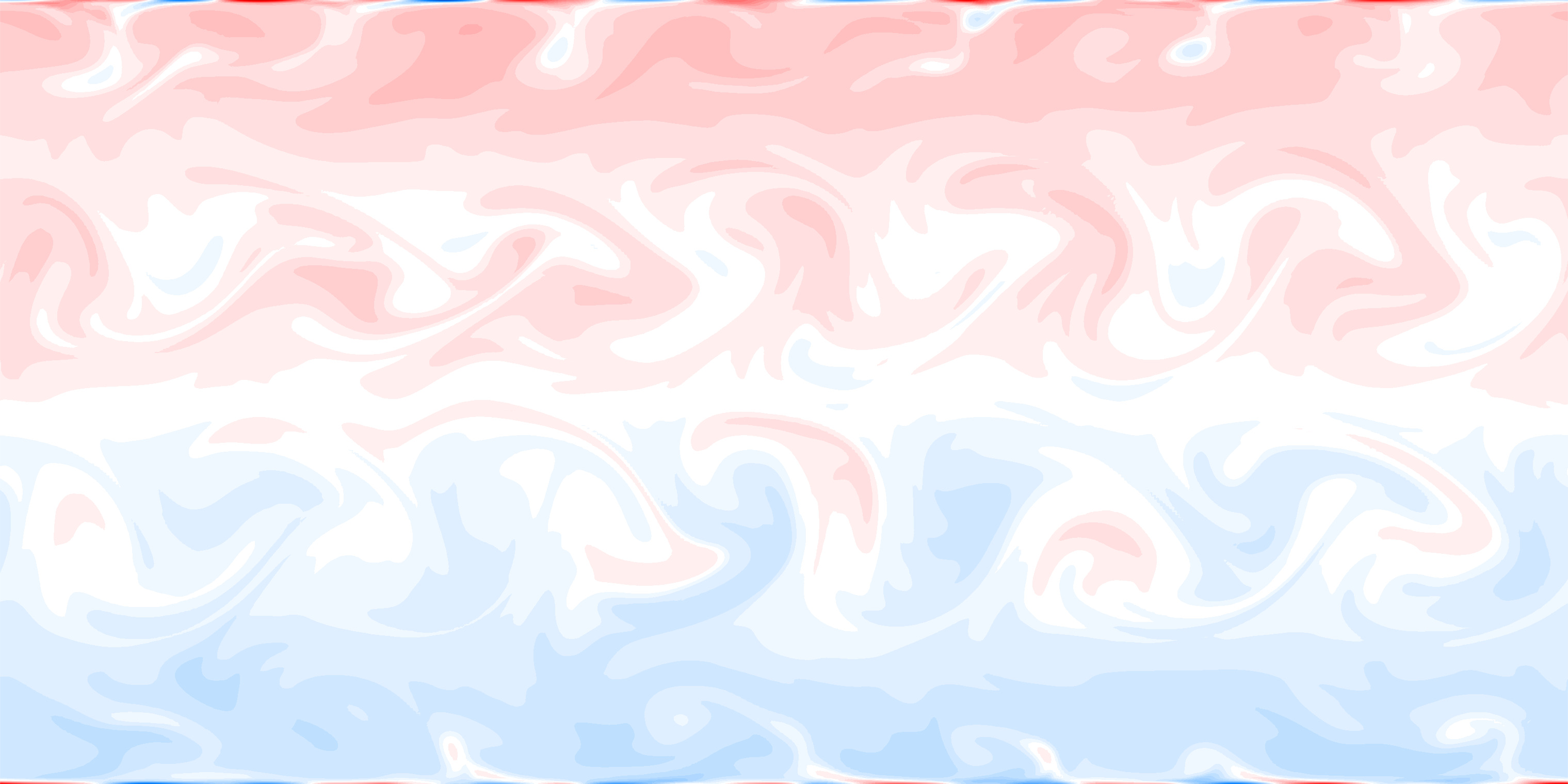}\end{minipage}\\
\\[-0.125cm]
\multicolumn{4}{c}{\hspace*{0.4cm} \textbf{(d)} $G=257\times129$}\\
& \hspace*{-0.325cm}\begin{minipage}{0.225\textwidth}\includegraphics[width=4cm]{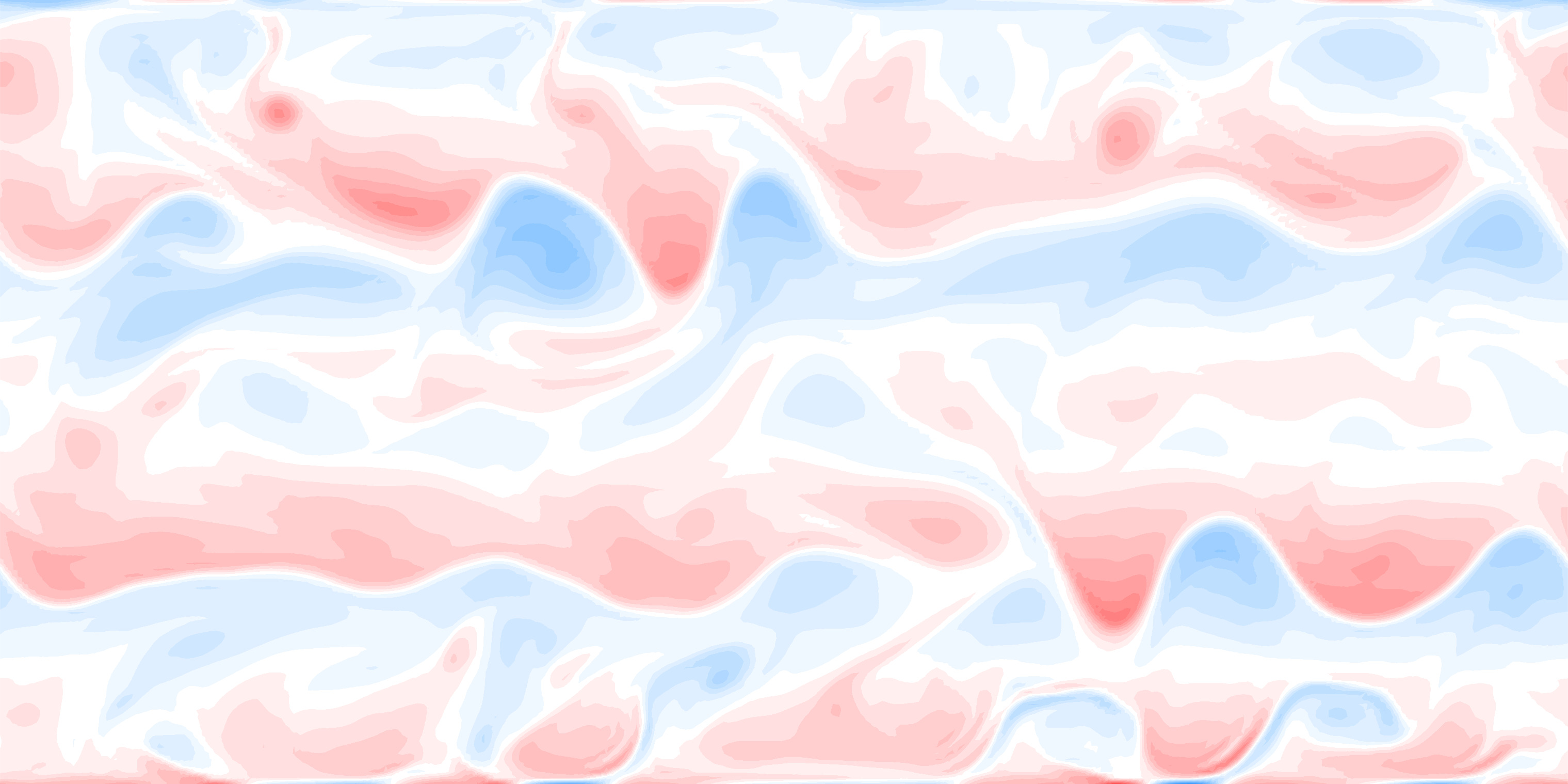}\end{minipage}
& \quad \begin{minipage}{0.02\textwidth}$\color{white}q^f_2$\end{minipage} & \hspace*{-0.325cm}\begin{minipage}{0.225\textwidth}\includegraphics[width=4cm]{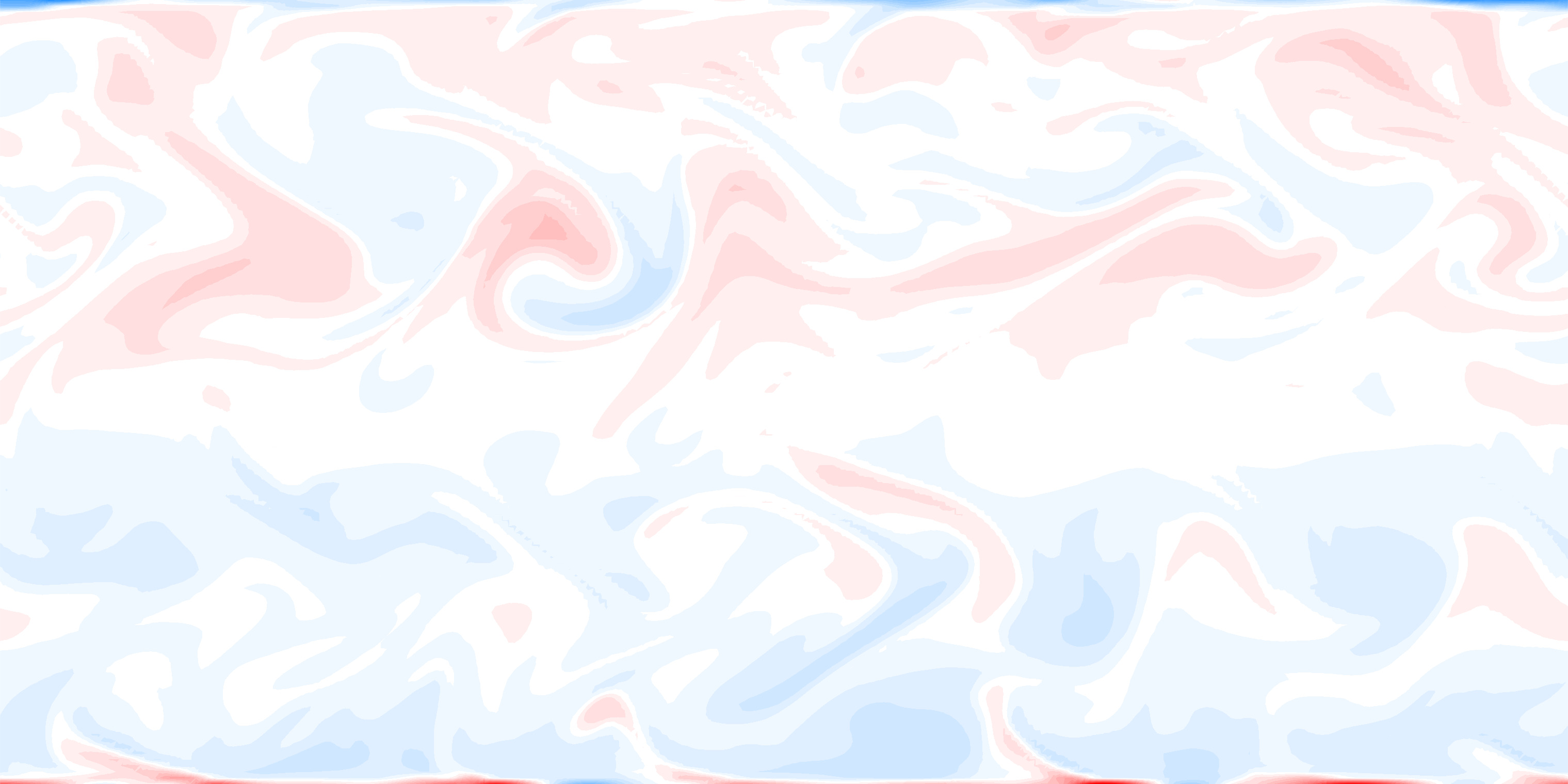}\end{minipage}\\
\\[-0.125cm]
\multicolumn{4}{c}{\hspace*{0.4cm} \textbf{(e)} $G=129\times65$}\\
& \hspace*{-0.325cm}\begin{minipage}{0.225\textwidth}\includegraphics[width=4cm]{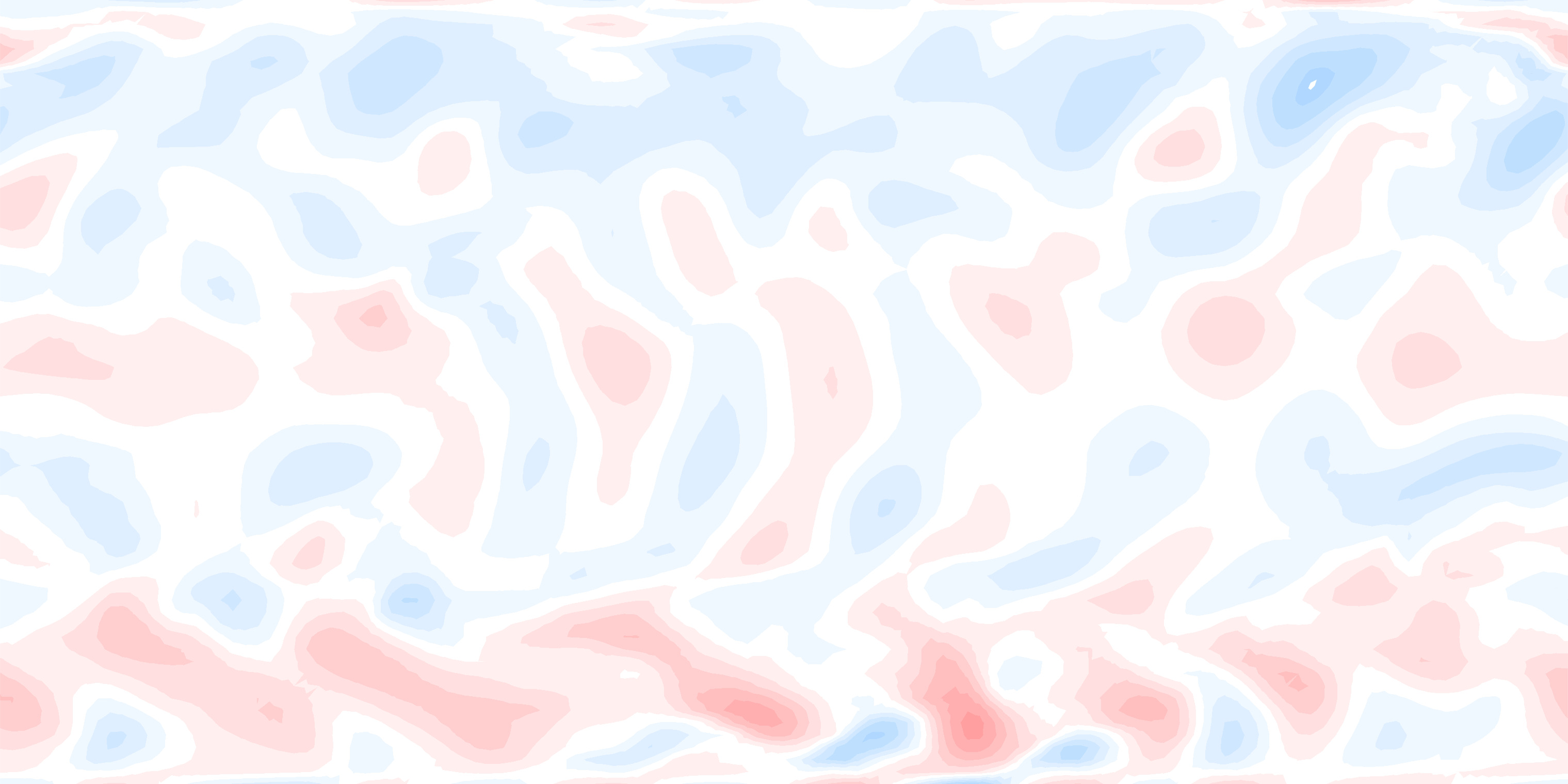}\end{minipage}
& \quad \begin{minipage}{0.02\textwidth}$\color{white}q^f_2$\end{minipage} & \hspace*{-0.325cm}\begin{minipage}{0.225\textwidth}\includegraphics[width=4cm]{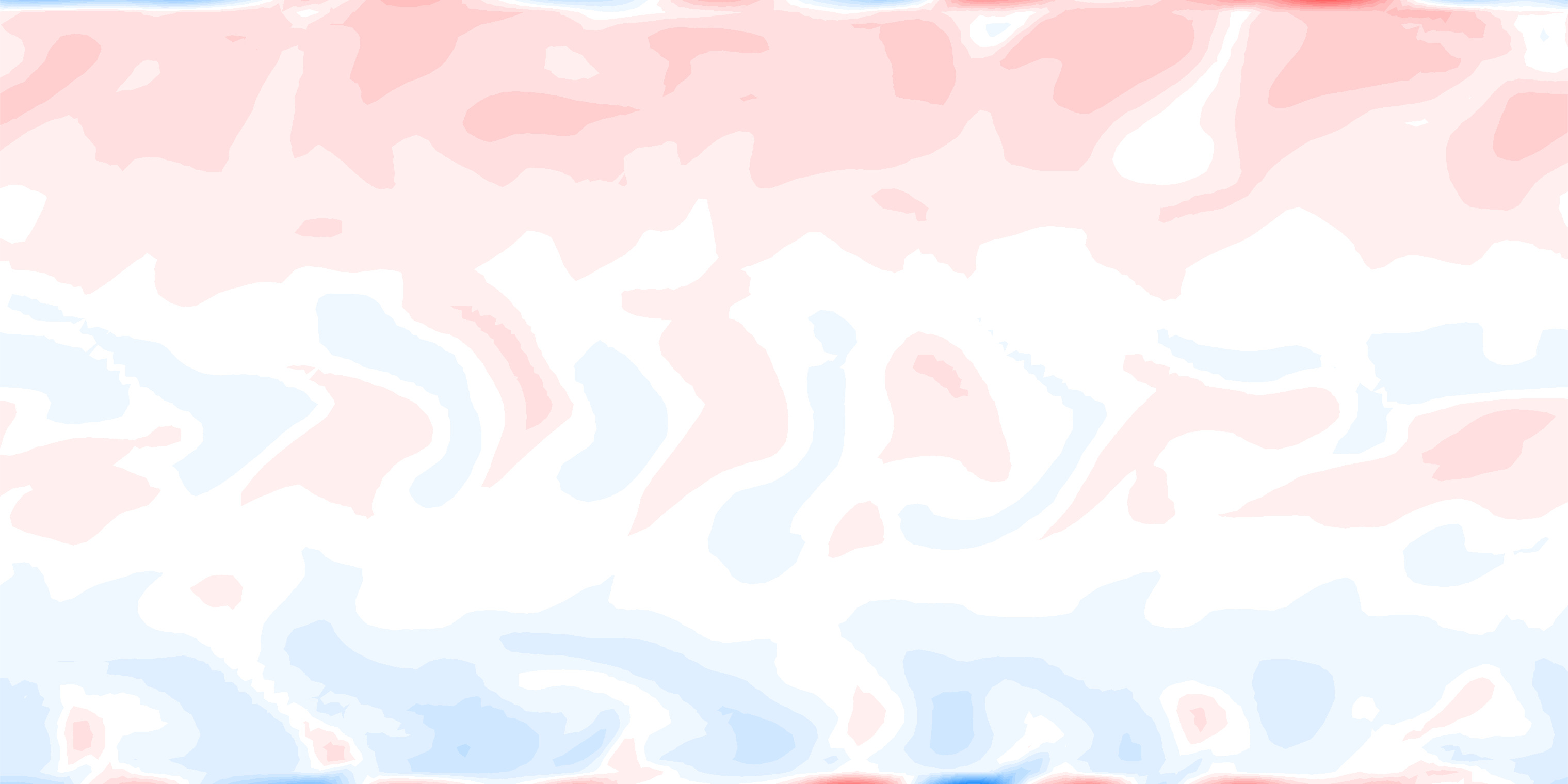}\end{minipage}\\
\\[-0.125cm]
\multicolumn{4}{c}{\hspace*{0.75cm}\includegraphics[width=6cm,height=0.5cm]{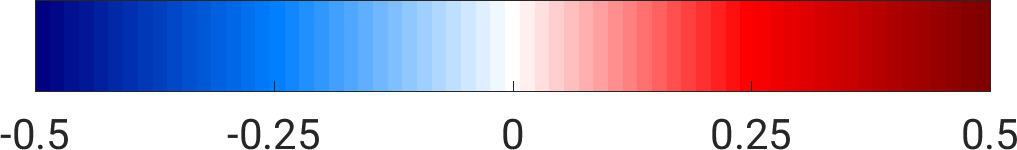}}\\
\end{tabular}
\caption{The series of snapshots of PV anomaly $q$ shows the dependence of the solution on the resolution.
All the fields are given in units of $[s^{-1}f^{-1}_0]$,  where $f_0=0.83\times10^{-4}\, {\rm s^{-1}}$ is the Coriolis parameter.
In order to visualize all the solutions on the same color scale we have multiplied the ones in the second layer by a factor of 5.}
\label{fig:qf_all_grids}
\end{figure}

We would like to draw the reader's attention to the fact that the solution significantly depends on the resolution (see Fig.~\ref{fig:qf_all_grids}). Namely, the number of 
jets for the highest resolution $2049\times1025$ (Fig.~\ref{fig:qf_all_grids}a) is four (four red striations in the interior of the computational domain; the boundary layers on the top and bottom boundaries are not counted as such).
However, there are only two jets for the lower-resolution flows: $G=1025\times513$ (Fig.~\ref{fig:qf_all_grids}b), 
$G=513\times257$ (Fig.~\ref{fig:qf_all_grids}c), $G=257\times129$ (Fig.~\ref{fig:qf_all_grids}d).
Moreover, the 
lowest resolution flow (computed on the grid $G=129\times65$, Fig.~\ref{fig:qf_all_grids}e) shows no jets at all, and this is the flow that we paramaterise and then apply the data assimilation methods
described above. We also use a finer grid $G=257\times129$ to study the effect of the resolutions on the data assimilation methods.
It is important to note that there is no smooth transition between solutions at different resolutions like, for instance,
in the double-gyre problem (e.g.~\cite{SB2015}), and this makes lower-resolution solutions much harder to parameterise, since the 
parameterisation should compensate not only for the information lost because of coarse-graining, but also for the missing physical
effects. For example, in the channel flow, the backscatter mechanism (e.g,~\cite{SB2016}) at low resolutions is very weak, and 
thus it is not capable of cascading energy up to larger scales to maintain the jets. 

In the following, we compare the dependence of the performance of the various procedures discussed above on the following parameters: the resolution of the signal grid, the number of observations (also referred to as weather stations), and the size of the data assimilation step. The methods will be applied to the parameterised QG model~\eqref{eq:SLTpv} which has been studied at length in~\cite{CCHWS2019_2}.

Before going into further details, we remind the reader how we compute the true solution, which is denoted 
as $q^a$, and also referred to as \textit{the truth} or the \textit{the true state}. For the purpose of this paper,
we have computed two versions of the true solution $q^a$. 
The first one is computed on a signal grid $G_s=257\times129$ ($dx\approx dy\approx 15\, {\rm km}$),
and the other one is computed on a signal grid $G_s=129\times65$ ($dx\approx dy\approx 299\, {\rm km}$) (Fig.~\ref{fig:qf_qa_qc_qam_qcm_mu1D-8_257x129}).
Each true solution is computed as the solution of the elliptic equation~\eqref{eq:q_psi} with the stream function $\psi^a$, 
where $\psi^a$ is computed by spatially averaging the high-resolution stream function $\psi^f$ (computed on the fine grid 
$G_f=2049\times1025,\, dx\approx dy\approx 1.9\, {\rm km}$) over the coarse grid cell $G^s$.
From now on, we focus only on the first layer, since the flow in the first layer 
is more energetic and exhibits an interesting dynamics including small-scale vortices and large-scale zonally elongated jets.

\begin{figure}[H]
\centering
\begin{tabular}{ccc}
 & \begin{minipage}{0.225\textwidth}\hspace*{0.5cm}$G_s=257\times129$\end{minipage} & \begin{minipage}{0.225\textwidth}\hspace*{1.5cm}$G_s=129\times65$\end{minipage}\\
\begin{minipage}{0.02\textwidth}$q^a_1$\end{minipage} & \hspace*{-2.325cm}\begin{minipage}{0.225\textwidth}\includegraphics[width=5cm]{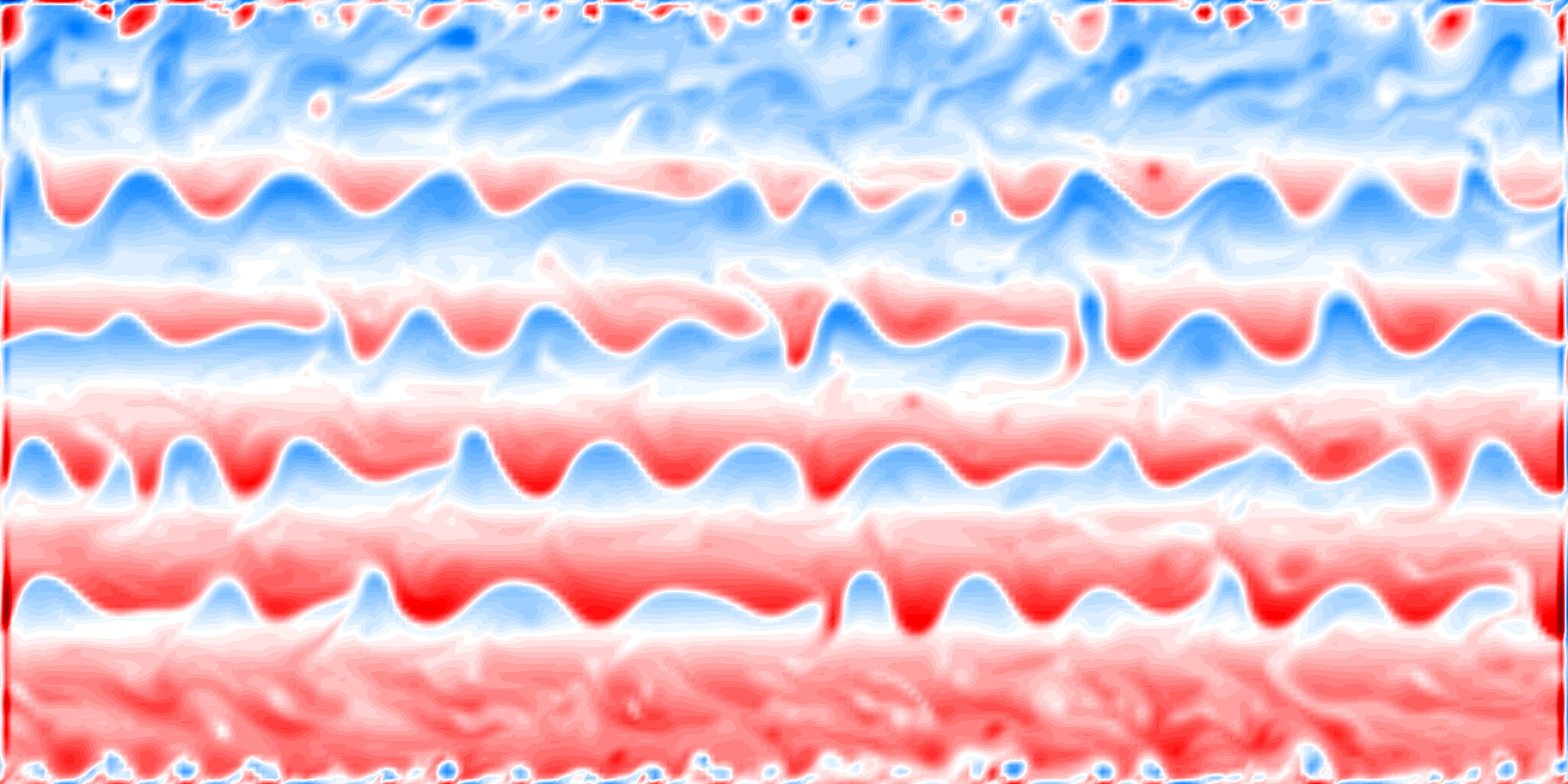}\end{minipage}
& \begin{minipage}{0.225\textwidth}\includegraphics[width=5cm]{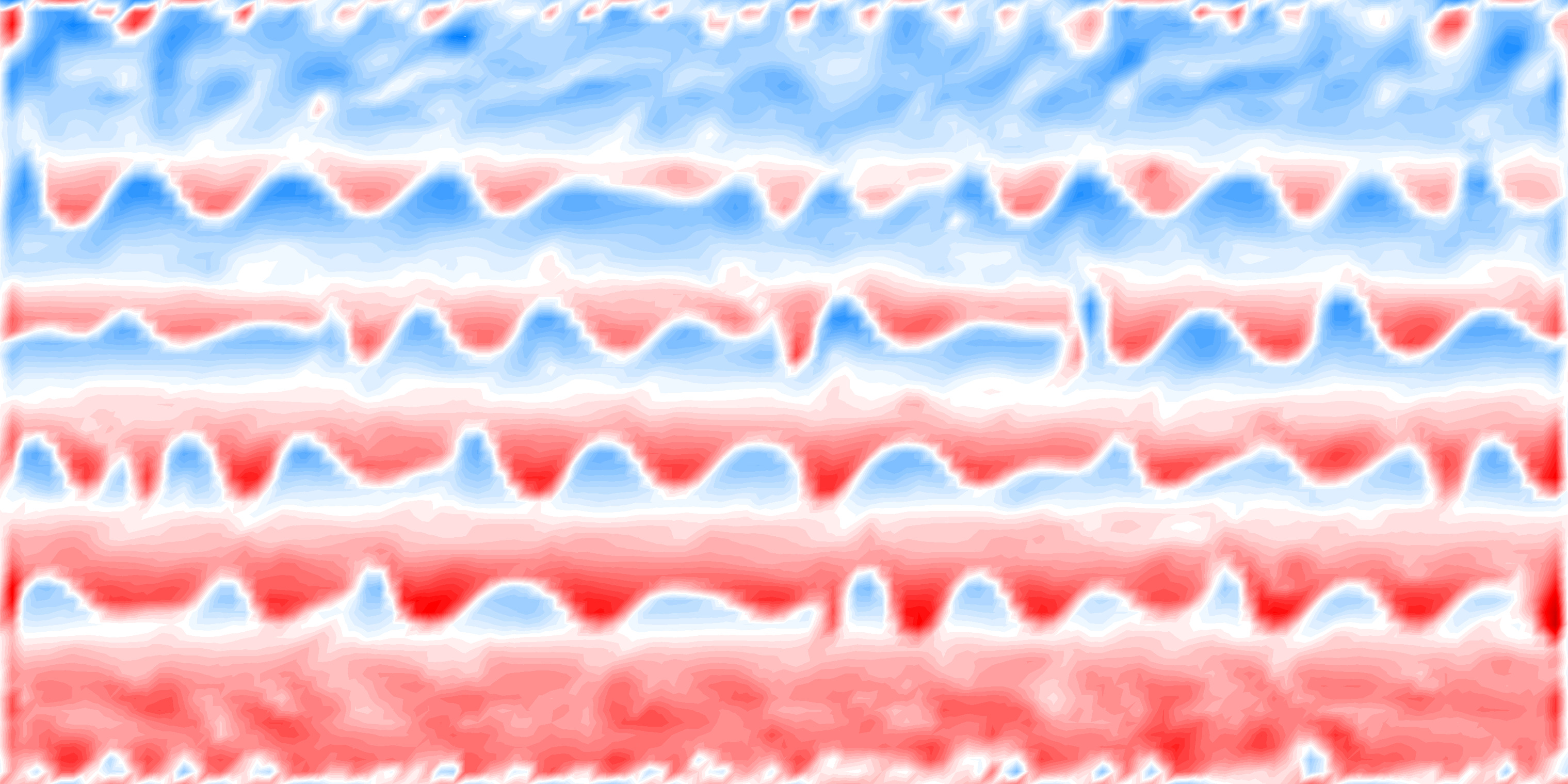}\end{minipage}\\
\\[-0.25cm]
\multicolumn{3}{c}{\hspace*{0.75cm}\includegraphics[width=6cm,height=0.5cm]{figs/colorbar_bwr_mD-5_pD-5.jpg}}\\
\end{tabular}
\caption{Shown is the true solution $q^a$ in the first layer computed on the grids 
$G_s=257\times129$ (left) and $G_s=129\times65$ (right).
All the fields are given in units of $[s^{-1}f^{-1}_0]$,  where $f_0=0.83\times10^{-4}\, {\rm s^{-1}}$ is the Coriolis parameter.}
\label{fig:qf_qa_qc_qam_qcm_mu1D-8_257x129}
\end{figure}

In order to study how the number of weather stations influences the accuracy of data assimilation methods
we will consider two different setups including $M=16$ and $M=32$ weather stations. Clearly, the location of weather stations can be optimized in such a way so as to give the most accurate data assimilation
results. For the purpose of this work,  we locate the weather stations at the nodes of 
equidistant Eulerian grids $G_d=\{4\times4,8\times4\}$ shown in Fig.~\ref{fig:ws_location}.
In all numerical experiments we use $N=100$ particles (ensemble members) and $K=32$ (the number of $\xi$'s);
the choice of these parameters has been justified in~\cite{CCHWS2019_2}.
It is worth noting that the initial conditions for the stochastic model 
have been computed over the spin up period $T_{\rm spin}=[-8,0]$ hours. The method of computing physically consistent
initial conditions for the stochastic model is given in~\cite{CCHWS2019_2}.

\begin{figure}[H]
\centering
\begin{tabular}{c}
\hspace*{-2cm}\begin{minipage}{0.26\textwidth}\includegraphics[width=7cm]{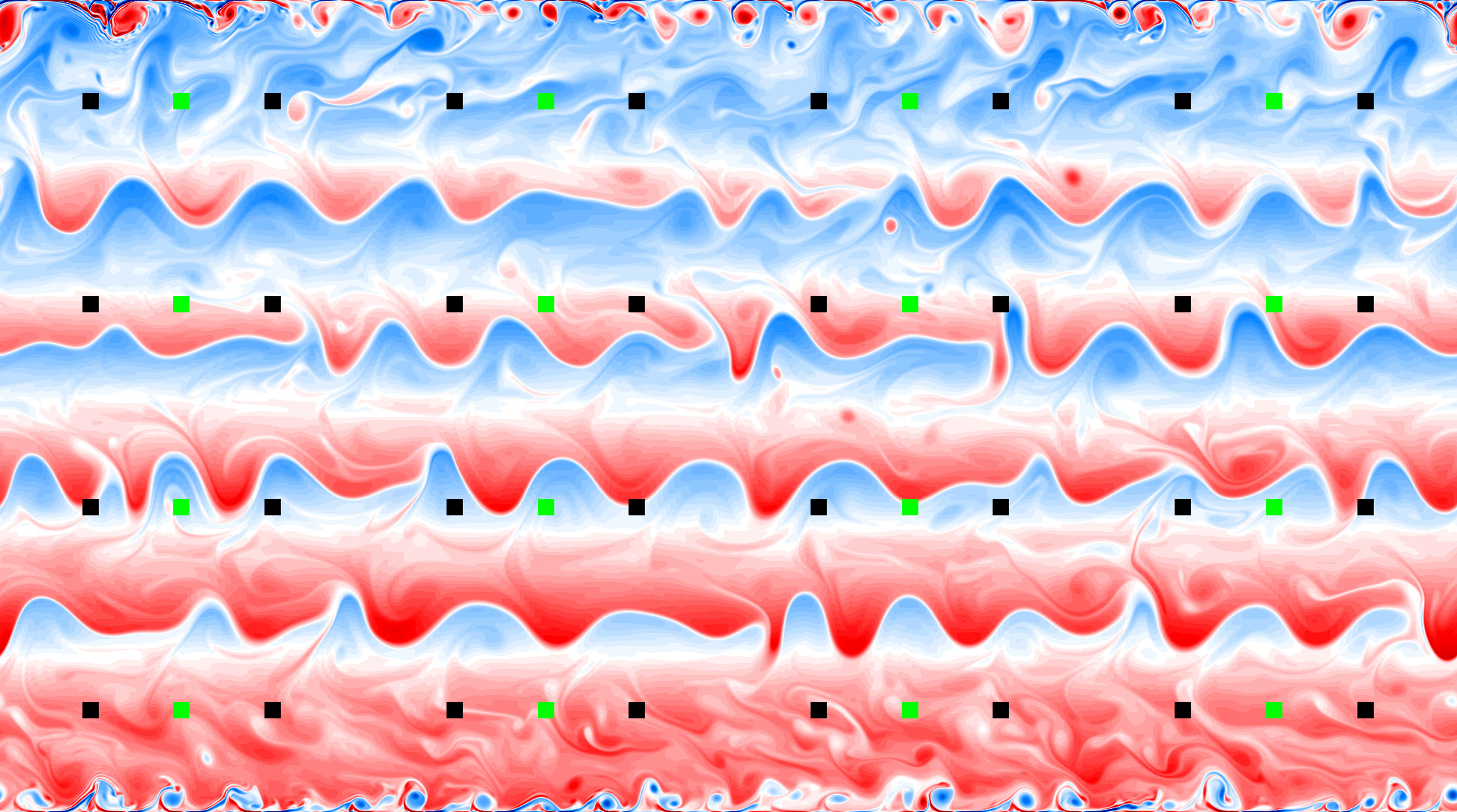}\end{minipage}\\
\\[-0.25cm]
\multicolumn{1}{c}{\hspace*{0.75cm}\includegraphics[width=6cm,height=0.5cm]{figs/colorbar_bwr_mD-5_pD-5.jpg}}\\
\end{tabular}
\caption{Snapshot of PV anomaly $q^f_1$ and location of weather stations for the data grids $G_d=\{4\times4\}$ (green squares)
and $G_d=\{8\times4\}$ (black squares);
the PV anomaly field is given in units of $[s^{-1}f^{-1}_0]$,  where $f_0=0.83\times10^{-4}\, {\rm s^{-1}}$ is the Coriolis parameter.}
\label{fig:ws_location}
\end{figure}
\subsection{Tempering and jittering}
\label{sec:temp_jitt}
We start with the study to the data assimilation algorithm that uses tempering and jittering, but not nudging (Algorithm 4). 
In the first experiment we run the stochastic QG model at the coarsest resolution ($G_s=129\times65$) and use $M=16$ weather stations. We compare the results of the data assimilation methodology with the forward run of the stochastic model. For this experiment
we take the data assimilation step to be $\Delta t=4$ hours (the data assimilation step is time between to consecutive analysis cycles.

The results are presented in Fig.~\ref{fig:av_relerr_v_129x65} in terms of the relative bias (RB) and the ensemble mean $l_2$-norm relative error (EME) given by
\begin{eqnarray}
RB(\mathbf{u}^a,\mathbf{u}^p)&:=&\frac{\|\mathbf{u}^a-\bar{\mathbf{u}}^p\|_2}{\|\mathbf{u}^a\|_2}, \\
EME(\mathbf{u}^a,\mathbf{u}^p)&:=&\frac{1}{N}\sum\limits^N_{n=1}\frac{\|\mathbf{u}^a-\mathbf{u}^p_n\|_2}{\|\mathbf{u}^a\|_2}, 
\label{eq:l2_re_av_ensemble}
\end{eqnarray}
with $\mathbf{u}^p_n$ being the $n$-th member of the stochastic ensemble, $\mathbf{u}^a$ is the true solution,
and $\displaystyle\bar{\mathbf{u}}^p:=\frac{1}{N}\sum\limits^N_{n=1}\mathbf{u}^p_n$.

\begin{figure}
\centering
\begin{tabular}{ccc}
& \begin{minipage}{0.45\textwidth}\hspace*{1.4cm}\bf (a): EME for all weather stations\end{minipage} & \begin{minipage}{0.45\textwidth}\hspace*{1.4cm}\bf (b): EME for the whole domain\end{minipage}\\
\begin{minipage}{0.02\textwidth}\rotatebox{90}{EME}\end{minipage} & 
\hspace*{-0.25cm}\begin{minipage}{0.45\textwidth}\includegraphics[width=7.5cm]{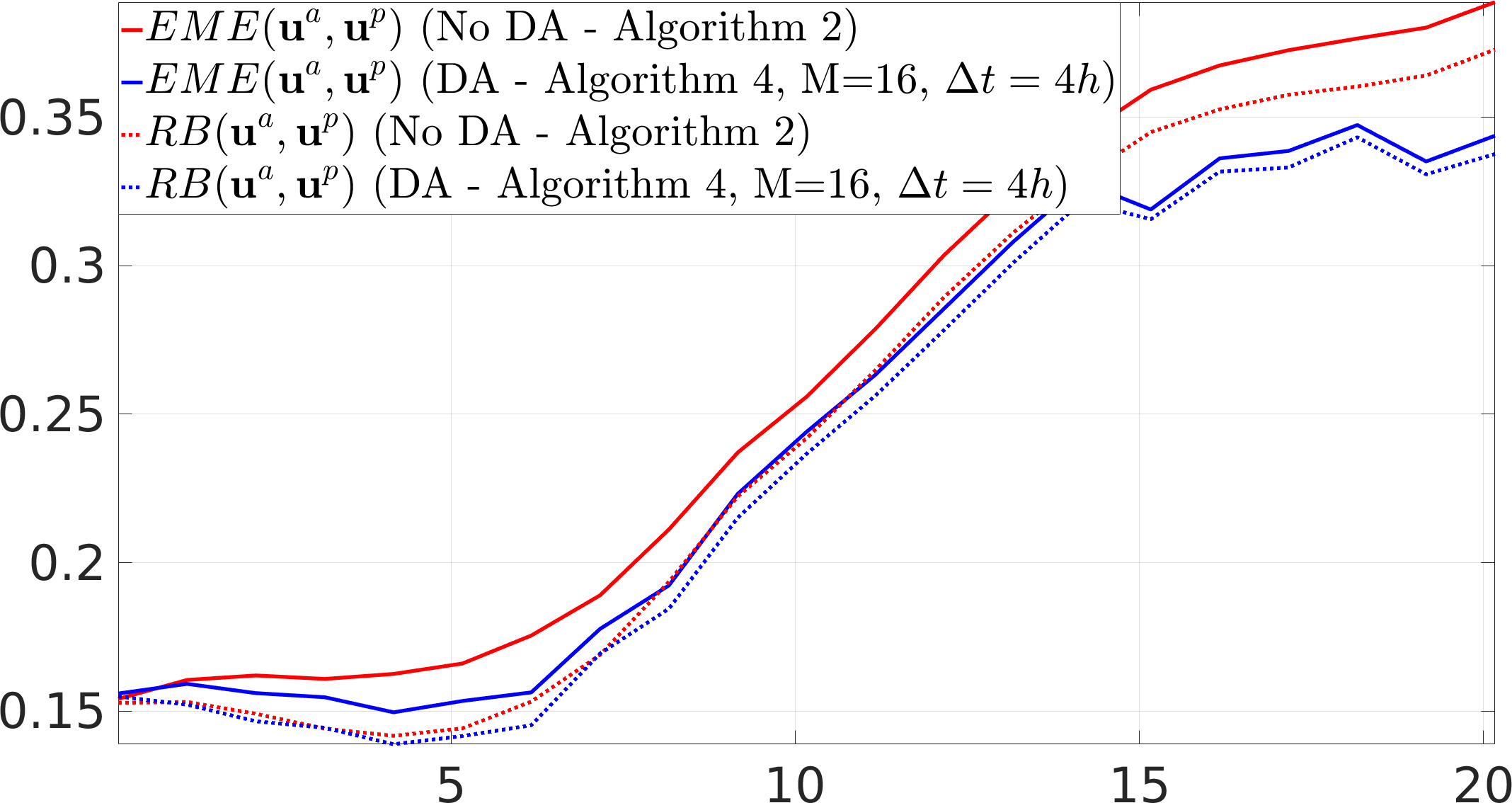}\end{minipage} &
\hspace*{-0.25cm}\begin{minipage}{0.45\textwidth}\includegraphics[width=7.5cm]{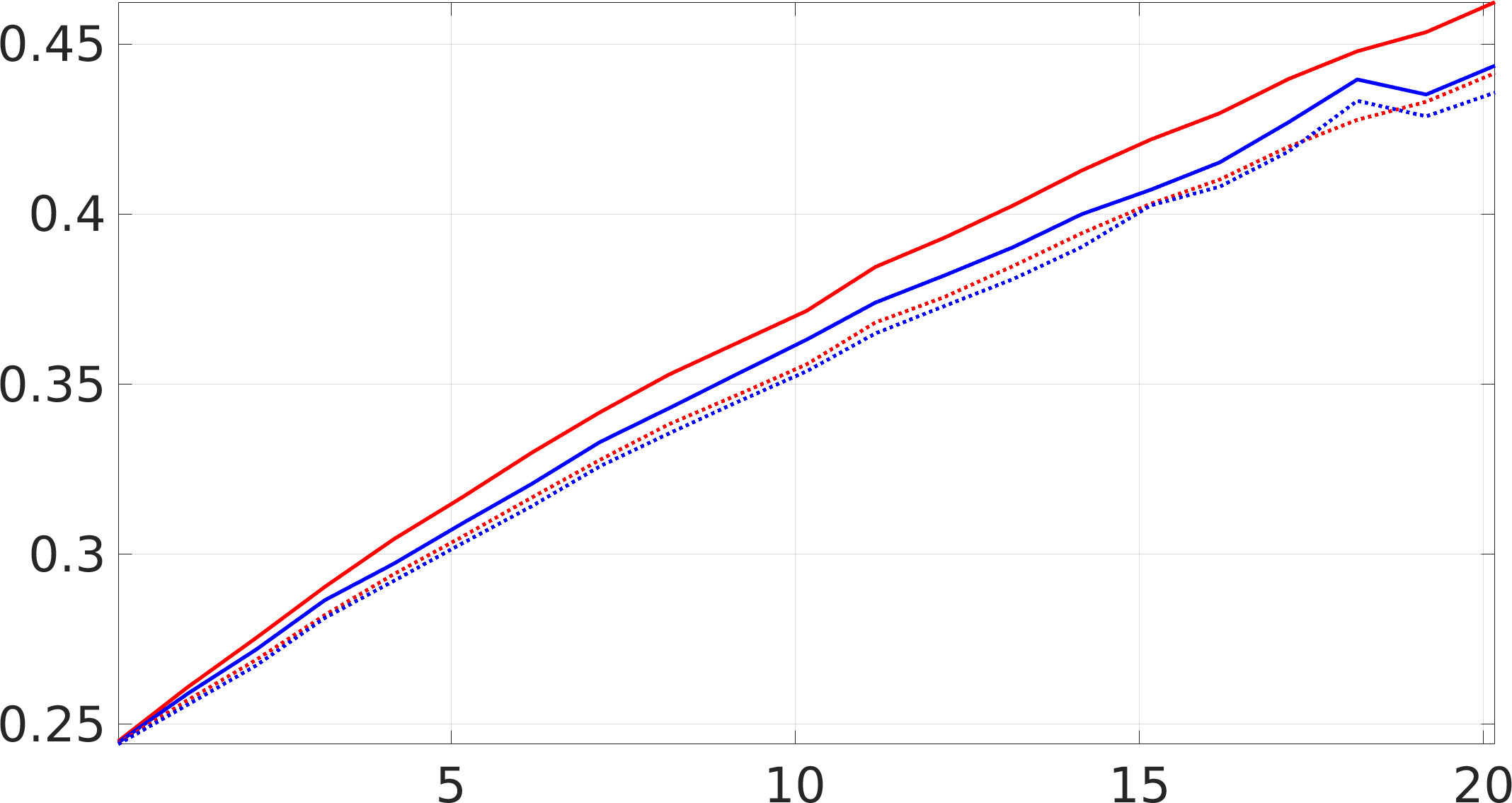}\end{minipage}\\
& &\\[-0.25cm]
&\hspace*{0.5cm}\begin{minipage}{0.1\textwidth}t {\rm [days]}\end{minipage} &\hspace*{0.5cm}\begin{minipage}{0.1\textwidth}t {\rm [days]}\end{minipage}
\end{tabular}
\caption{Evolution of the ensemble mean relative $l_2$-norm error (EME) and relative bias (RB) for \textbf{(a)} all weather stations and \textbf{(b)} the whole domain; $\mathbf{u}^a$ is the true solution, $\mathbf{u}^p$ is the stochastic solution.
In order to assimilate data we use \textit{tempering and jittering (Algorithm 4)}; 
the data is assimilated from $M=16$ weather stations every 4 hours; the grid size is $G_s=129\times65$.}
\label{fig:av_relerr_v_129x65}
\end{figure}

As Fig.~\ref{fig:av_relerr_v_129x65} shows, the data assimilation method presented by Algorithm 4 (blue line)
offers little improvement over the SPDE run without data assimilation methodology (red line) both in terms of the relative bias and in terms of the EME 
at the weather stations (Fig.~\ref{fig:av_relerr_v_129x65}a)
and in the whole domain (Fig.~\ref{fig:av_relerr_v_129x65}b) throughout the time period of 20 days. 
As we will see later, the situation will improve as we decrease the data assimilation step and/or increase the resolution of the signal grid.
But before, let us first look at the  uncertainty quantification results for this particular setting. As expected, the spread for the stochastic QG model~\eqref{eq:SLTpv} decreases (to a certain extend) after the application of the data assimilation methodology. We illustrate the shrinkage of the stochastic spreads in Fig.~\ref{fig:spread_129x65_16ws_4h} for the velocity computed
at weather stations located in the slow flow region (blue stripes)
and fast flow region (red jets) in Fig.~\ref{fig:ws_location}.

\begin{figure}[H]
\centering
\hspace*{1.0cm}
\begin{tabular}{cc}
\hspace*{0cm}\begin{minipage}{0.1\textwidth} \large$u^p_1$ \end{minipage} &\hspace*{0cm}\begin{minipage}{0.1\textwidth} \large$v^p_1$ \end{minipage}\\
&\\[-0.35cm]
\hspace*{-0.25cm}\begin{minipage}{0.45\textwidth}\includegraphics[scale=0.25]{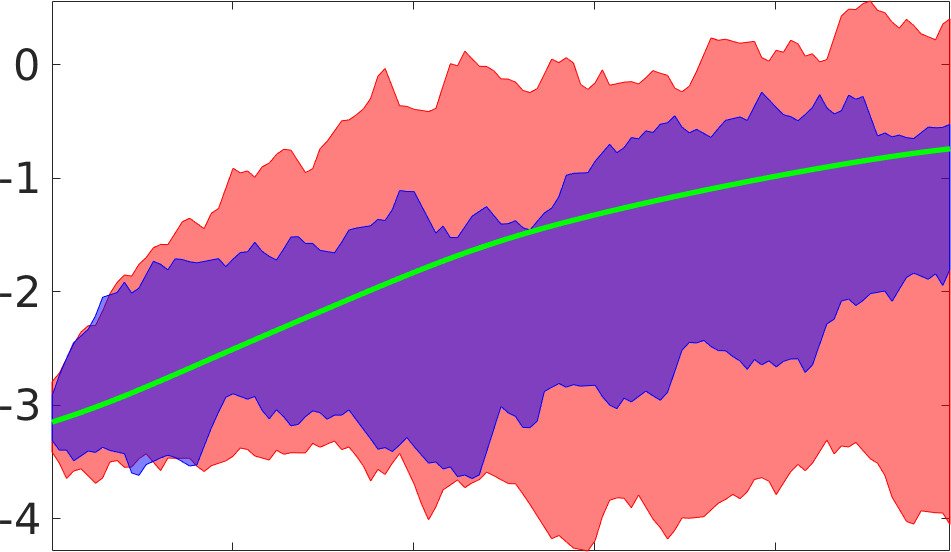}\end{minipage} &
\hspace*{-0.25cm}\begin{minipage}{0.45\textwidth}\includegraphics[scale=0.25]{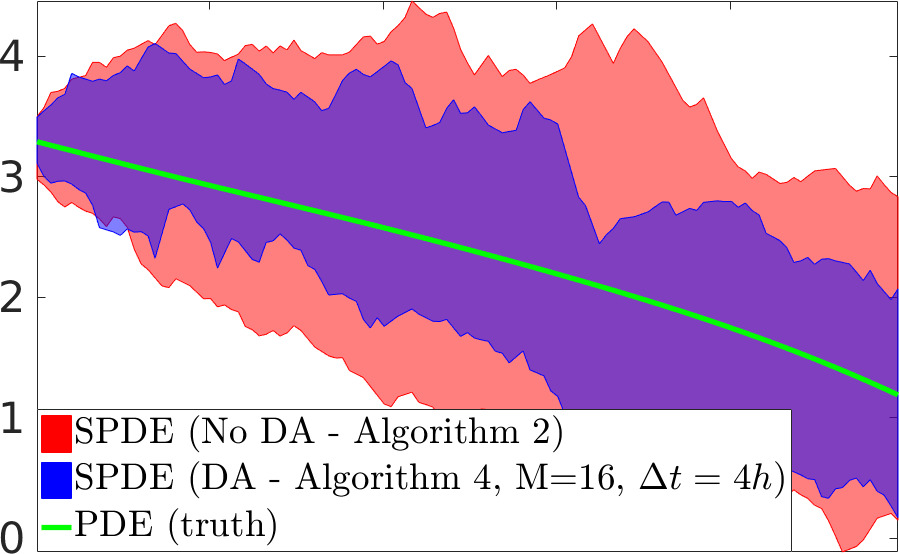}\end{minipage}\\
&\\[-0.25cm]
\hspace*{-0.75cm}\begin{minipage}{0.45\textwidth}\includegraphics[scale=0.25]{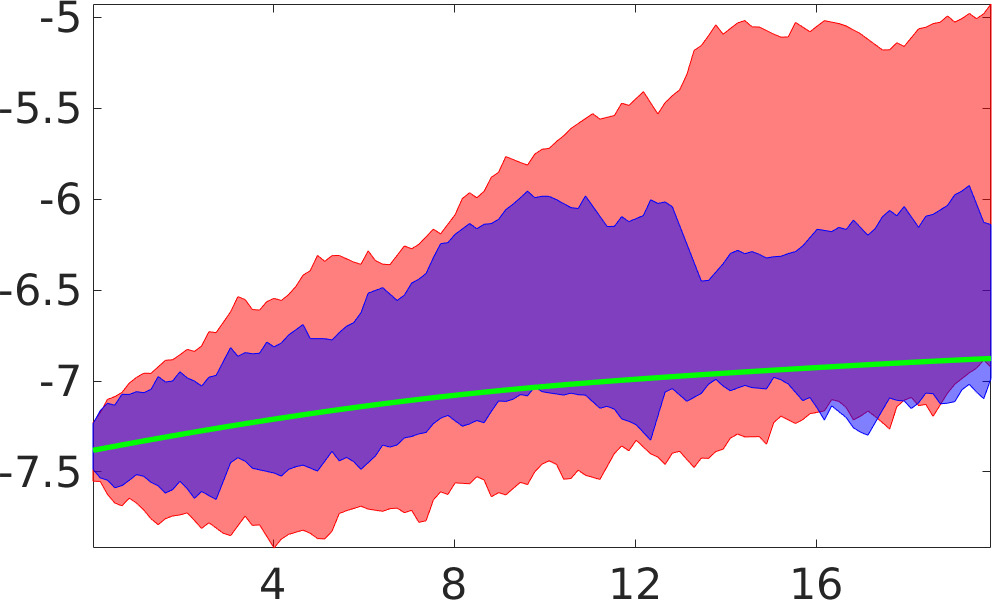}\end{minipage} &
\hspace*{-1cm}\begin{minipage}{0.45\textwidth}\includegraphics[scale=0.25]{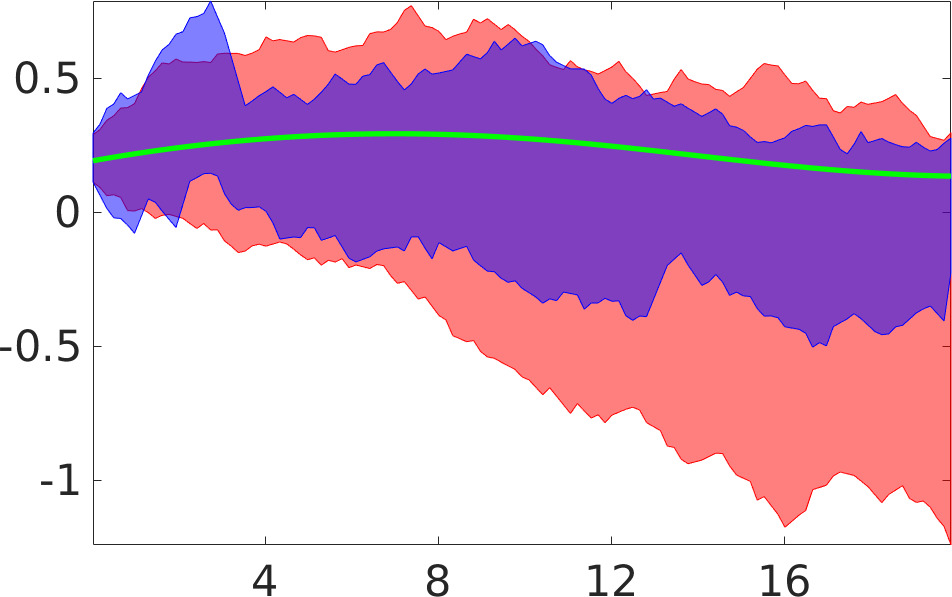}\end{minipage}\\
&\\[-0.25cm]
\hspace*{0cm}\begin{minipage}{0.1\textwidth}t {\rm [days]}\end{minipage} &\hspace*{-0.5cm}\begin{minipage}{0.1\textwidth}t {\rm [days]}\end{minipage}
\end{tabular}
\caption{Shown are typical stochastic spreads for velocity $\mathbf{u}^p_1=(u^p_1,v^p_1)$ at weather stations
located in the slow flow region (upper row) and fast flow region (lower row); the grid size is $G_s=129\times65$.
}
\label{fig:spread_129x65_16ws_4h}
\end{figure}

Fig.~\ref{fig:spread_129x65_16ws_4h} shows that the truth (green line) is contained within the stochastic spread 
computed with and without the data assimilation method. Moreover, the spread computed with the data assimilation method (blue spread)
is narrower than that computed only with the SPDE (red spread). To reduce it further, one can vary the data assimilation step $\Delta t$. In particular, halving $\Delta t$ brings further reduction in both RB and EME  (Fig.~\ref{fig:av_relerr_v_129x65_dt_2h})
and also reduces the uncertainty of the stochastic solution (Fig.~\ref{fig:spread_129x65_16ws_2h}) (the spread is narrower).

\begin{figure}[H]
\centering
\begin{tabular}{ccc}
& \begin{minipage}{0.45\textwidth}\hspace*{1.4cm}\bf (a): EME for all weather stations\end{minipage} & \begin{minipage}{0.45\textwidth}\hspace*{1.4cm}\bf (b): EME for the whole domain\end{minipage}\\
\begin{minipage}{0.02\textwidth}\rotatebox{90}{EME}\end{minipage} & 
\hspace*{-0.25cm}\begin{minipage}{0.45\textwidth}\includegraphics[width=7.5cm]{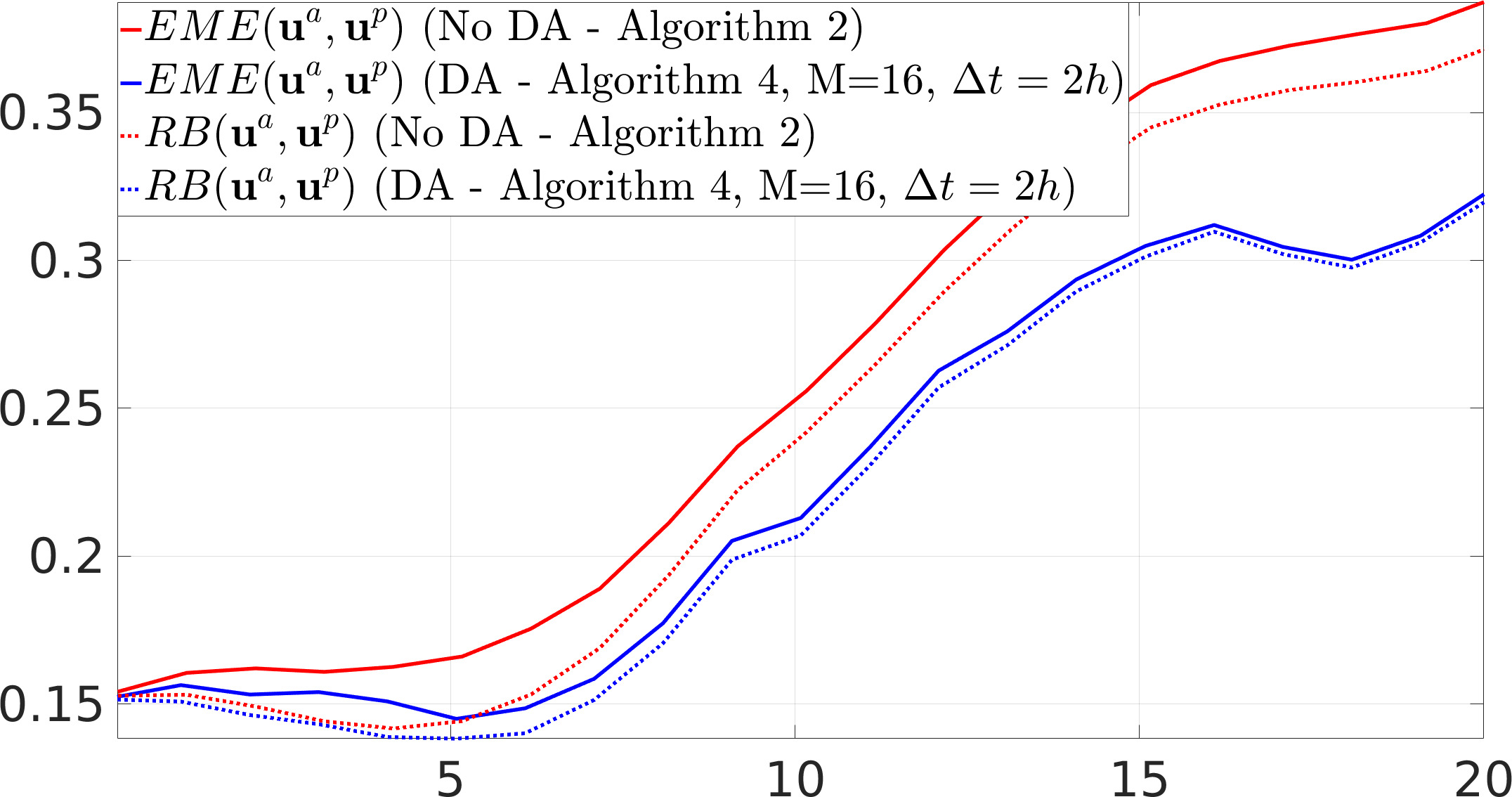}\end{minipage} &
\hspace*{-0.25cm}\begin{minipage}{0.45\textwidth}\includegraphics[width=7.5cm]{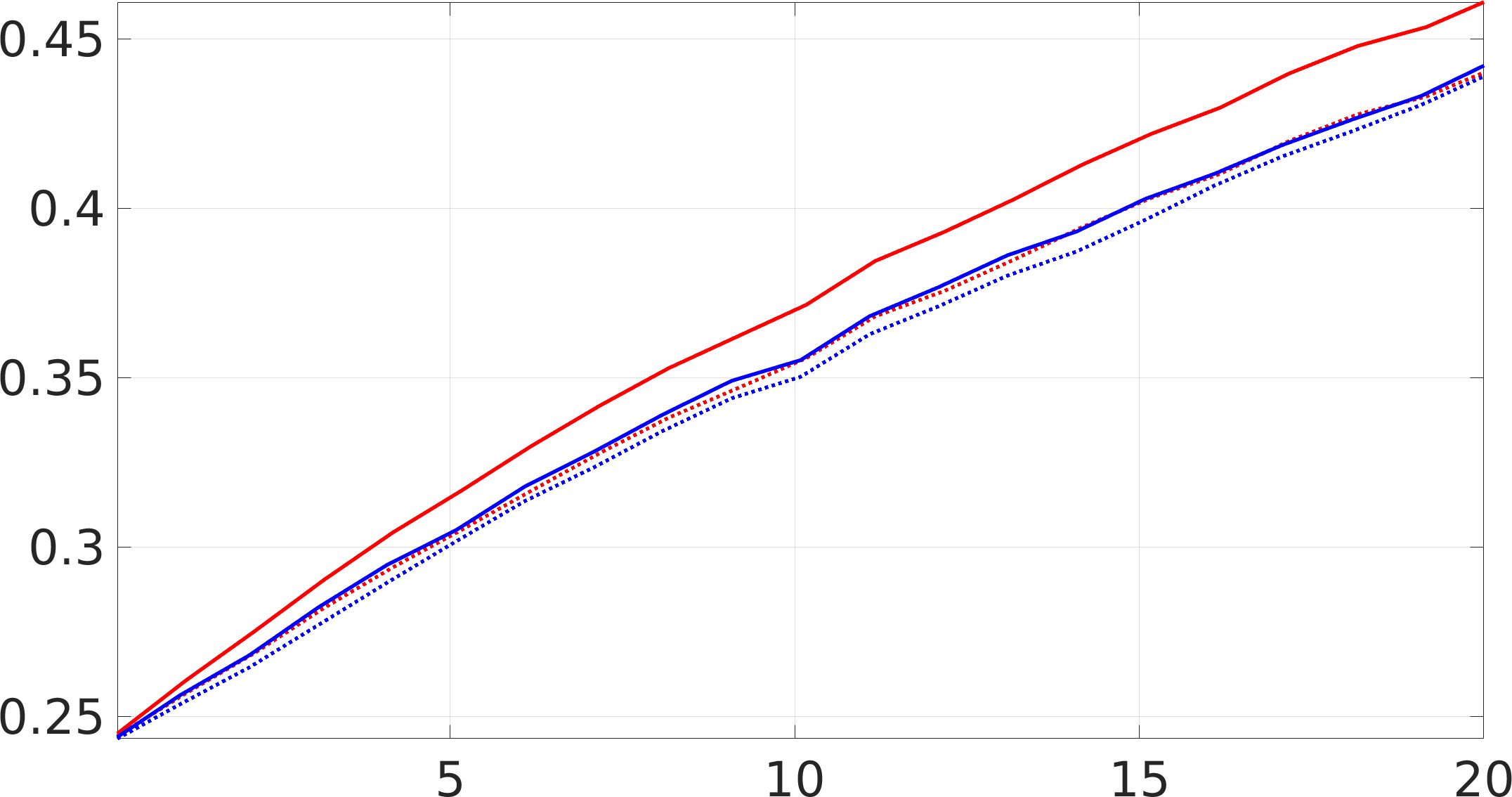}\end{minipage}\\
& &\\[-0.25cm]
&\hspace*{0.5cm}\begin{minipage}{0.1\textwidth}t {\rm [days]}\end{minipage} &\hspace*{0.5cm}\begin{minipage}{0.1\textwidth}t {\rm [days]}\end{minipage}
\end{tabular}
\caption{The same as in Fig.~\ref{fig:av_relerr_v_129x65}, but for the data assimilation step $\Delta t=2h$.}
\label{fig:av_relerr_v_129x65_dt_2h}
\end{figure}

\begin{figure}[H]
\centering
\hspace*{1.0cm}
\begin{tabular}{cc}
\hspace*{0cm}\begin{minipage}{0.1\textwidth} \large$u^p_1$ \end{minipage} &\hspace*{0cm}\begin{minipage}{0.1\textwidth} \large$v^p_1$ \end{minipage}\\
&\\[-0.35cm]
\hspace*{-0.25cm}\begin{minipage}{0.45\textwidth}\includegraphics[scale=0.25]{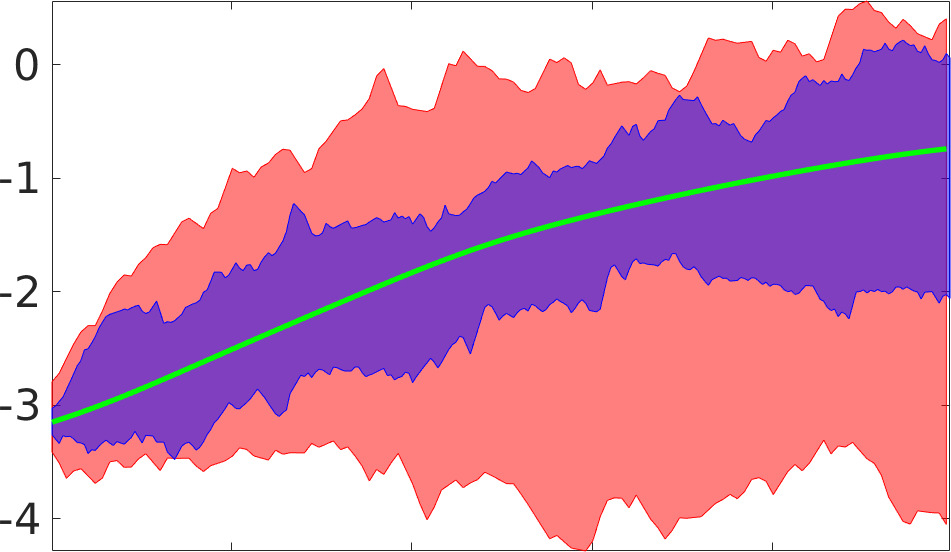}\end{minipage} &
\hspace*{-0.25cm}\begin{minipage}{0.45\textwidth}\includegraphics[scale=0.25]{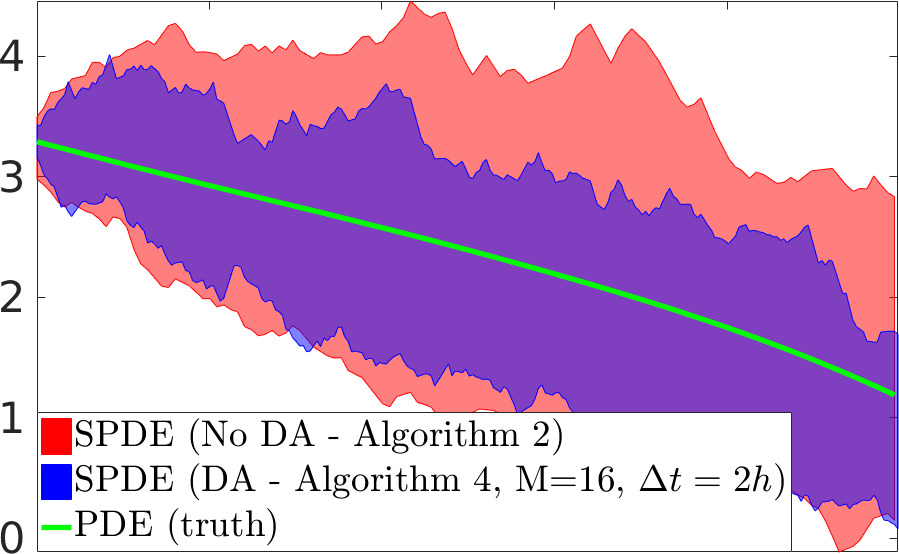}\end{minipage}\\
&\\[-0.25cm]
\hspace*{-0.75cm}\begin{minipage}{0.45\textwidth}\includegraphics[scale=0.25]{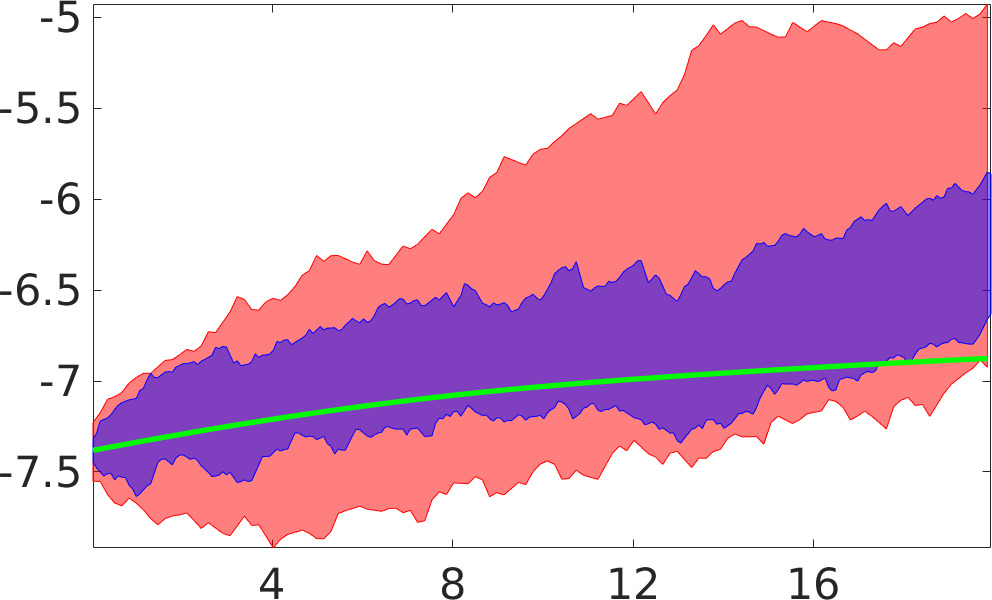}\end{minipage} &
\hspace*{-1cm}\begin{minipage}{0.45\textwidth}\includegraphics[scale=0.25]{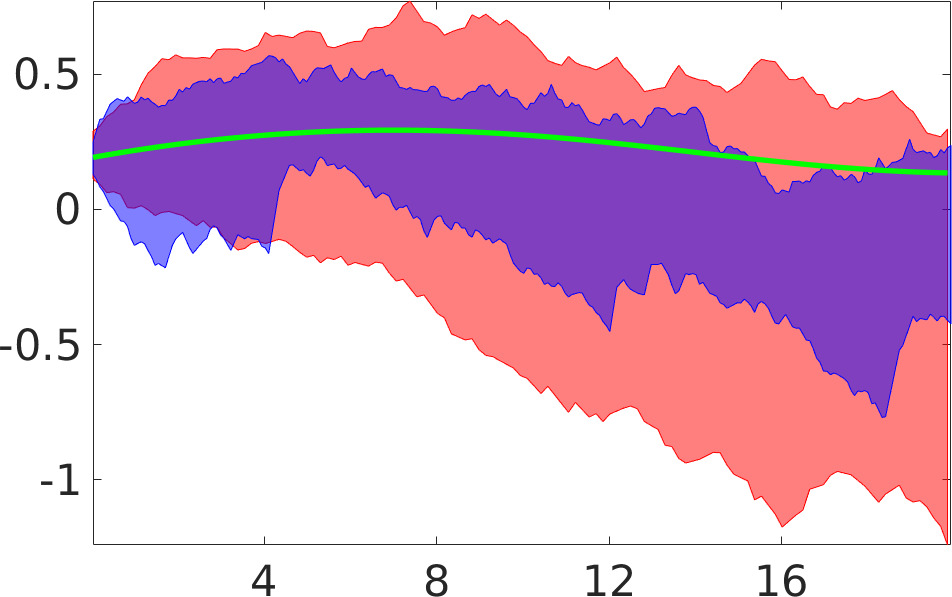}\end{minipage}\\
&\\[-0.25cm]
\hspace*{0cm}\begin{minipage}{0.1\textwidth}t {\rm [days]}\end{minipage} &\hspace*{-0.5cm}\begin{minipage}{0.1\textwidth}t {\rm [days]}\end{minipage}
\end{tabular}
\caption{The same as in Fig.~\ref{fig:spread_129x65_16ws_4h}, but for the data assimilation step $\Delta t=2h$.}
\label{fig:spread_129x65_16ws_2h}
\end{figure}

Further substantial improvements in the performance of the data assimilation methodology are obtained when the resolution of the signal grid gets 
higher ($G_s=257\times129$). In particular, the results are much more accurate both at the observation points (weather stations) (Fig.~\ref{fig:av_relerr_v_257x129}a) and over the whole domain (Fig.~\ref{fig:av_relerr_v_257x129}b). 
Moreover, the spread of the sample reduces dramatically as shown in figure  Fig.~\ref{fig:spread_257x129_16ws_4h}.

\begin{figure}[H]
\centering
\begin{tabular}{ccc}
& \begin{minipage}{0.45\textwidth}\hspace*{1.4cm}\bf (a): EME for all weather stations\end{minipage} & \begin{minipage}{0.45\textwidth}\hspace*{1.4cm}\bf (b): EME for the whole domain\end{minipage}\\
\begin{minipage}{0.02\textwidth}\rotatebox{90}{EME}\end{minipage} & 
\hspace*{-0.25cm}\begin{minipage}{0.45\textwidth}\includegraphics[width=7.5cm]{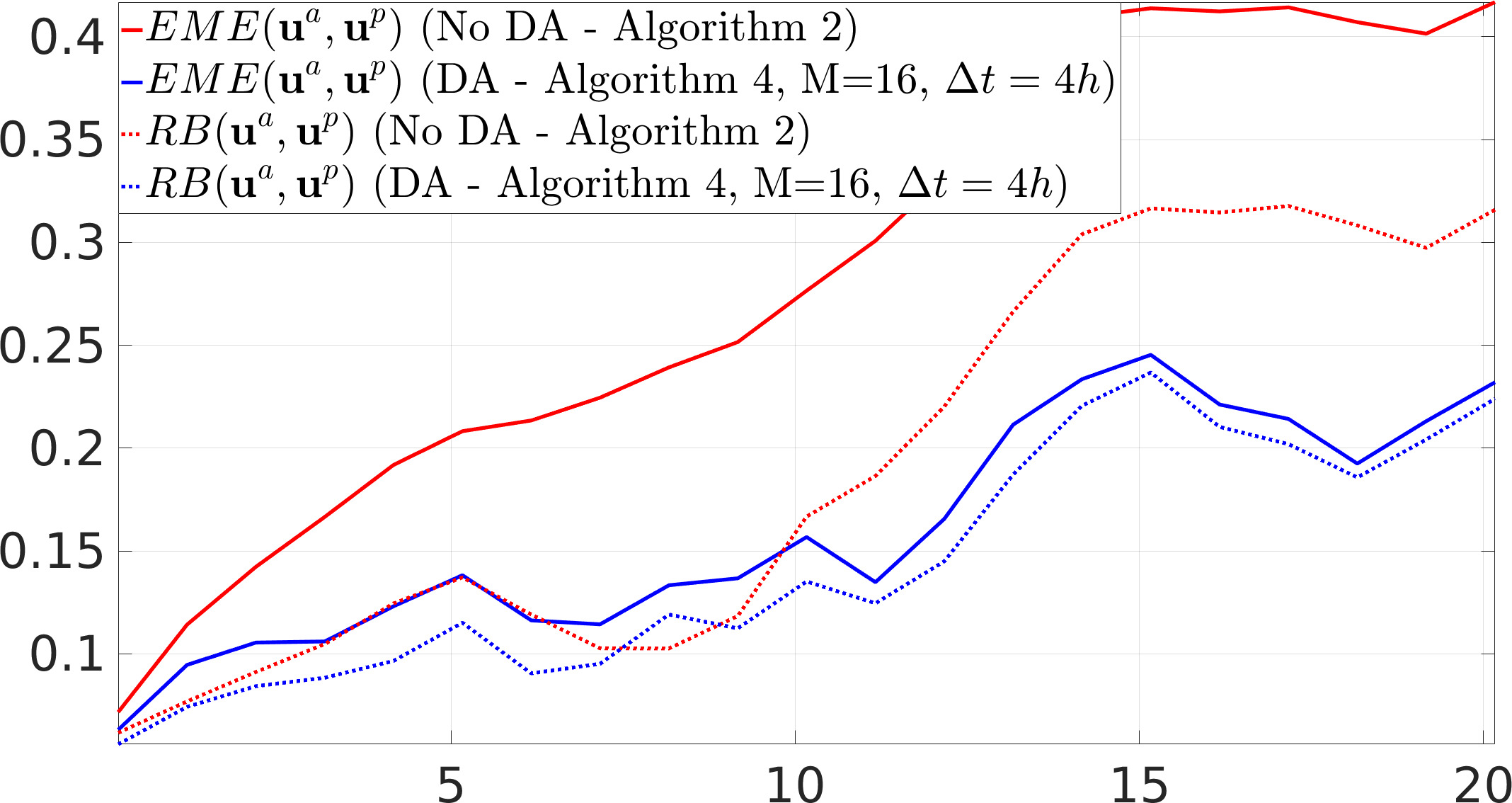}\end{minipage} &
\hspace*{-0.25cm}\begin{minipage}{0.45\textwidth}\includegraphics[width=7.5cm]{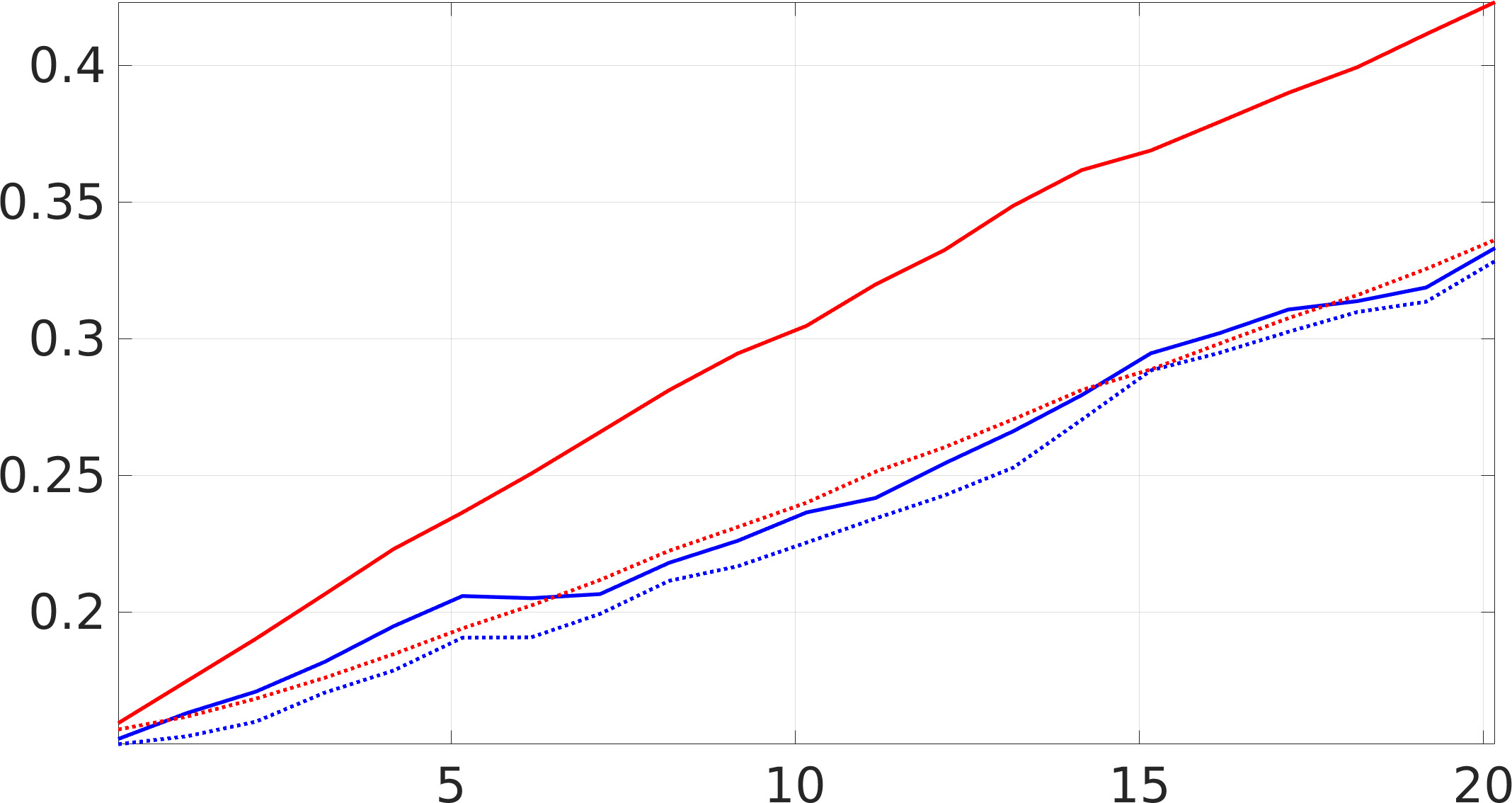}\end{minipage}\\
& &\\[-0.25cm]
&\hspace*{0.5cm}\begin{minipage}{0.1\textwidth}t {\rm [days]}\end{minipage} &\hspace*{0.5cm}\begin{minipage}{0.1\textwidth}t {\rm [days]}\end{minipage}
\end{tabular}
\caption{The same as in Fig.~\ref{fig:av_relerr_v_129x65}, but for the signal grid $G_s=257\times129$.}
\label{fig:av_relerr_v_257x129}
\end{figure}

\begin{figure}[H]
\centering
\hspace*{1.0cm}
\begin{tabular}{cc}
\hspace*{0cm}\begin{minipage}{0.1\textwidth} \large$u^p_1$ \end{minipage} &\hspace*{0cm}\begin{minipage}{0.1\textwidth} \large$v^p_1$ \end{minipage}\\
&\\[-0.35cm]
\hspace*{-0.25cm}\begin{minipage}{0.45\textwidth}\includegraphics[scale=0.25]{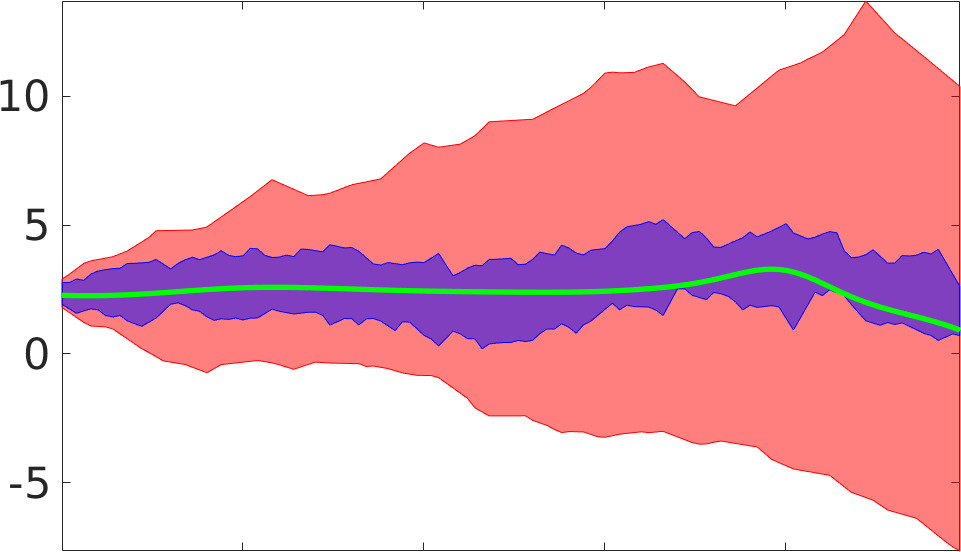}\end{minipage} &
\hspace*{-0.25cm}\begin{minipage}{0.45\textwidth}\includegraphics[scale=0.25]{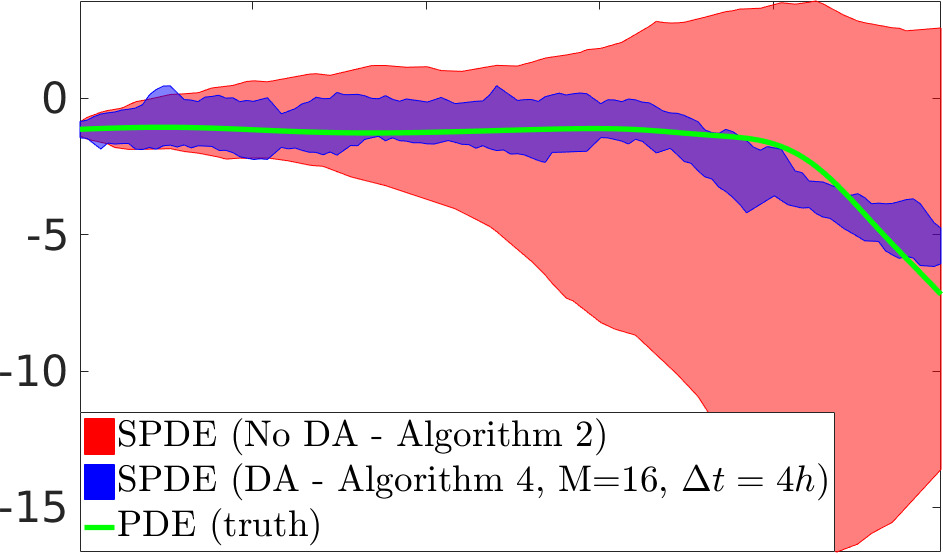}\end{minipage}\\
&\\[-0.25cm]
\hspace*{-0.25cm}\begin{minipage}{0.45\textwidth}\includegraphics[scale=0.25]{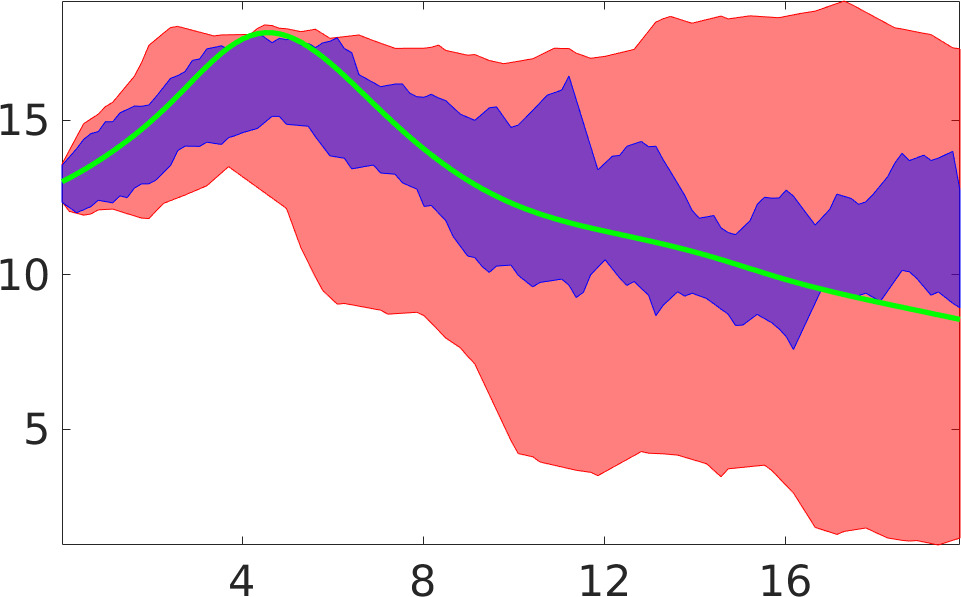}\end{minipage} &
\hspace*{-0.25cm}\begin{minipage}{0.45\textwidth}\includegraphics[scale=0.25]{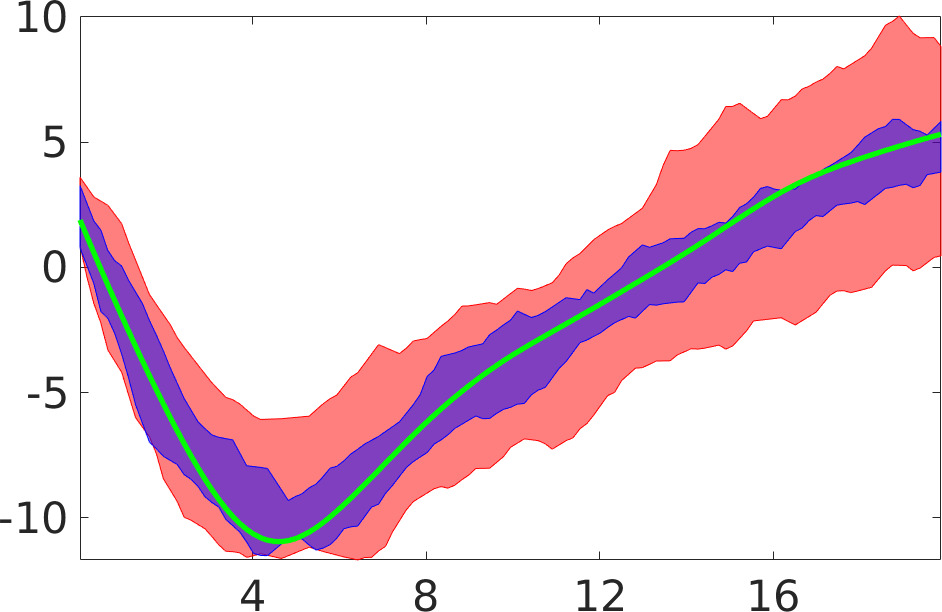}\end{minipage}\\
&\\[-0.25cm]
\hspace*{0cm}\begin{minipage}{0.1\textwidth}t {\rm [days]}\end{minipage} &\hspace*{-0.5cm}\begin{minipage}{0.1\textwidth}t {\rm [days]}\end{minipage}
\end{tabular}
\caption{The same as in Fig.~\ref{fig:spread_129x65_16ws_4h}, but for the signal grid $G_s=257\times129$.}
\label{fig:spread_257x129_16ws_4h}
\end{figure}

Based on the above results, we conclude that both the reduction of the data assimilation step and the increase of the resolution of the signal grid can enhance the accuracy
of the stochastic solution and reduce the spread of the stochastic ensemble. The effect of the increase of the resolution on both the 
accuracy and uncertainty appears to be much more pronounced compared to that of the reduction of the data assimilation step.

\subsection{Nudging}
\label{sec:nudging}
We will now look at the performance of the data assimilation methodology that includes nudging (Algorithm 5) compared with the one that does not (Algorithm 4). 
As above we start with the stochastic QG model at the coarsest resolution ($G_s=129\times65$) and use $M=16$ weather stations. We present the results in Figs.~\ref{fig:av_relerr_v_129x65_NDG_ws16} and~\ref{fig:spread_129x65_16ws_4h_nudging}. The improvements are obvious straightaway. 
The average of the stochastic spread becomes closer to the true solution compared with the same case but without using
the nudging procedure (Fig.~\ref{fig:av_relerr_v_129x65_NDG_ws16}).
However, in some cases, the true solution leaves the spread of the ensemble (Fig.~\ref{fig:spread_129x65_16ws_4h_nudging}).
We do not have a clear explanation for this behaviour. 

\begin{figure}[H]
\centering
\begin{tabular}{ccc}
& \begin{minipage}{0.45\textwidth}\hspace*{1.4cm}\bf (a): EME for all weather stations\end{minipage} & \begin{minipage}{0.45\textwidth}\hspace*{1.4cm}\bf (b): EME for the whole domain\end{minipage}\\
\begin{minipage}{0.02\textwidth}\rotatebox{90}{EME}\end{minipage} & 
\hspace*{-0.25cm}\begin{minipage}{0.45\textwidth}\includegraphics[width=7.5cm]{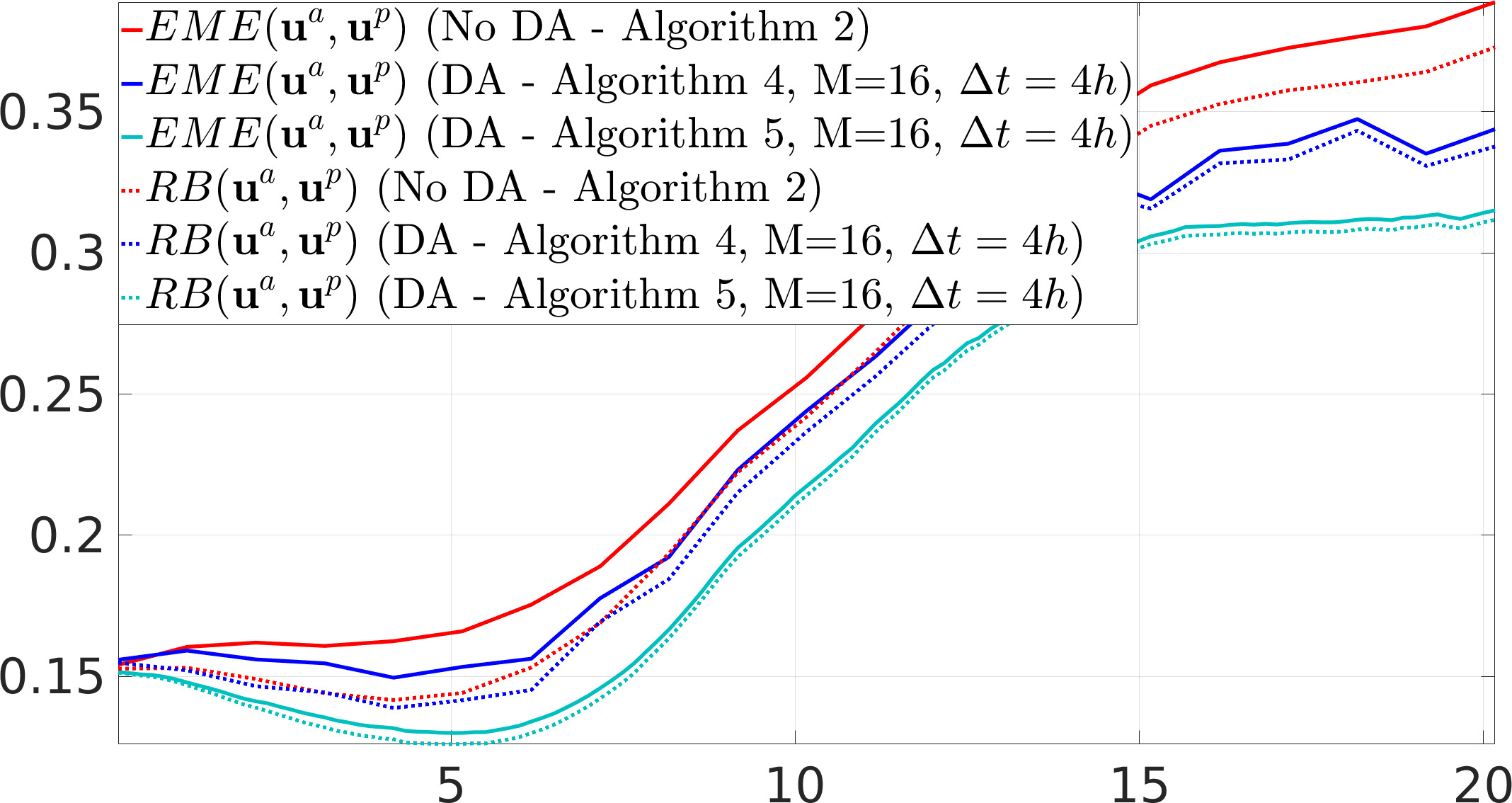}\end{minipage} &
\hspace*{-0.25cm}\begin{minipage}{0.45\textwidth}\includegraphics[width=7.5cm]{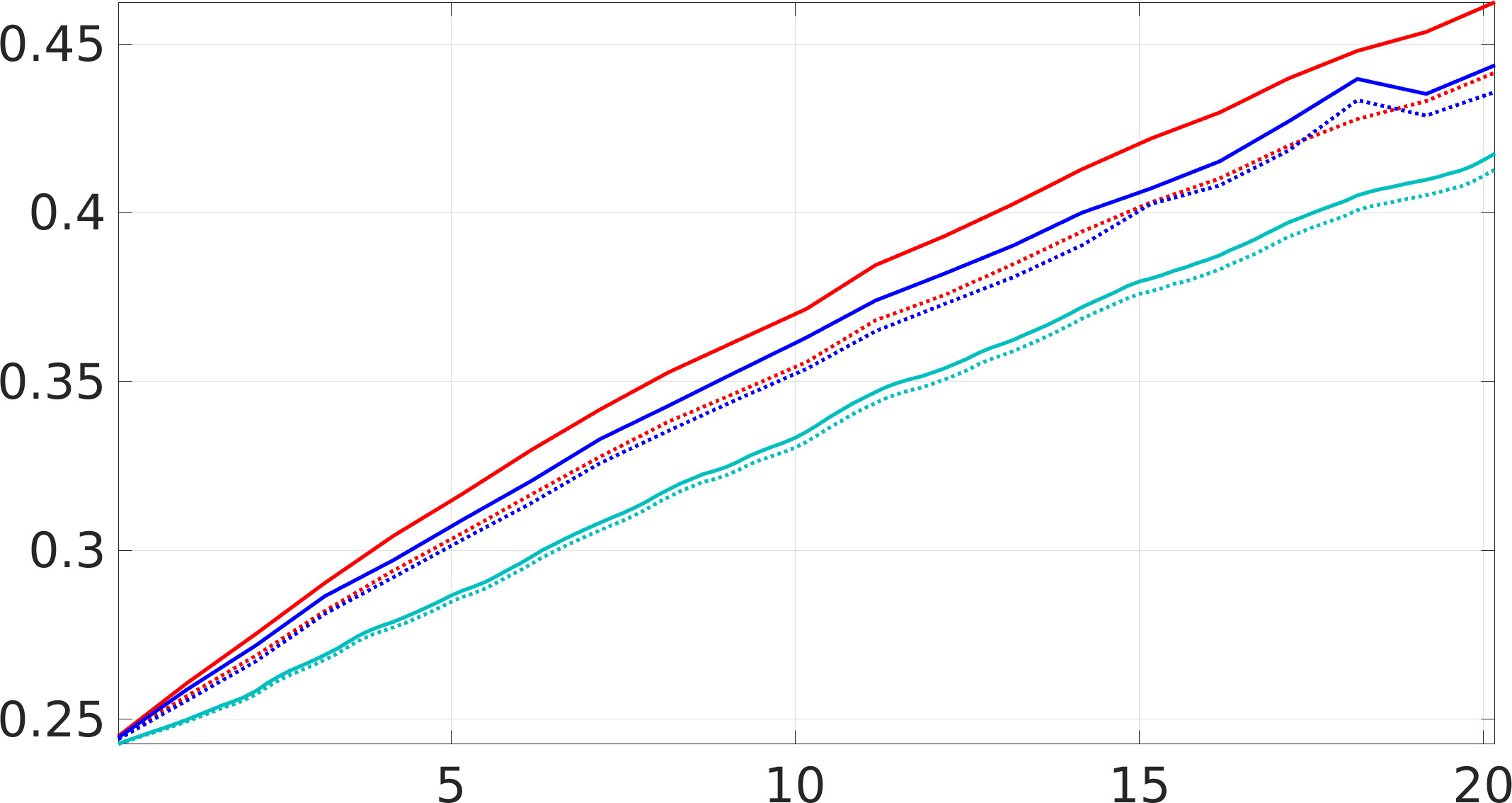}\end{minipage}\\
& &\\[-0.25cm]
&\hspace*{0.5cm}\begin{minipage}{0.1\textwidth}t {\rm [days]}\end{minipage} &\hspace*{0.5cm}\begin{minipage}{0.1\textwidth}t {\rm [days]}\end{minipage}
\end{tabular}
\caption{Evolution of the ensemble mean relative $l_2$-norm error (EME) and relative bias (RB) for \textbf{(a)} all weather stations and \textbf{(b)} the whole domain; $\mathbf{u}^a$ is the true solution, $\mathbf{u}^p$ is the stochastic solution.
In order to assimilate data we use \textit{tempering, jittering, and nudging (Algorithm 5)}; 
the data is assimilated from $M=16$ weather stations every 4 hours; the grid size is $G_s=129\times65$.}
\label{fig:av_relerr_v_129x65_NDG_ws16}
\end{figure}

\begin{figure}[H]
\centering
\hspace*{1.0cm}
\begin{tabular}{cc}
\hspace*{0cm}\begin{minipage}{0.1\textwidth} \large$u^p_1$ \end{minipage} &\hspace*{0cm}\begin{minipage}{0.1\textwidth} \large$v^p_1$ \end{minipage}\\
&\\[-0.35cm]
\hspace*{-0.25cm}\begin{minipage}{0.45\textwidth}\includegraphics[scale=0.25]{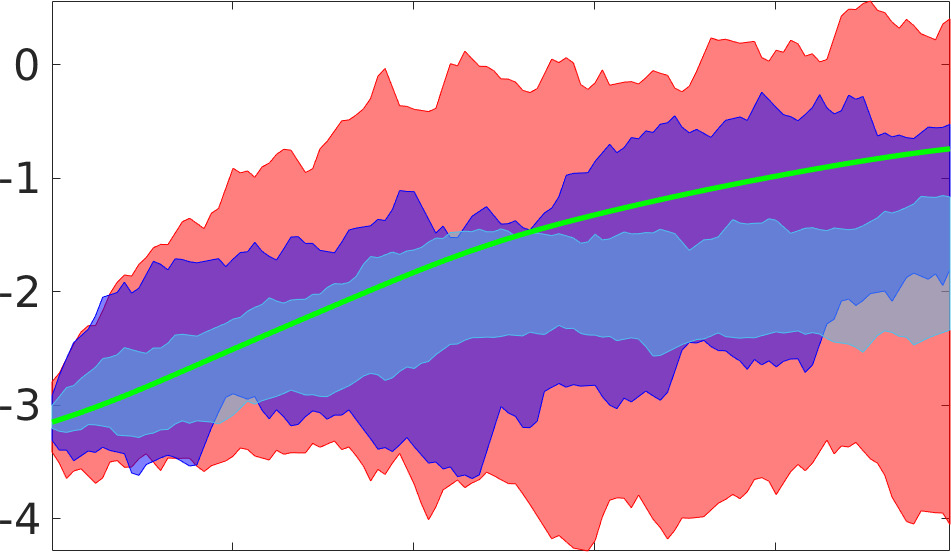}\end{minipage} &
\hspace*{-0.25cm}\begin{minipage}{0.45\textwidth}\includegraphics[scale=0.25]{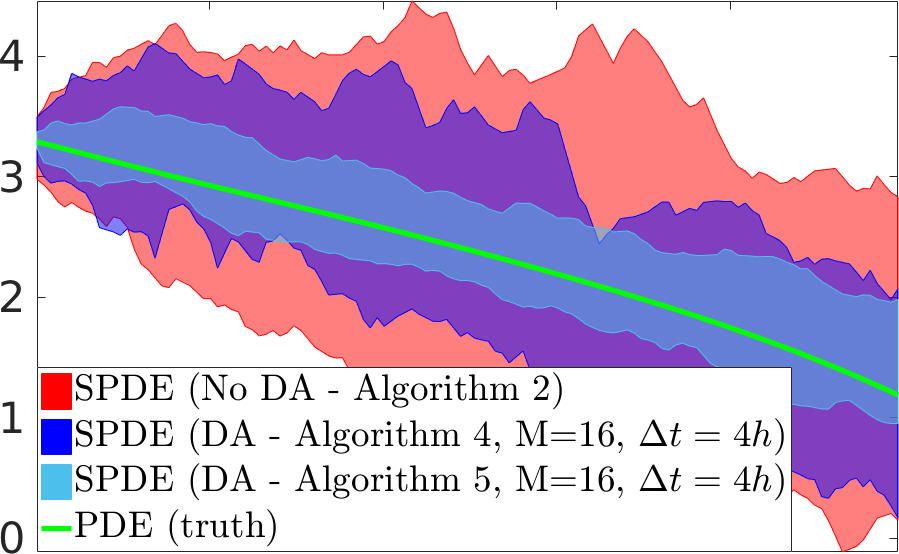}\end{minipage}\\
&\\[-0.25cm]
\hspace*{-0.75cm}\begin{minipage}{0.45\textwidth}\includegraphics[scale=0.25]{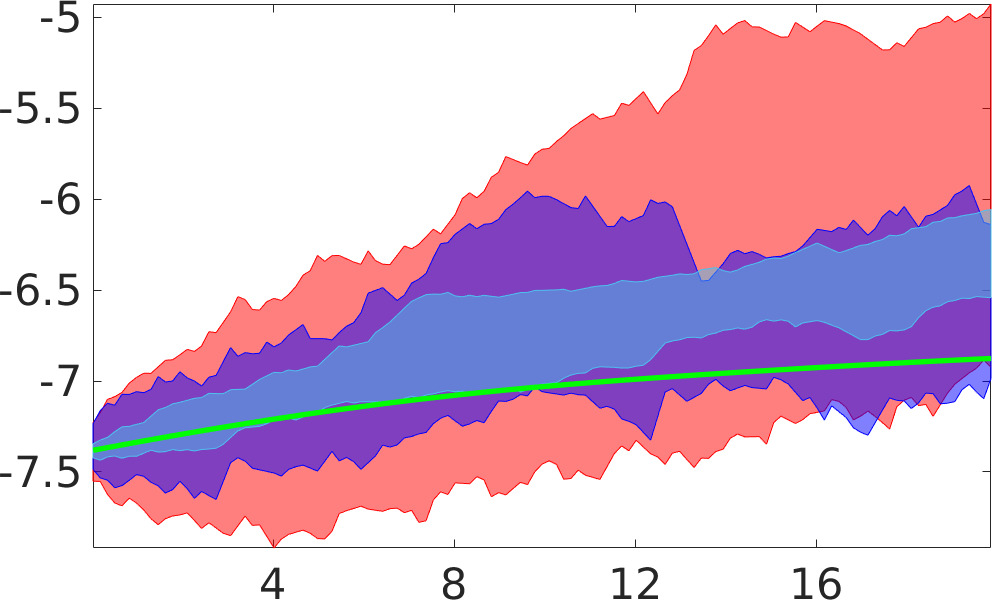}\end{minipage} &
\hspace*{-1cm}\begin{minipage}{0.45\textwidth}\includegraphics[scale=0.25]{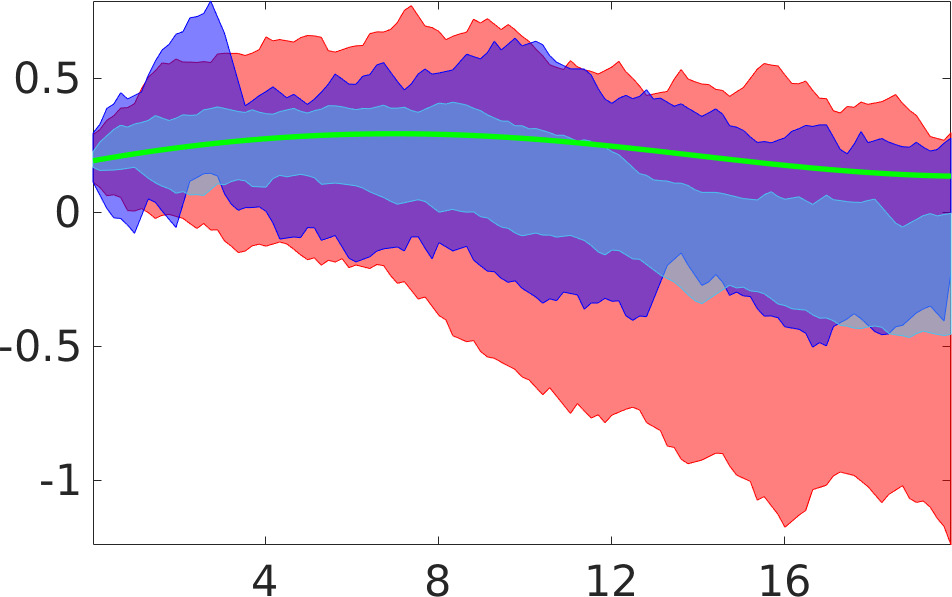}\end{minipage}\\
&\\[-0.25cm]
\hspace*{0cm}\begin{minipage}{0.1\textwidth}t {\rm [days]}\end{minipage} &\hspace*{-0.5cm}\begin{minipage}{0.1\textwidth}t {\rm [days]}\end{minipage}
\end{tabular}
\caption{Shown are typical stochastic spreads of velocity $\mathbf{u}^p_1=(u^p_1,v^p_1)$ at weather stations
located in the slow flow region (upper row) and fast flow region (lower row) 
for the SPDE without (red) and with (light blue) using \textit{tempering, jittering, and nudging (Algorithm 5)};
\textit{Algorithm 4} (blue) is given for ease of comparison.
The green line is the true solution; the grid size is $G_s=129\times65$.
}
\label{fig:spread_129x65_16ws_4h_nudging}
\end{figure}

Again, at the higher resolution ($G_s=257\times129$), the nudged solution is even more accurate than its low-resolution version
(Fig.~\ref{fig:av_relerr_v_257x129_NDG_ws16}), and the uncertainty is further reduced (Fig.~\ref{fig:spread_257x129_16ws_4h_nudging}).
\begin{figure}[H]
\centering
\begin{tabular}{ccc}
& \begin{minipage}{0.45\textwidth}\hspace*{1.4cm}\bf (a): EME for all weather stations\end{minipage} & \begin{minipage}{0.45\textwidth}\hspace*{1.4cm}\bf (b): EME for the whole domain\end{minipage}\\
\begin{minipage}{0.02\textwidth}\rotatebox{90}{EME}\end{minipage} & 
\hspace*{-0.25cm}\begin{minipage}{0.45\textwidth}\includegraphics[width=7.5cm]{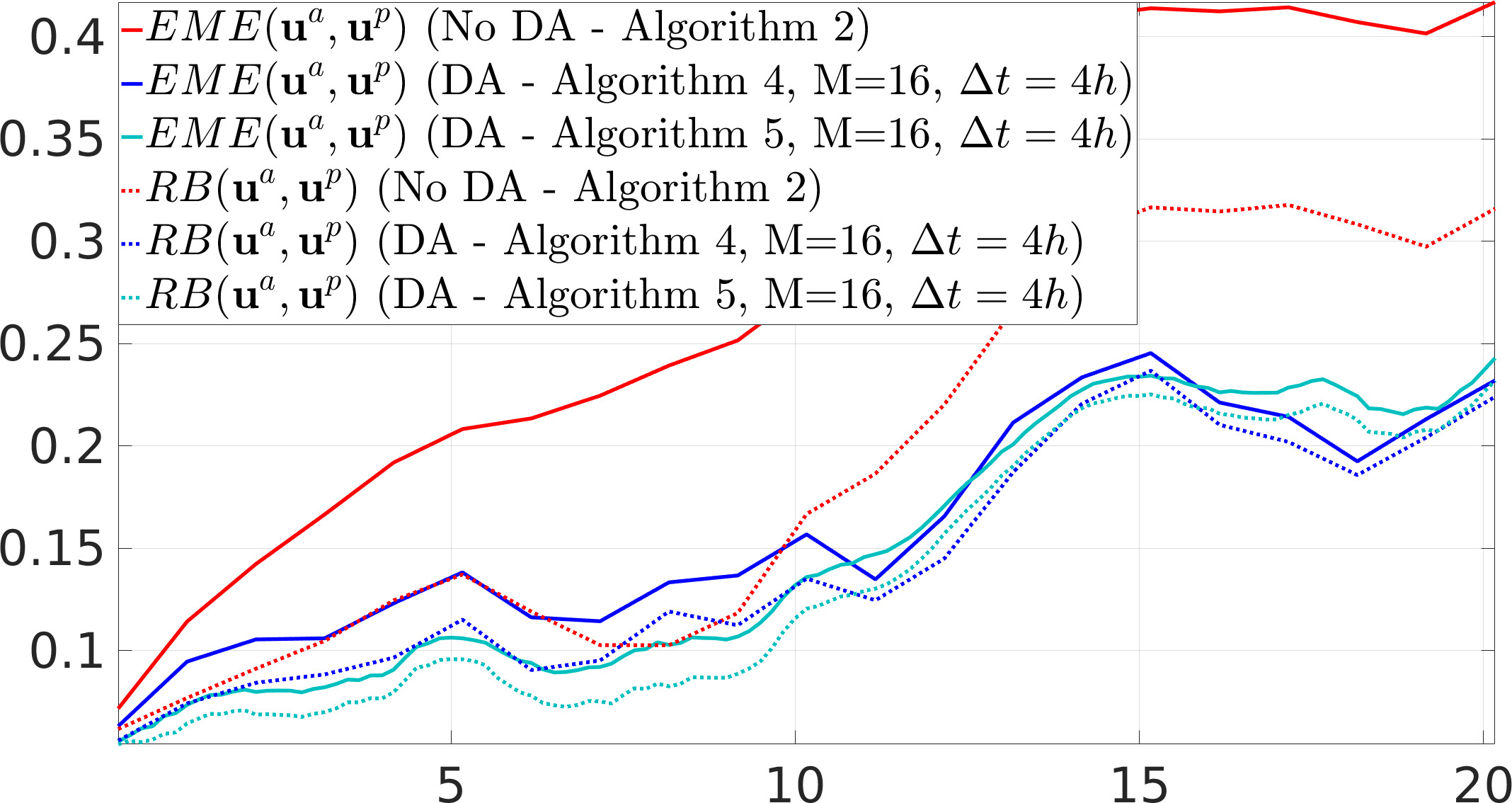}\end{minipage} &
\hspace*{-0.25cm}\begin{minipage}{0.45\textwidth}\includegraphics[width=7.5cm]{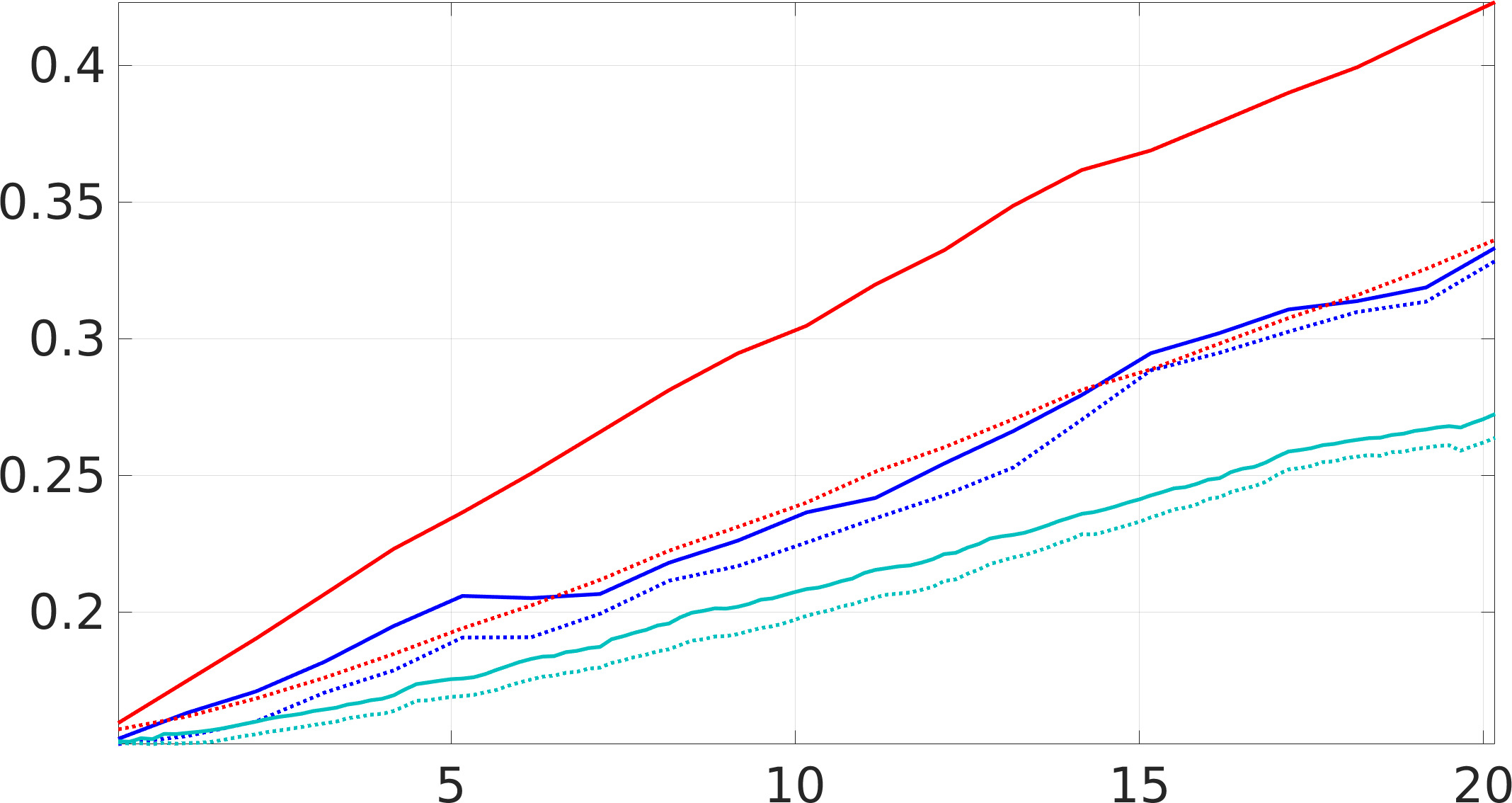}\end{minipage}\\
& &\\[-0.25cm]
&\hspace*{0.5cm}\begin{minipage}{0.1\textwidth}t {\rm [days]}\end{minipage} &\hspace*{0.5cm}\begin{minipage}{0.1\textwidth}t {\rm [days]}\end{minipage}
\end{tabular}
\caption{The same as in Fig.~\ref{fig:av_relerr_v_129x65_NDG_ws16}, but for the signal grid $G_s=257\times129$.}
\label{fig:av_relerr_v_257x129_NDG_ws16}
\end{figure}

\begin{figure}[H]
\centering
\hspace*{1.0cm}
\begin{tabular}{cc}
\hspace*{0cm}\begin{minipage}{0.1\textwidth} \large$u^p_1$ \end{minipage} &\hspace*{0cm}\begin{minipage}{0.1\textwidth} \large$v^p_1$ \end{minipage}\\
&\\[-0.35cm]
\hspace*{-0.25cm}\begin{minipage}{0.45\textwidth}\includegraphics[scale=0.25]{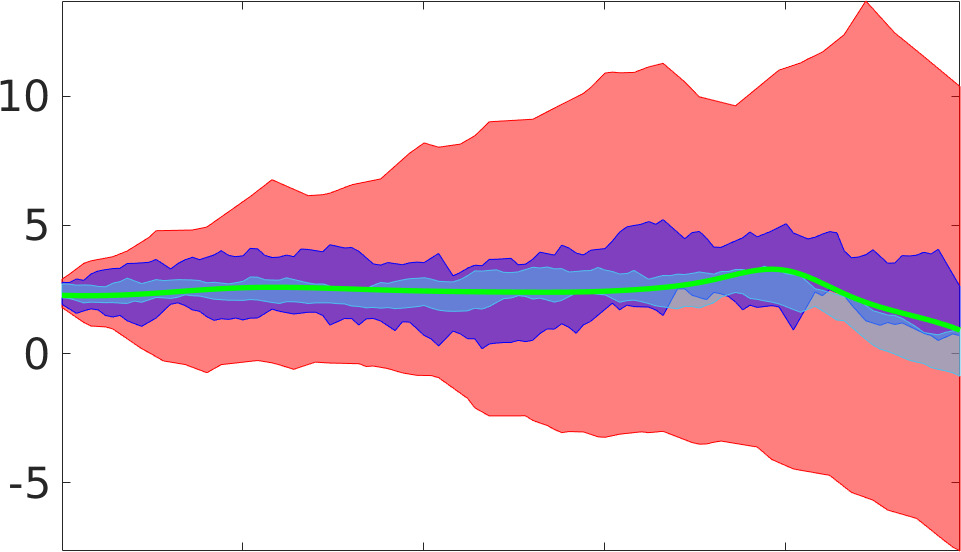}\end{minipage} &
\hspace*{-0.25cm}\begin{minipage}{0.45\textwidth}\includegraphics[scale=0.25]{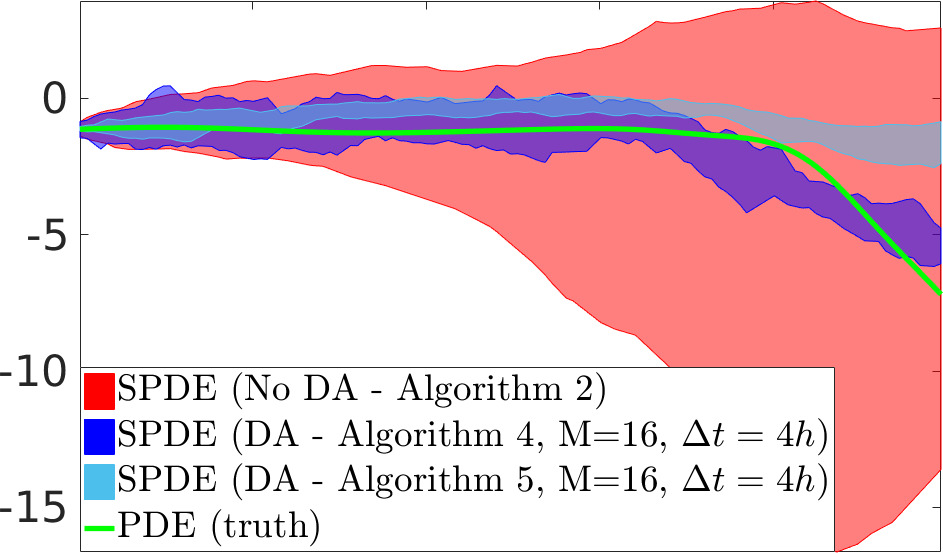}\end{minipage}\\
&\\[-0.25cm]
\hspace*{-0.25cm}\begin{minipage}{0.45\textwidth}\includegraphics[scale=0.25]{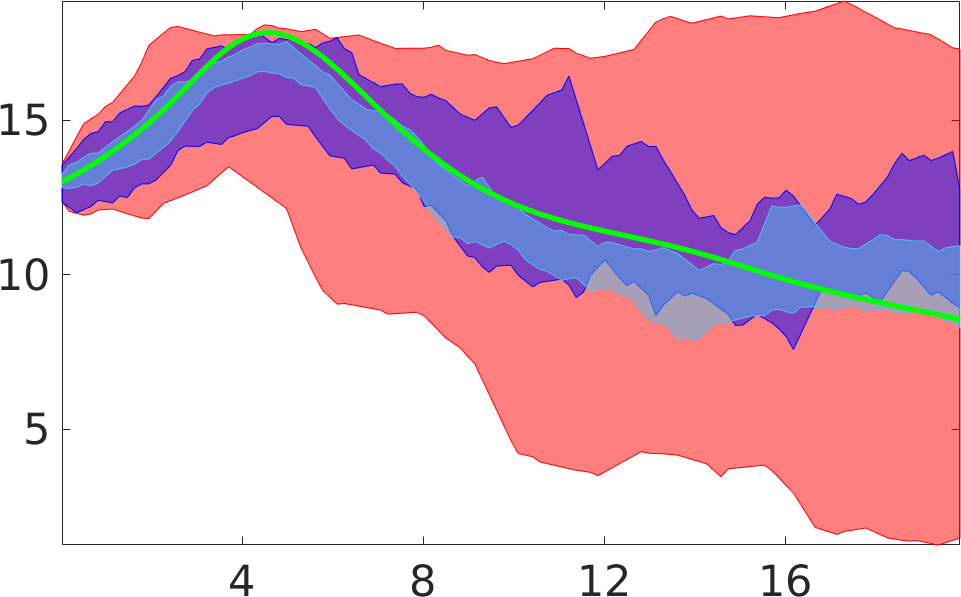}\end{minipage} &
\hspace*{-0.25cm}\begin{minipage}{0.45\textwidth}\includegraphics[scale=0.25]{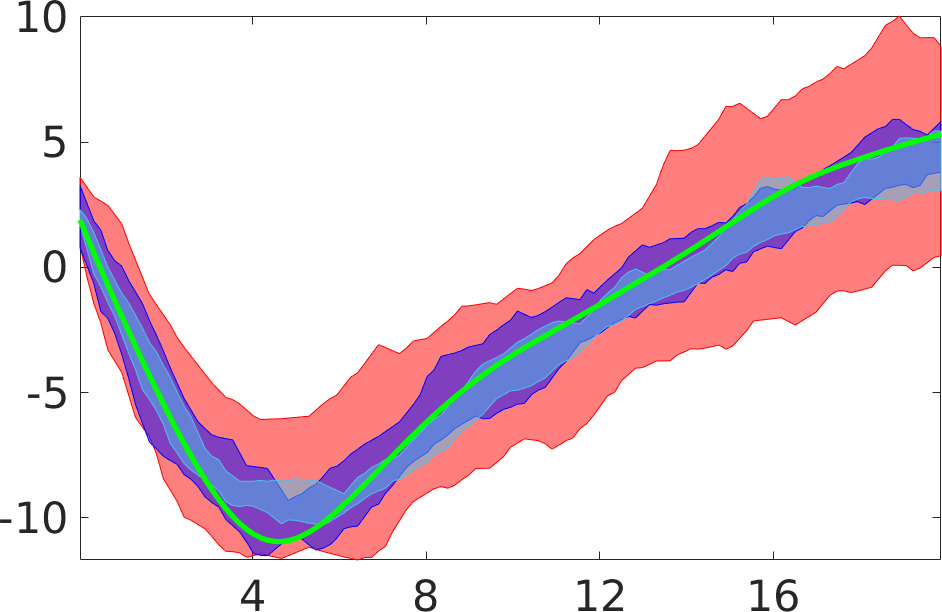}\end{minipage}\\
&\\[-0.25cm]
\hspace*{0cm}\begin{minipage}{0.1\textwidth}t {\rm [days]}\end{minipage} &\hspace*{-0.5cm}\begin{minipage}{0.1\textwidth}t {\rm [days]}\end{minipage}
\end{tabular}
\caption{The same as in Figure~\ref{fig:spread_129x65_16ws_4h_nudging}, but for the signal grid $G_s=257\times129$.}
\label{fig:spread_257x129_16ws_4h_nudging}
\end{figure}

It is important to note that the number of observation locations (weather stations) used in the simulations above is only 0.19\% and 0.05\% of all degrees of freedom 
for the grids $G_s=129\times65$ and $G_s=257\times129$, respectively. 
Obviously, this number of weather stations is not enough to significantly reduce
the uncertainty and decrease the error between the true solution and its parameterisation. Therefore, in the next simulation 
we double the number of observation locations ($M=32$) and compare how Algorithm 5 (data assimilation with the nudging method) 
performs when more observational data is available.

As can be seen in Fig.~\ref{fig:av_relerr_v_129x65_NDG_ws32}, adding more weather stations does not significantly influence the results
for the low-resolution (Fig.~\ref{fig:av_relerr_v_129x65_NDG_ws32}) and higher-resolution (Fig.~\ref{fig:av_relerr_v_129x65_NDG_ws32})  simulation.

\begin{figure}[H]
\centering
\begin{tabular}{ccc}
& \begin{minipage}{0.45\textwidth}\hspace*{1.4cm}\bf (a): EME for all weather stations\end{minipage} & \begin{minipage}{0.45\textwidth}\hspace*{1.4cm}\bf (b): EME for the whole domain\end{minipage}\\
\begin{minipage}{0.02\textwidth}\rotatebox{90}{EME}\end{minipage} & 
\hspace*{-0.25cm}\begin{minipage}{0.45\textwidth}\includegraphics[width=7.5cm]{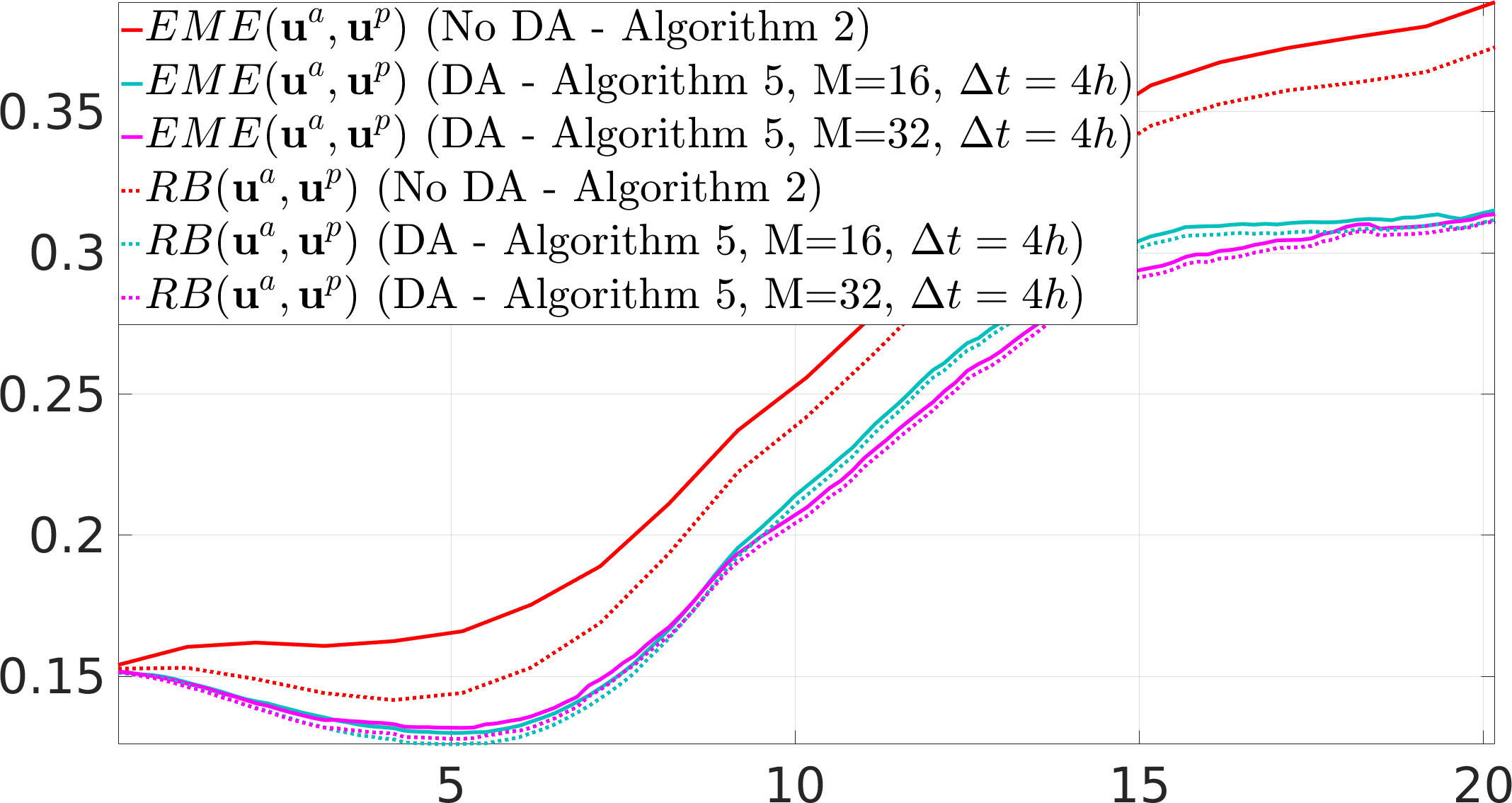}\end{minipage} &
\hspace*{-0.25cm}\begin{minipage}{0.45\textwidth}\includegraphics[width=7.5cm]{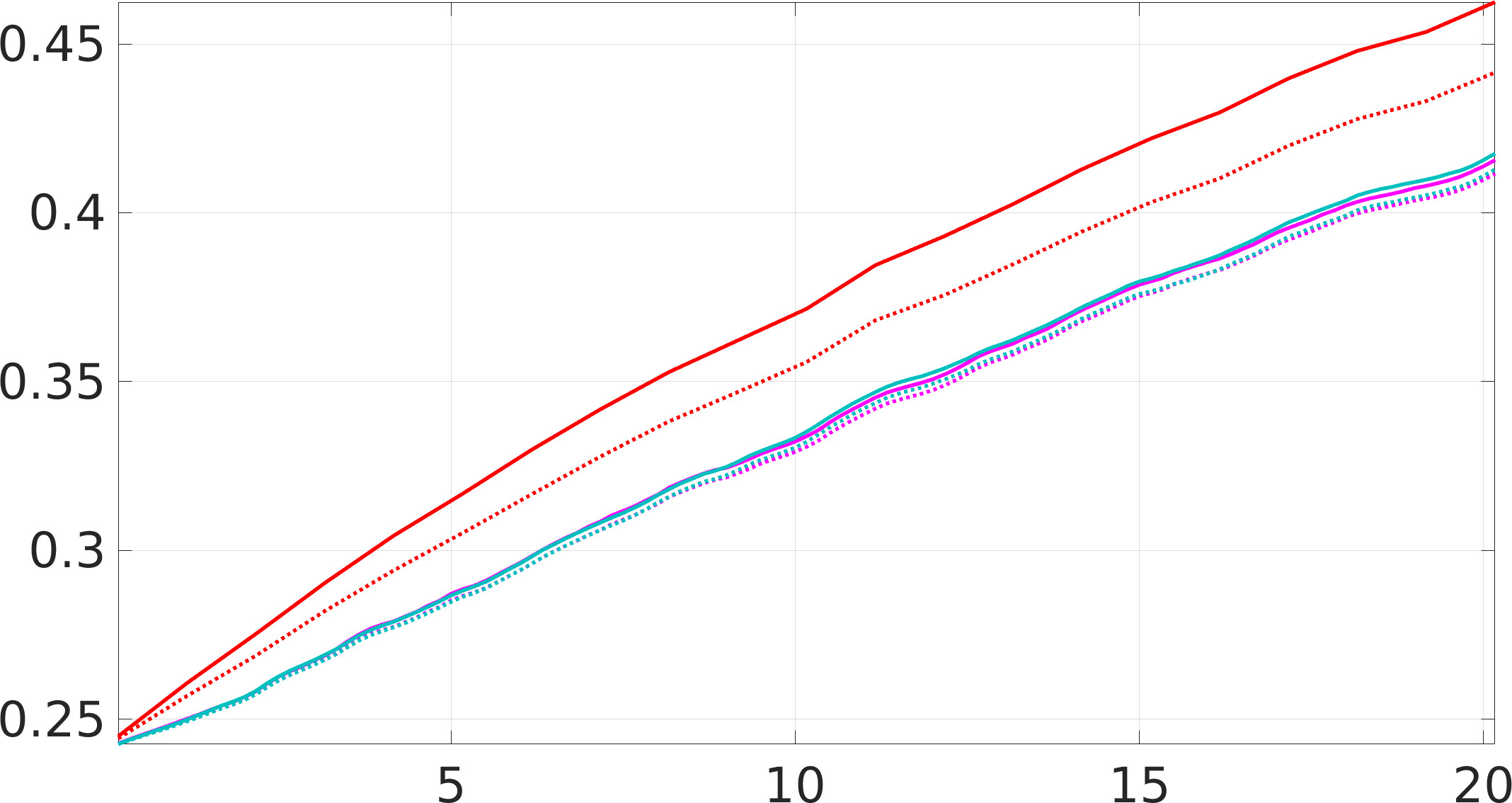}\end{minipage}\\
& &\\[-0.25cm]
&\hspace*{0.5cm}\begin{minipage}{0.1\textwidth}t {\rm [days]}\end{minipage} &\hspace*{0.5cm}\begin{minipage}{0.1\textwidth}t {\rm [days]}\end{minipage}
\end{tabular}
\caption{Evolution of the ensemble mean relative $l_2$-norm error (EME) and relative bias (RB) for \textbf{(a)} all weather stations and \textbf{(b)} the whole domain; $\mathbf{u}^a$ is the true solution, $\mathbf{u}^p$ is the stochastic solution.
In order to assimilate data we use \textit{tempering, jittering, and nudging (Algorithm 5)}; 
the data is assimilated from $M=32$ weather stations every 4 hours; the grid size is $G_s=129\times65$.}
\label{fig:av_relerr_v_129x65_NDG_ws32}
\end{figure}

\begin{figure}
\centering
\begin{tabular}{ccc}
& \begin{minipage}{0.45\textwidth}\hspace*{1.4cm}\bf (a): EME for all weather stations\end{minipage} & \begin{minipage}{0.45\textwidth}\hspace*{1.4cm}\bf (b): EME for the whole domain\end{minipage}\\
\begin{minipage}{0.02\textwidth}\rotatebox{90}{EME}\end{minipage} & 
\hspace*{-0.25cm}\begin{minipage}{0.45\textwidth}\includegraphics[width=7.5cm]{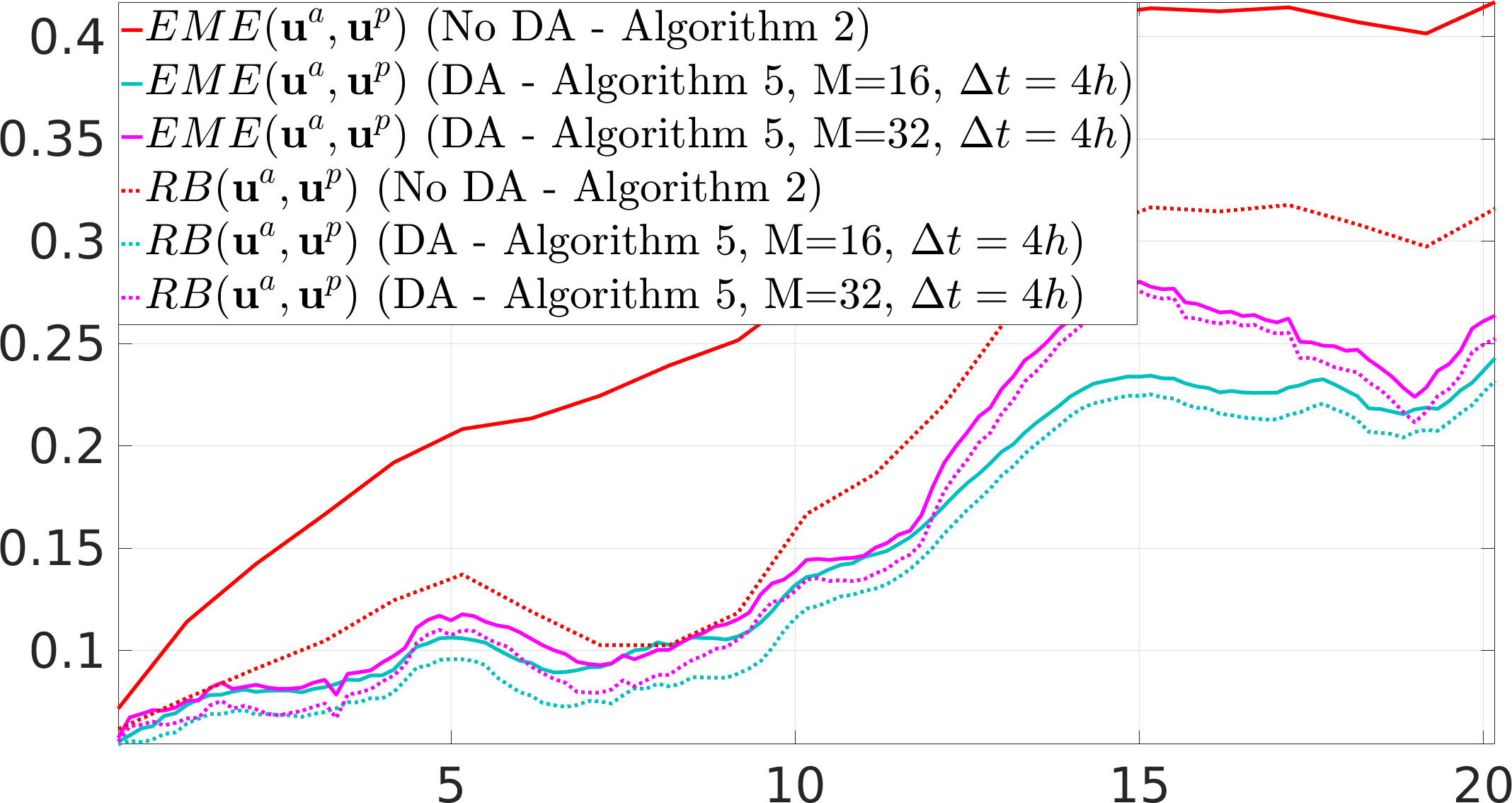}\end{minipage} &
\hspace*{-0.25cm}\begin{minipage}{0.45\textwidth}\includegraphics[width=7.5cm]{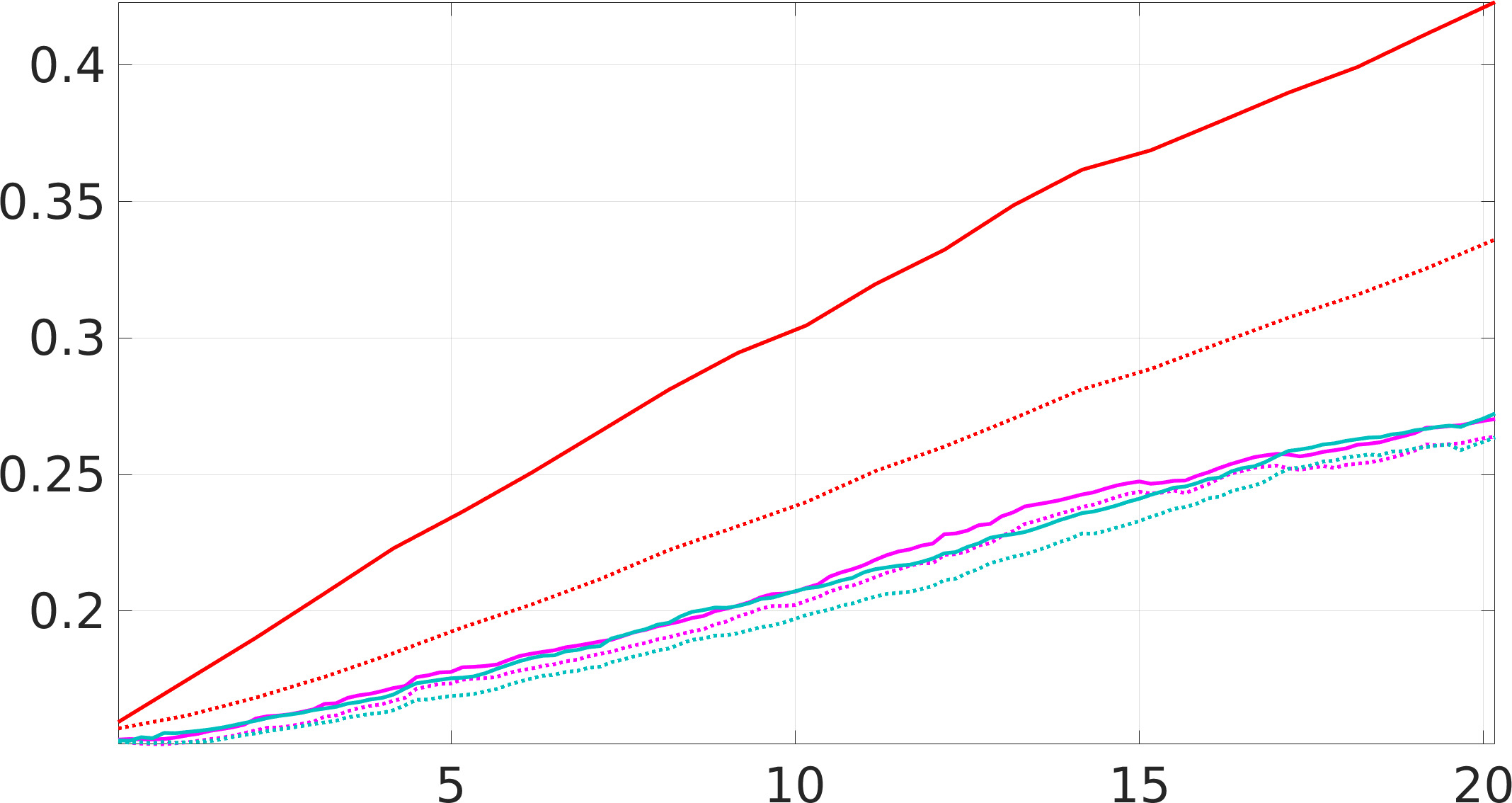}\end{minipage}\\
& &\\[-0.25cm]
&\hspace*{0.5cm}\begin{minipage}{0.1\textwidth}t {\rm [days]}\end{minipage} &\hspace*{0.5cm}\begin{minipage}{0.1\textwidth}t {\rm [days]}\end{minipage}
\end{tabular}
\caption{The same as in Figure~\ref{fig:av_relerr_v_129x65_NDG_ws32}, but for the signal grid $G_s=257\times129$.}
\label{fig:av_relerr_v_257x129_NDG_ws32}
\end{figure}

When more observational data is available (32 weather stations), the uncertainty of the stochastic solution
remains virtually the same compared with the case of using 16 weather stations for both $G_s=129\times65$ 
(Fig.~\ref{fig:spread_129x65_32ws_4h_nudging}) and $G_s=257\times129$ (Fig.~\ref{fig:spread_257x129_32ws_4h_nudging}).

\begin{figure}
\centering
\hspace*{1.0cm}
\begin{tabular}{cc}
\hspace*{0cm}\begin{minipage}{0.1\textwidth} \large$u^p_1$ \end{minipage} &\hspace*{0cm}\begin{minipage}{0.1\textwidth} \large$v^p_1$ \end{minipage}\\
&\\[-0.35cm]
\hspace*{-0.25cm}\begin{minipage}{0.45\textwidth}\includegraphics[scale=0.25]{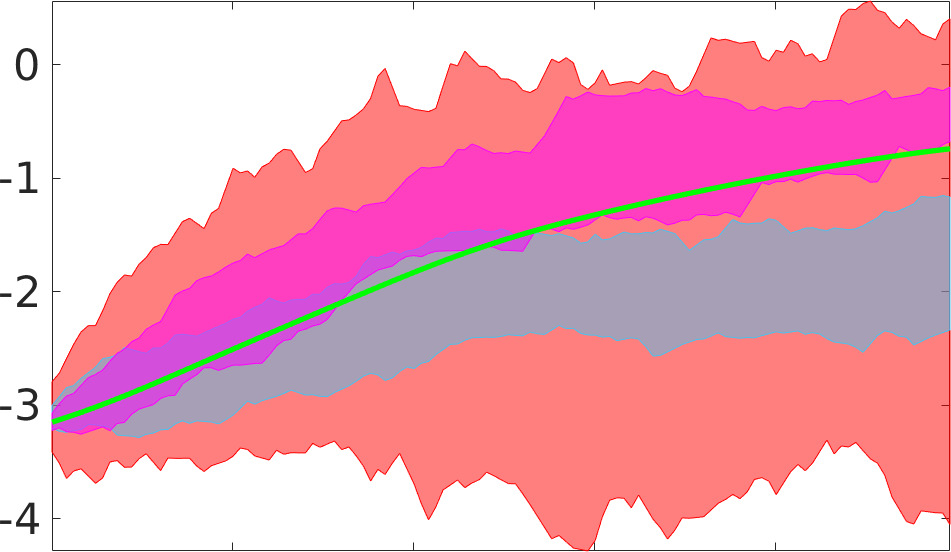}\end{minipage} &
\hspace*{-0.25cm}\begin{minipage}{0.45\textwidth}\includegraphics[scale=0.25]{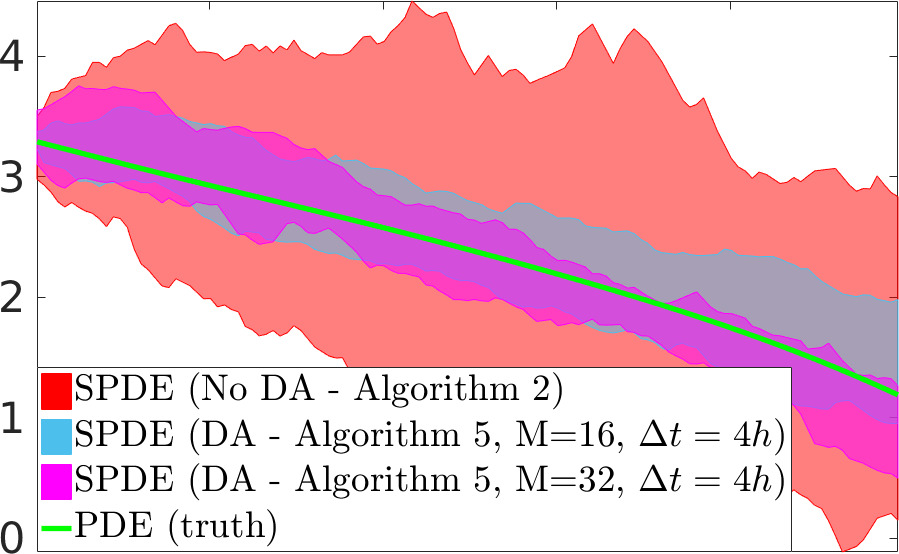}\end{minipage}\\
&\\[-0.25cm]
\hspace*{-0.75cm}\begin{minipage}{0.45\textwidth}\includegraphics[scale=0.25]{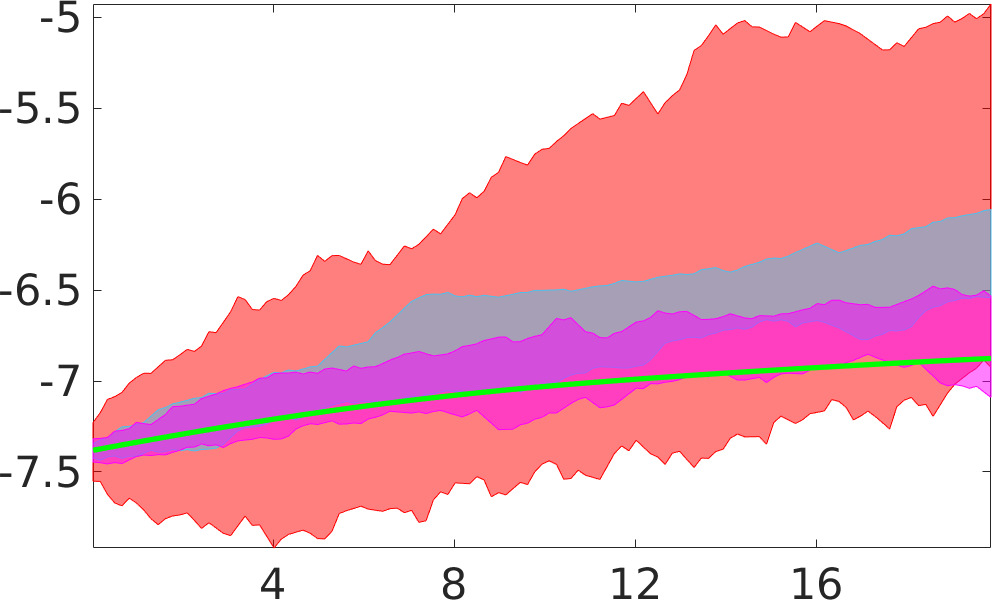}\end{minipage} &
\hspace*{-1cm}\begin{minipage}{0.45\textwidth}\includegraphics[scale=0.25]{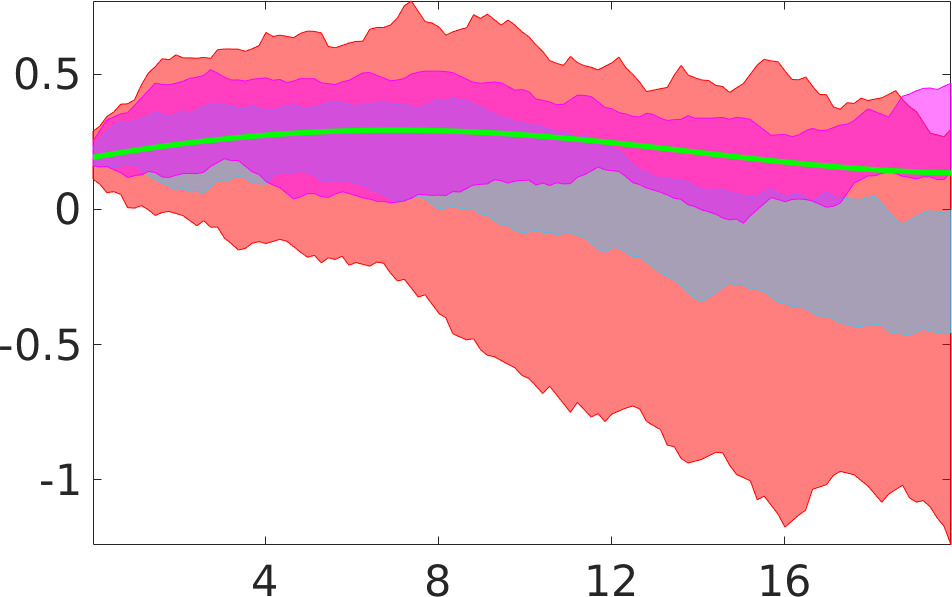}\end{minipage}\\
&\\[-0.25cm]
\hspace*{0cm}\begin{minipage}{0.1\textwidth}t {\rm [days]}\end{minipage} &\hspace*{-0.5cm}\begin{minipage}{0.1\textwidth}t {\rm [days]}\end{minipage}
\end{tabular}
\caption{Shown are typical stochastic spreads of velocity $\mathbf{u}^p_1=(u^p_1,v^p_1)$ at the weather stations
located in the slow flow region (upper row) and fast flow region (lower row) 
for the SPDE without (red) and with (blue) using \textit{tempering, jittering, and nudging (Algorithm 5)}.
The green line is the true solution; the grid size is $G_s=129\times65$.
}
\label{fig:spread_129x65_32ws_4h_nudging}
\end{figure}

\begin{figure}[H]
\centering
\hspace*{1.0cm}
\begin{tabular}{cc}
\hspace*{0cm}\begin{minipage}{0.1\textwidth} \large$u^p_1$ \end{minipage} &\hspace*{0cm}\begin{minipage}{0.1\textwidth} \large$v^p_1$ \end{minipage}\\
&\\[-0.35cm]
\hspace*{-0.25cm}\begin{minipage}{0.45\textwidth}\includegraphics[scale=0.25]{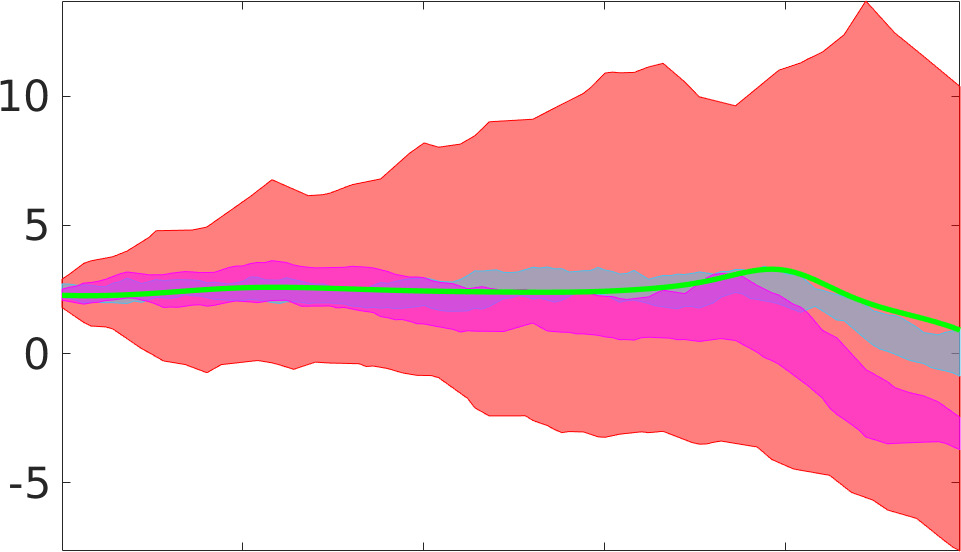}\end{minipage} &
\hspace*{-0.25cm}\begin{minipage}{0.45\textwidth}\includegraphics[scale=0.25]{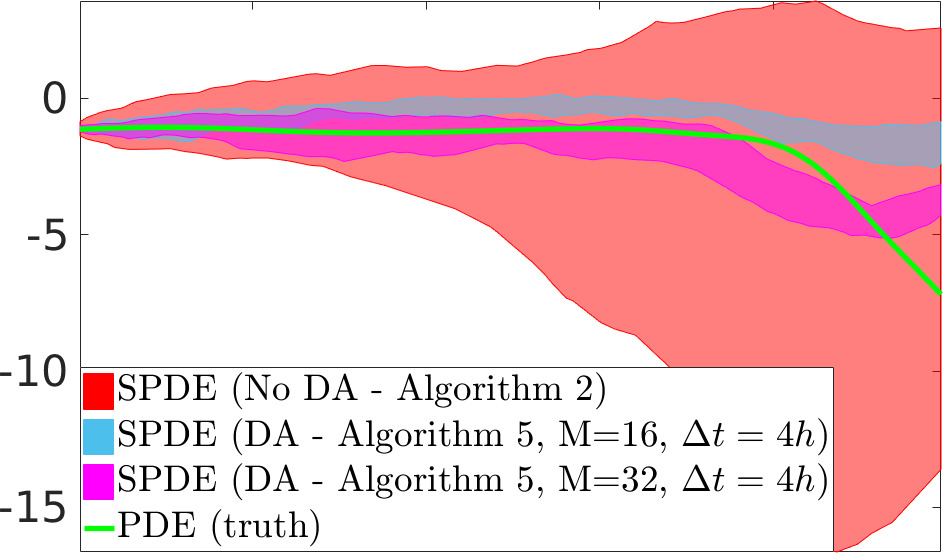}\end{minipage}\\
&\\[-0.25cm]
\hspace*{-0.25cm}\begin{minipage}{0.45\textwidth}\includegraphics[scale=0.25]{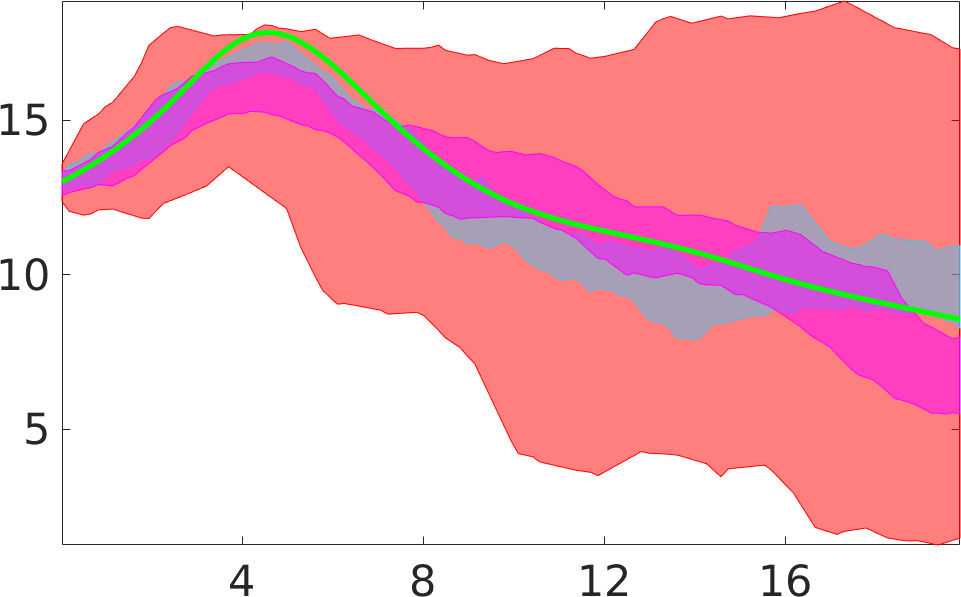}\end{minipage} &
\hspace*{-0.25cm}\begin{minipage}{0.45\textwidth}\includegraphics[scale=0.25]{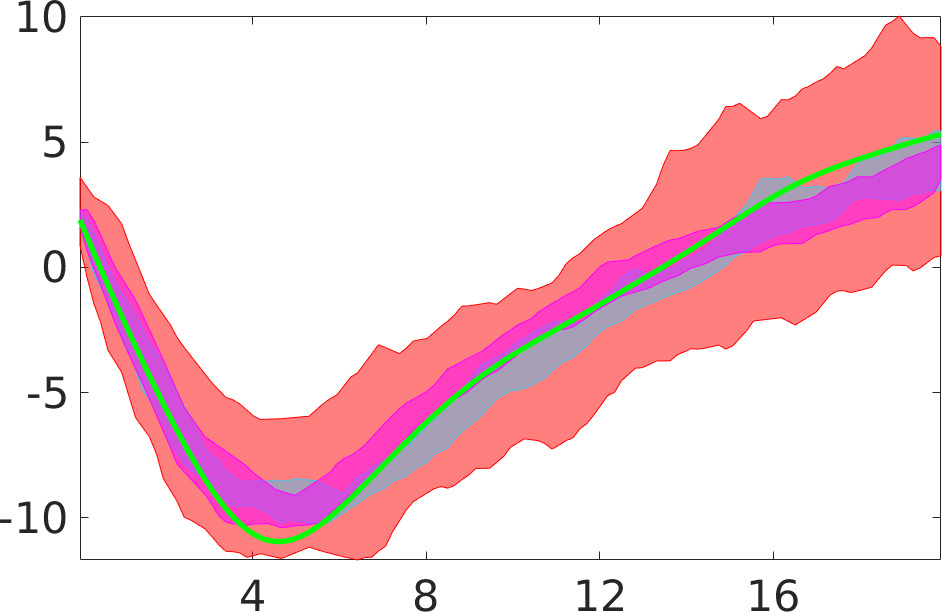}\end{minipage}\\
&\\[-0.25cm]
\hspace*{0cm}\begin{minipage}{0.1\textwidth}t {\rm [days]}\end{minipage} &\hspace*{-0.5cm}\begin{minipage}{0.1\textwidth}t {\rm [days]}\end{minipage}
\end{tabular}
\caption{The same as in Fig.~\ref{fig:spread_129x65_32ws_4h_nudging}, but for the signal grid $G_s=257\times129$.}
\label{fig:spread_257x129_32ws_4h_nudging}
\end{figure}

Again, we conclude that the Algorithm 4
improves the accuracy and reduces the uncertainty of the stochastic spread.
The three parameters that we studied, the data assimilation step, and the size of the grid and the number of weather station, influence the performance of the data assimilation methodology.
In particular, the smaller the data assimilation step is, the higher the accuracy becomes. We have found that
the number of weather stations has a minor effect on the accuracy. 
However,
if the resolution of the signal grid increases so does the
accuracy of the solution computed with using the data assimilation methodology. 
Moreover, increasing the resolution of the signal grid $G_s$ dramatically reduces the uncertainty.
The same conclusions are true for the Algorithm 5 that incorporates the nudging method. 
Moreover, the nudging procedure gives even more accurate solutions and further reduces the spread when compared with Algorithm 4. 

We have also assessed the ensemble reliability on the grid $G_s=257\times129$
by analysing the rank histograms (see, e.g.~\cite{Anderson1996}) 
computed by simulating the stochastic model (without using the data assimilation methodology) (Fig.~\ref{fig:rank_histograms_257x129_32ws_4h}a)
and compared them with the rank histograms for the ensemble produced by Algorithm 5 (Fig.~\ref{fig:rank_histograms_257x129_32ws_4h}b).
As the plots show,  the stochastic ensemble, computed without the regular corrections made through the data assimilation method, has a bias (rank histograms are not flat) at many observation locations. As a result it will makes the ensemble prediction unreliable. 

\begin{figure}[H]
\centering
\hspace*{3.0cm}
\begin{tabular}{cc}
\hspace*{-1.5cm}\begin{minipage}{0.1\textwidth} \large$u^p_1$ \end{minipage} \hspace*{2cm}\begin{minipage}{0.1\textwidth} \large$v^p_1$ \end{minipage} &
\hspace*{3cm}\begin{minipage}{0.1\textwidth} \large$u^p_1$ \end{minipage} \hspace*{2cm}\begin{minipage}{0.1\textwidth} \large$v^p_1$ \end{minipage}\\
&\\[-0.35cm]
\hspace*{-3.5cm}\begin{minipage}{0.45\textwidth}\includegraphics[scale=0.15]{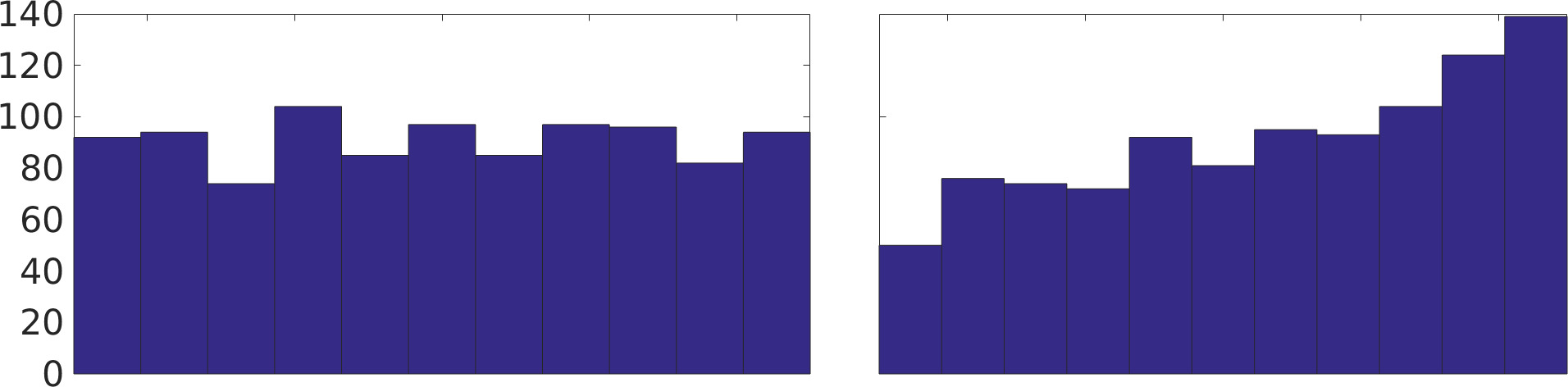}\end{minipage}&
\hspace*{1cm}\begin{minipage}{0.45\textwidth}\includegraphics[scale=0.15]{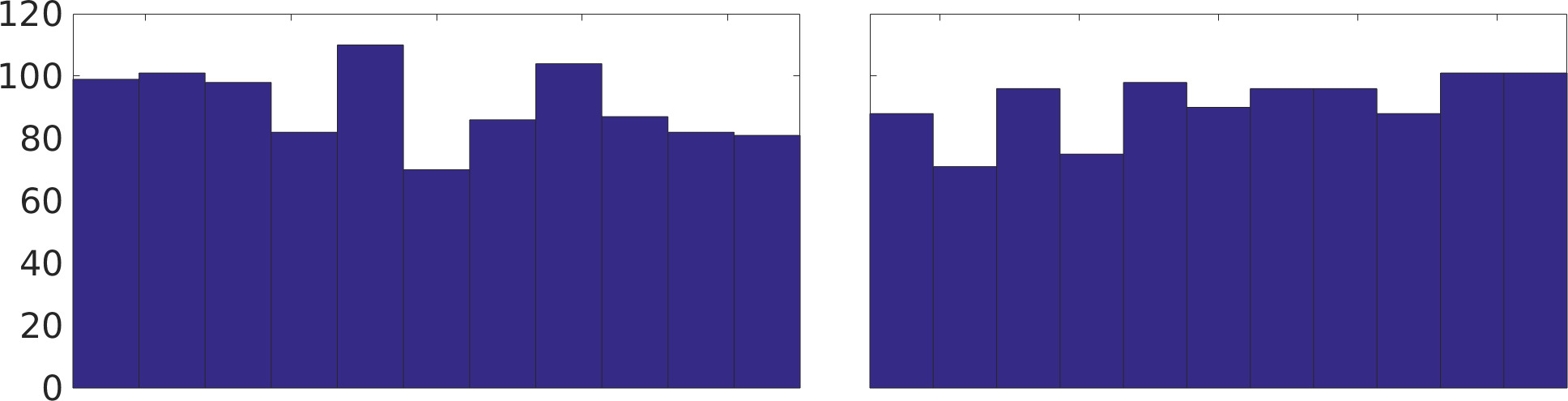}\end{minipage}\\
&\\[-0.125cm]
\hspace*{-3.5cm}\begin{minipage}{0.45\textwidth}\includegraphics[scale=0.15]{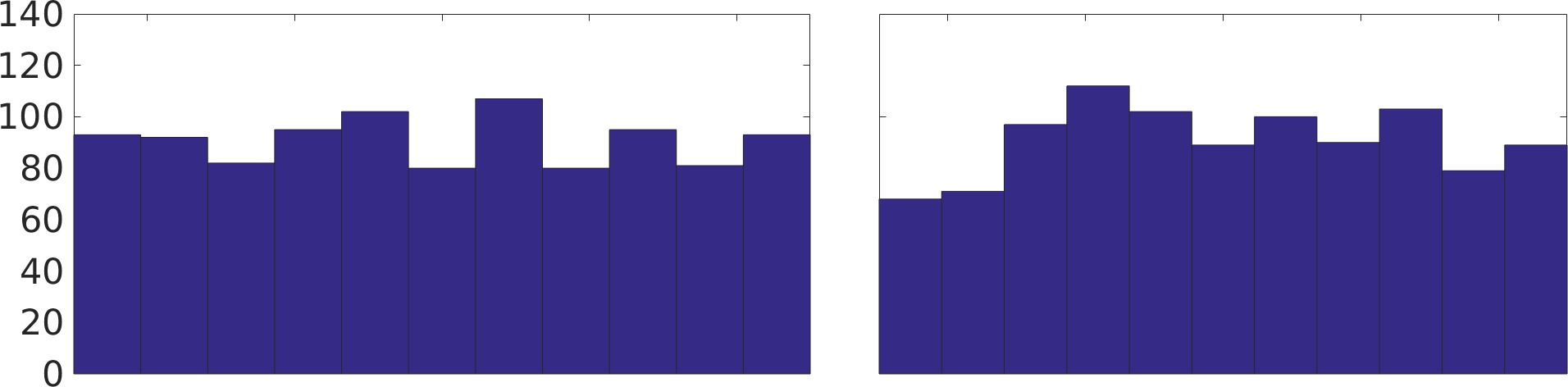}\end{minipage}&
\hspace*{1cm}\begin{minipage}{0.45\textwidth}\includegraphics[scale=0.15]{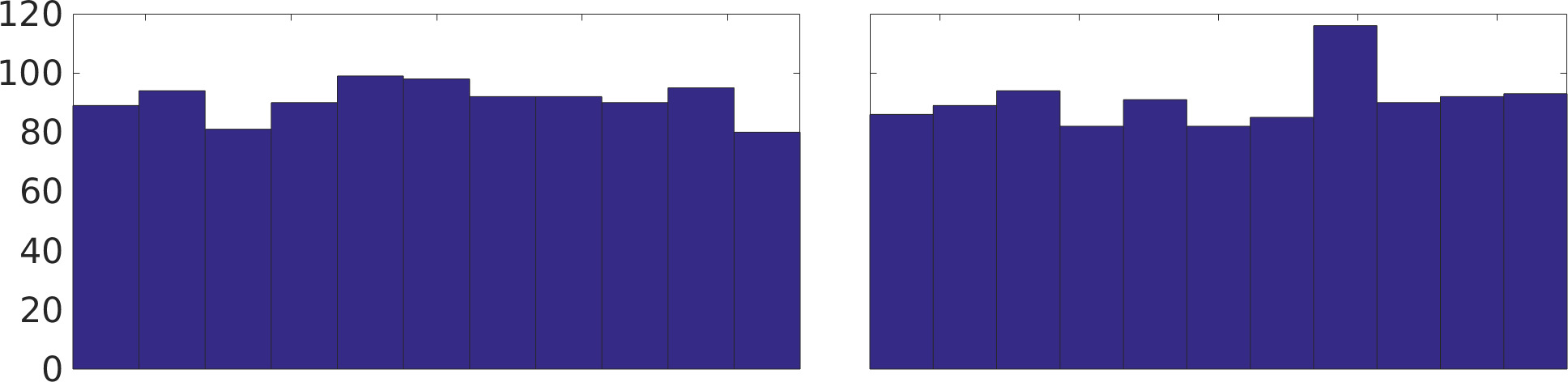}\end{minipage}\\
&\\[-0.125cm]
\hspace*{-3.5cm}\begin{minipage}{0.45\textwidth}\includegraphics[scale=0.15]{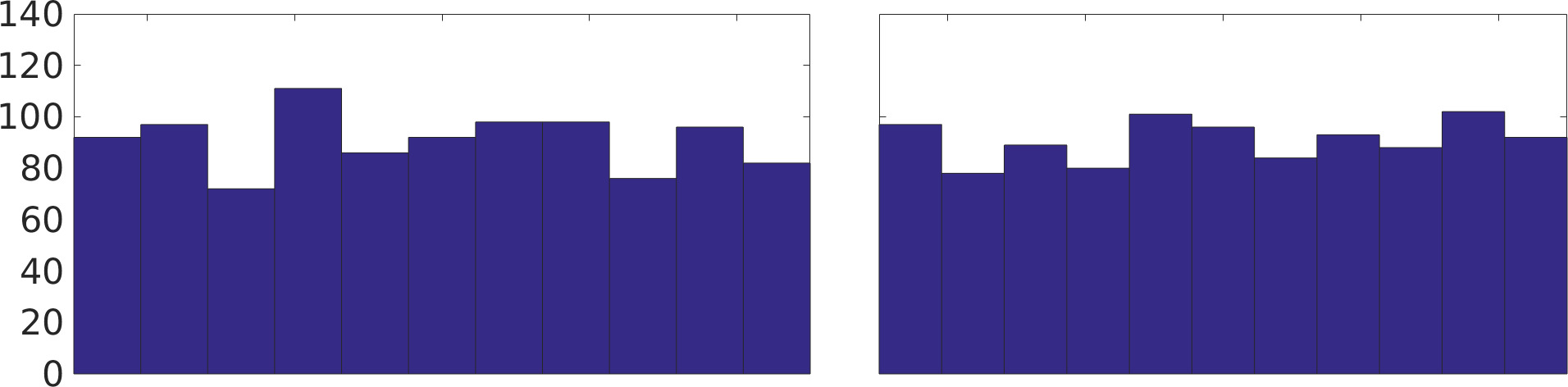}\end{minipage}&
\hspace*{1cm}\begin{minipage}{0.45\textwidth}\includegraphics[scale=0.15]{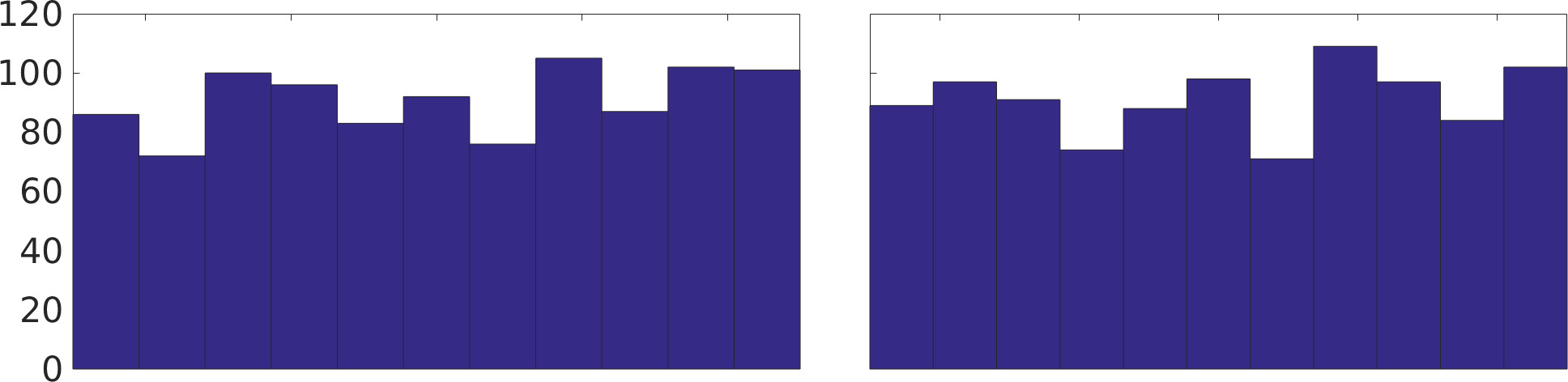}\end{minipage}\\
&\\[-0.125cm]
\hspace*{-3.5cm}\begin{minipage}{0.45\textwidth}\includegraphics[scale=0.15]{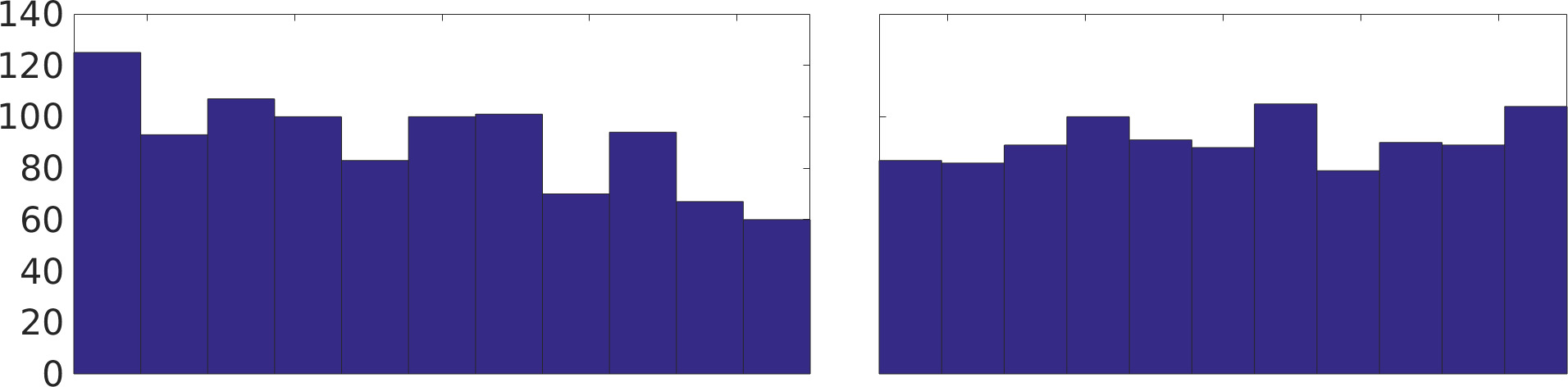}\end{minipage}&
\hspace*{1cm}\begin{minipage}{0.45\textwidth}\includegraphics[scale=0.15]{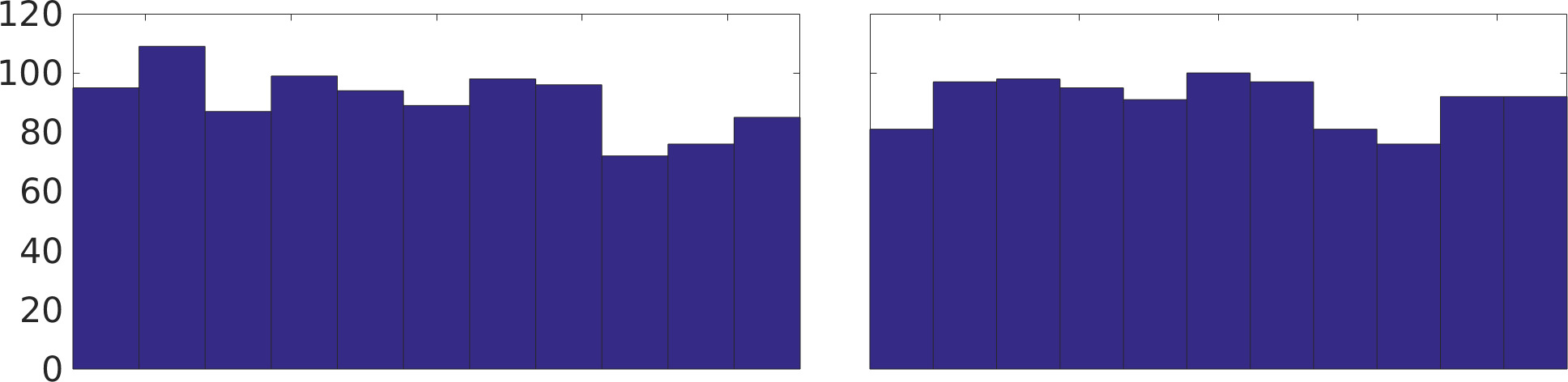}\end{minipage}\\
&\\[-0.125cm]
\hspace*{-3.5cm}\begin{minipage}{0.45\textwidth}\includegraphics[scale=0.15]{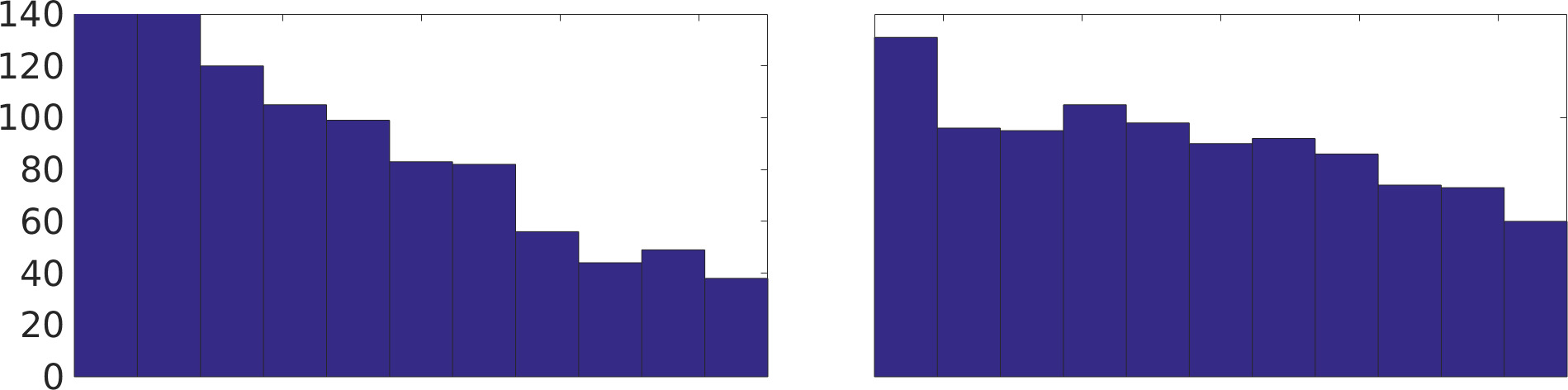}\end{minipage}&
\hspace*{1cm}\begin{minipage}{0.45\textwidth}\includegraphics[scale=0.15]{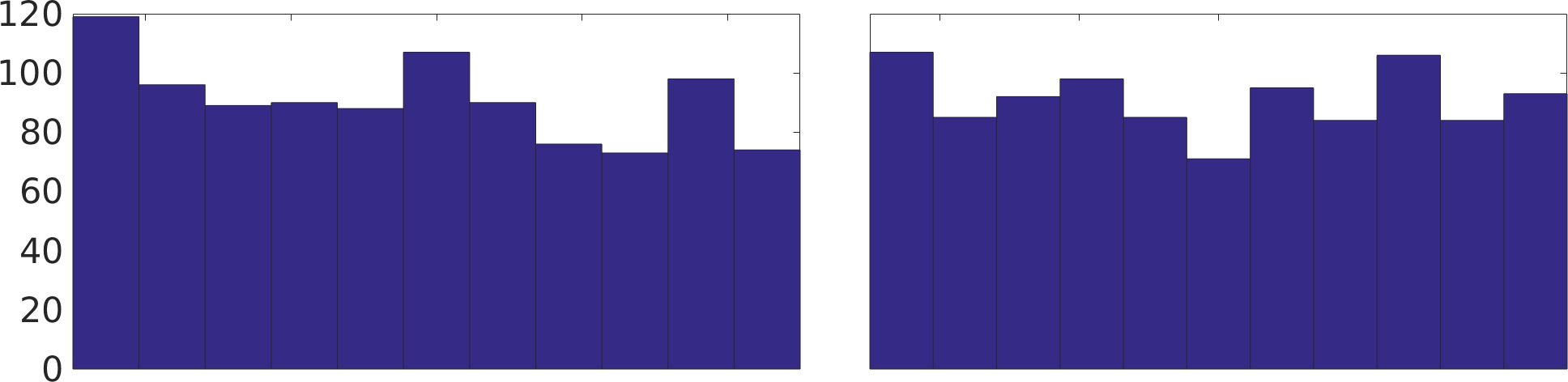}\end{minipage}\\
&\\[-0.125cm]
\hspace*{-3.5cm}\begin{minipage}{0.45\textwidth}\includegraphics[scale=0.15]{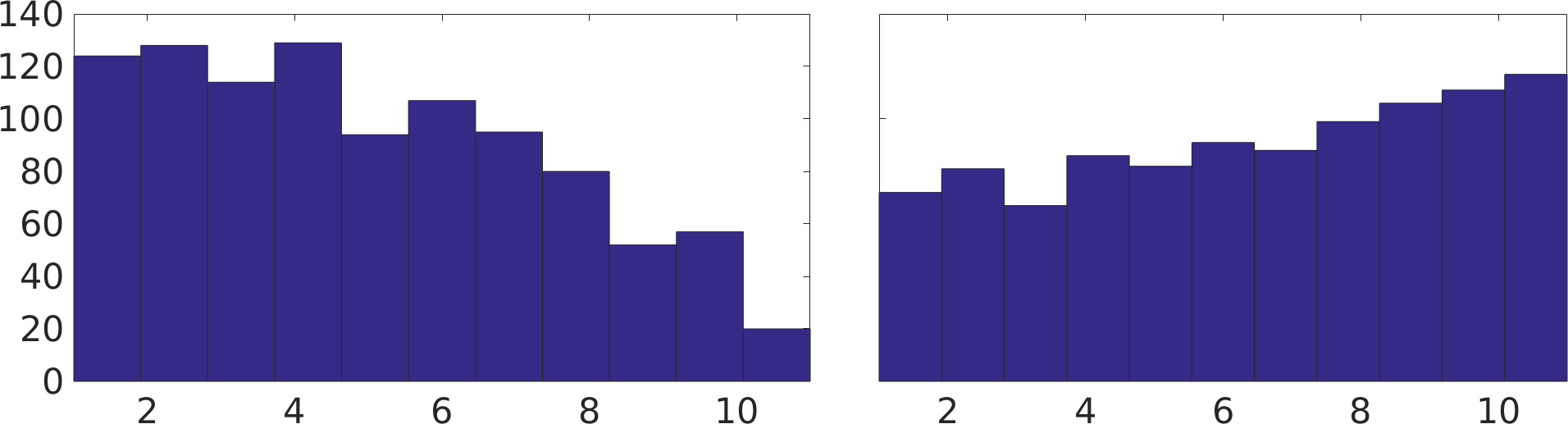}\end{minipage}&
\hspace*{1cm}\begin{minipage}{0.45\textwidth}\includegraphics[scale=0.15]{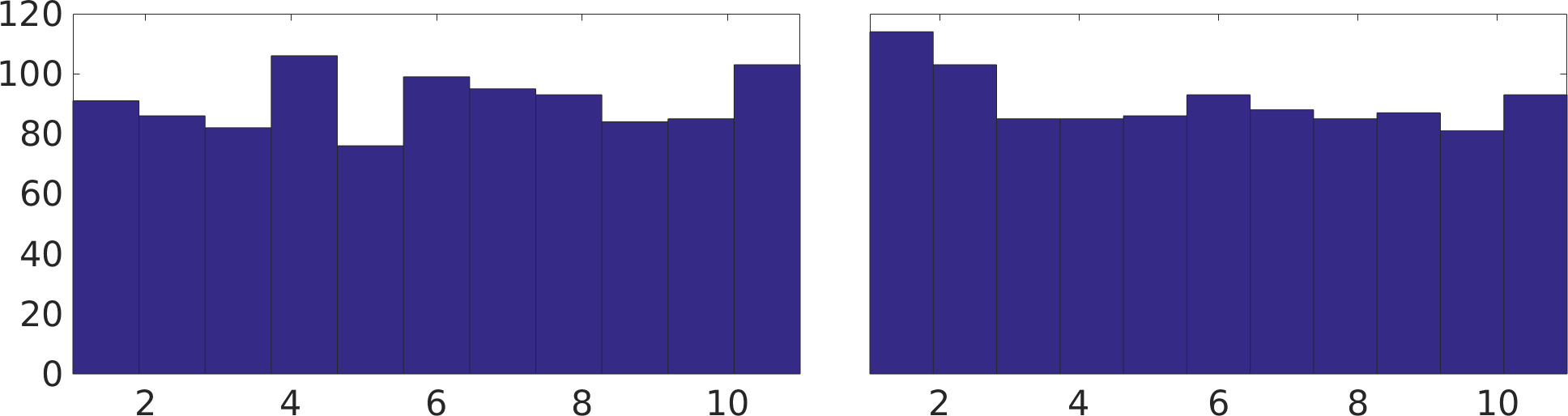}\end{minipage}\\
\hspace*{-2.5cm}\begin{minipage}{0.4\textwidth} \textbf{(a)} Rank histogram without data assimilation \end{minipage} & 
\hspace*{2cm}\begin{minipage}{0.4\textwidth} \textbf{(b)} Rank histograms with data assimilation \end{minipage}\\
\end{tabular}
\caption{Rank histograms for velocity $\mathbf{u}^p_1=(u^p_1,v^p_1)$ at six different locations (not shown).
Three observation points are located in the fast flow within red jets (first three upper rows in the plot), and the other three
observation points are located in the slow flow between the jets (next three rows in the plot).
Each histogram is based on 500 forecast-observation pairs generated by solving the stochastic QG model without (a) and with (b) 
using data assimilation. 
For simulating the stochastic QG model we use $N=100$ (ensemble size), $K=32$ (number of $\xi$'s);
for the data assimilation method 
we use \textit{Algorithm 5 (tempering, jittering, and 
nudging)}, the number of observation locations $M=32$, the data assimilation step $\Delta t=4h$.
Each ensemble member for the rank histogram is selected randomly from the ensemble of 100 members every 4 hours.}
\label{fig:rank_histograms_257x129_32ws_4h}
\end{figure}

The situations changes when one uses the data assimilation method based
one tempering, jittering, and nudging (Algorithm 5); see Fig.~\ref{fig:rank_histograms_257x129_32ws_4h}b.
All the rank histograms now show no sign of a pronounced bias and demonstrate a well-calibrated ensemble. 
In other words, the application of the data assimilation methodology proposed in the paper corrects the bias introduced by the paramaterisation and produces a reliable forecast.

Finally, we have compared true PV anomaly $q$ fields and the ones computed with using the data assimilation
methodology based on tempering, jittering, and nudging (Algorithm 5), see Fig.~\ref{fig:deter_vs_stoch_mu4D-8_257x129}.

\begin{figure}[H]
\centering
\begin{tabular}{ccc}
&  \hspace*{-1cm} $q^a_1$ & \hspace*{-1cm} $\bar{q}^p_1$ \\
\begin{minipage}{0.02\textwidth}\rotatebox{90}{$t=1$ day}\end{minipage} 
& \hspace*{-0.25cm}\begin{minipage}{0.31\textwidth}\includegraphics[width=5cm,height=2.5cm]{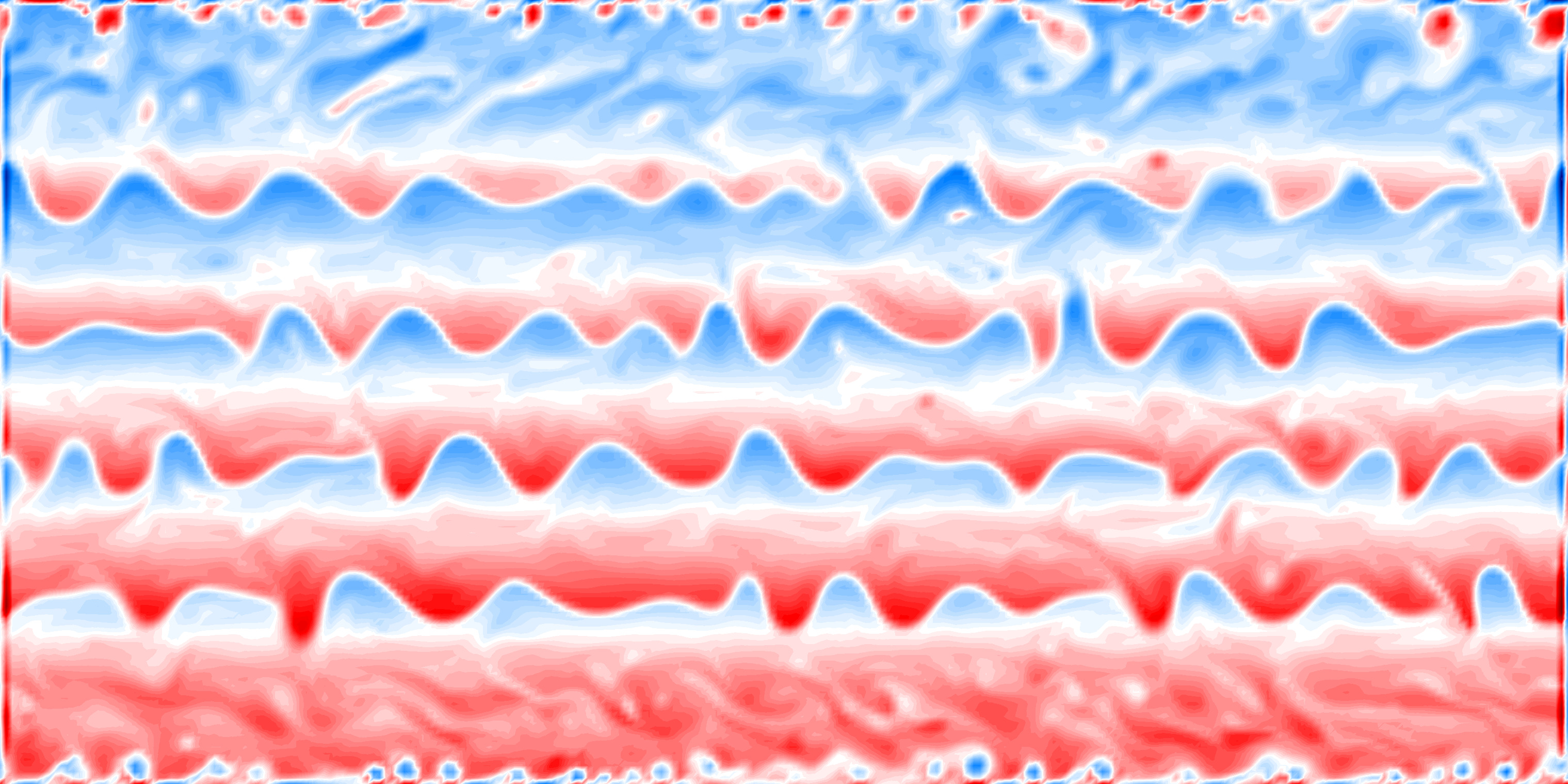}\end{minipage} 
& \hspace*{-0.25cm}\begin{minipage}{0.31\textwidth}\includegraphics[width=5cm,height=2.5cm]{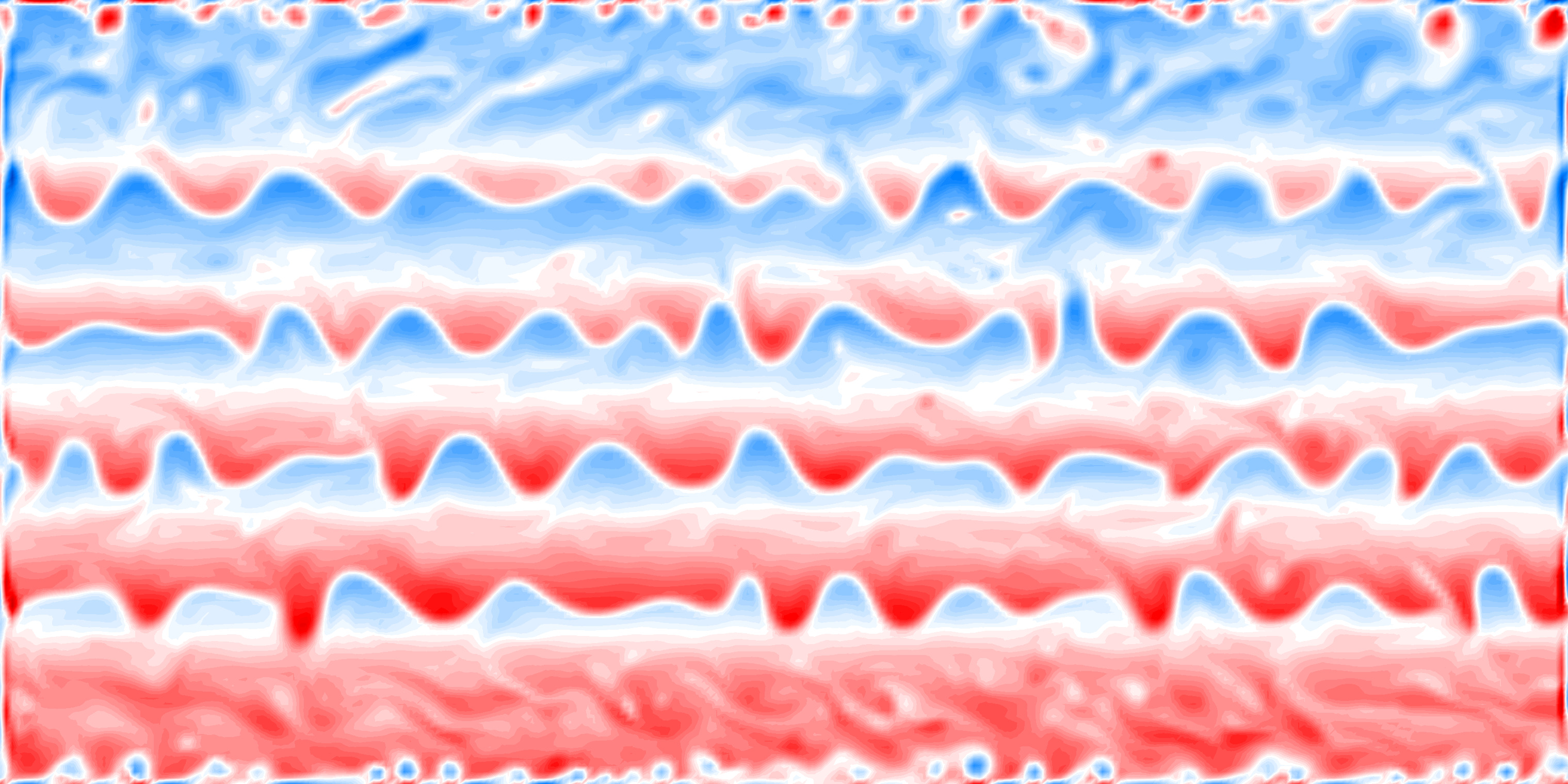}\end{minipage}
\\
\\[-0.25cm]
\begin{minipage}{0.02\textwidth}\rotatebox{90}{$t=10$ days}\end{minipage} 
& \hspace*{-0.25cm}\begin{minipage}{0.31\textwidth}\includegraphics[width=5cm,height=2.5cm]{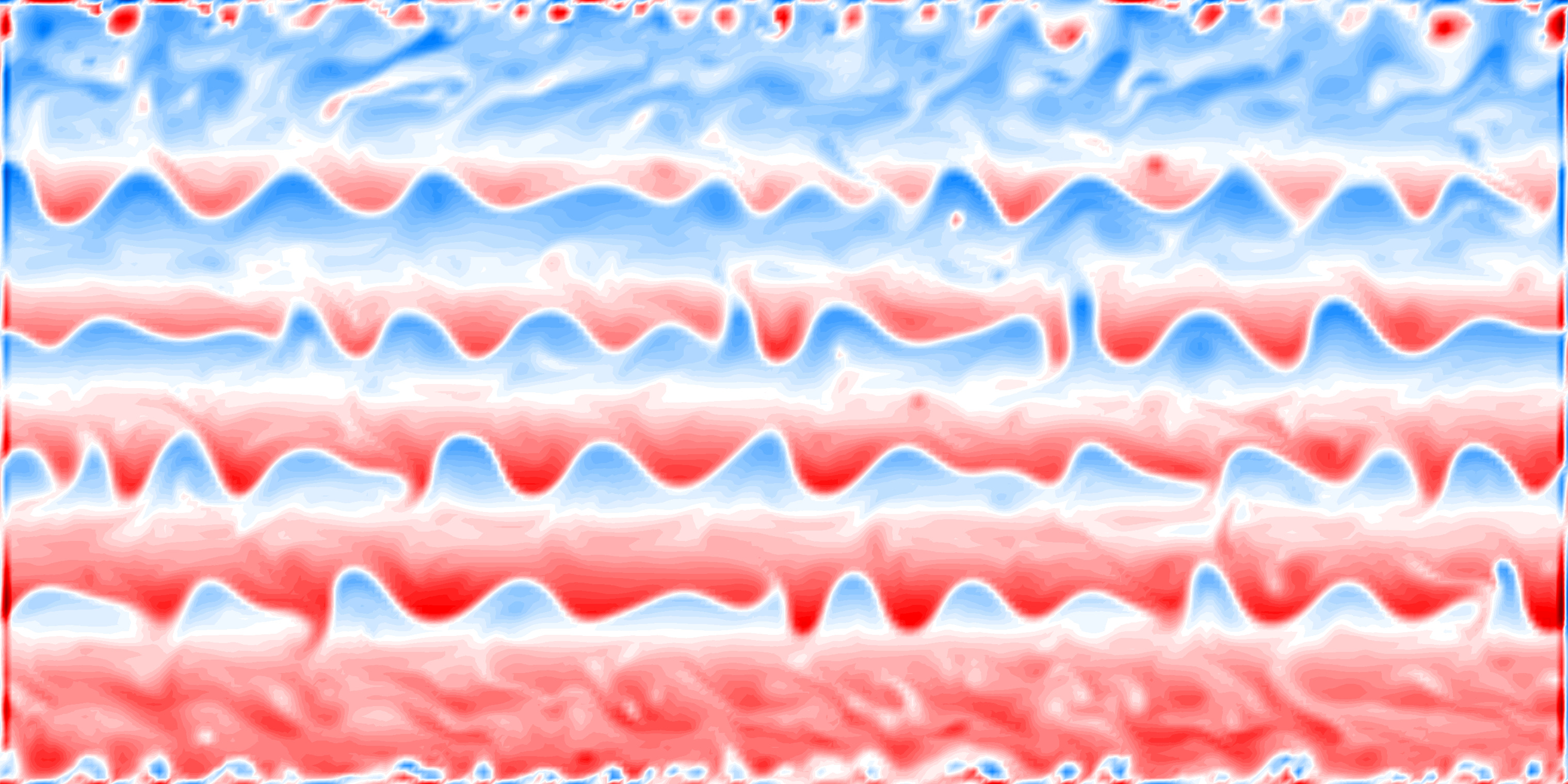}\end{minipage} 
& \hspace*{-0.25cm}\begin{minipage}{0.31\textwidth}\includegraphics[width=5cm,height=2.5cm]{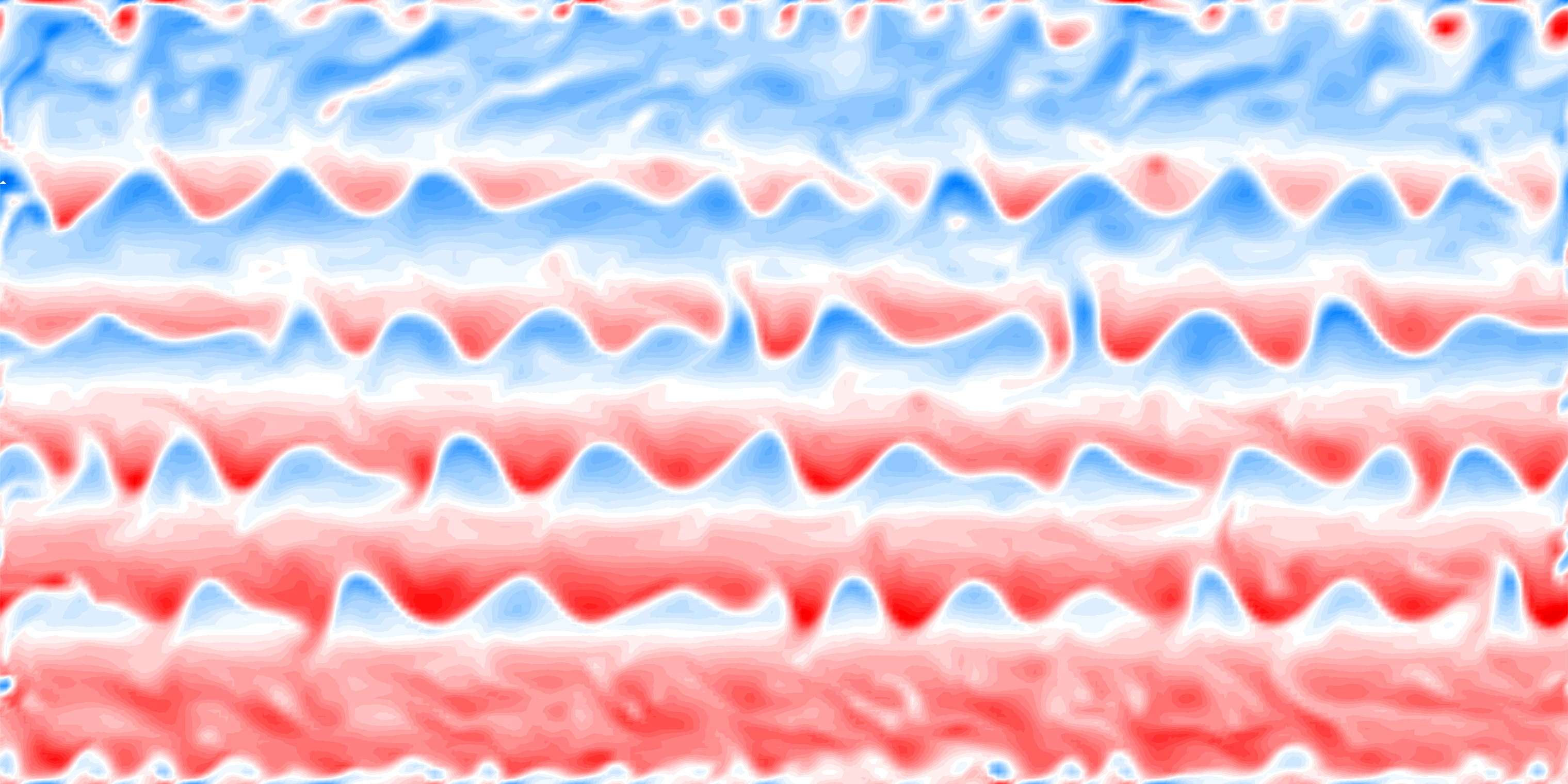}\end{minipage}
\\
\\[-0.25cm]
\begin{minipage}{0.02\textwidth}\rotatebox{90}{$t=20$ days}\end{minipage} 
& \hspace*{-0.25cm}\begin{minipage}{0.31\textwidth}\includegraphics[width=5cm,height=2.5cm]{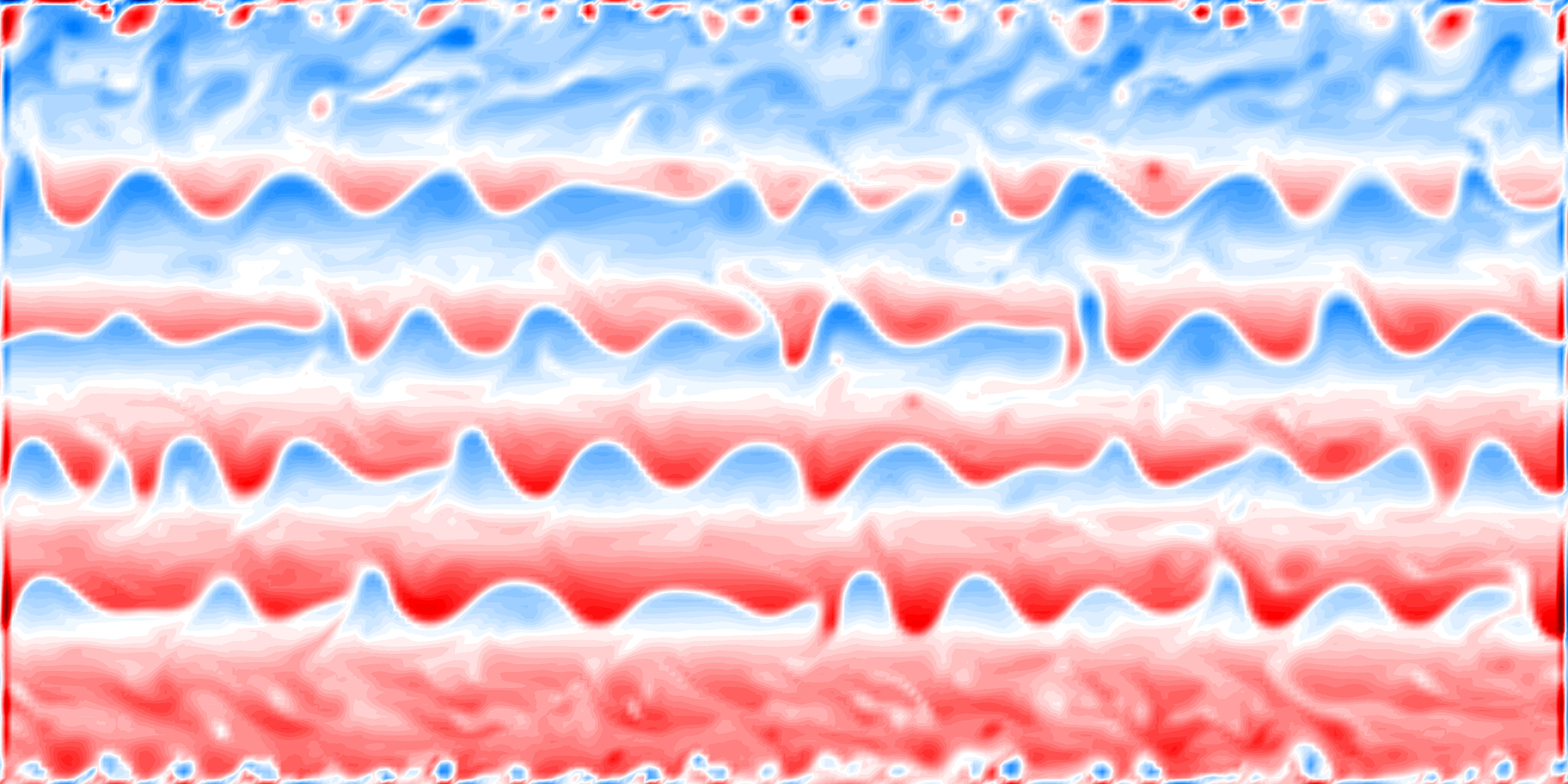}\end{minipage} 
& \hspace*{-0.25cm}\begin{minipage}{0.31\textwidth}\includegraphics[width=5cm,height=2.5cm]{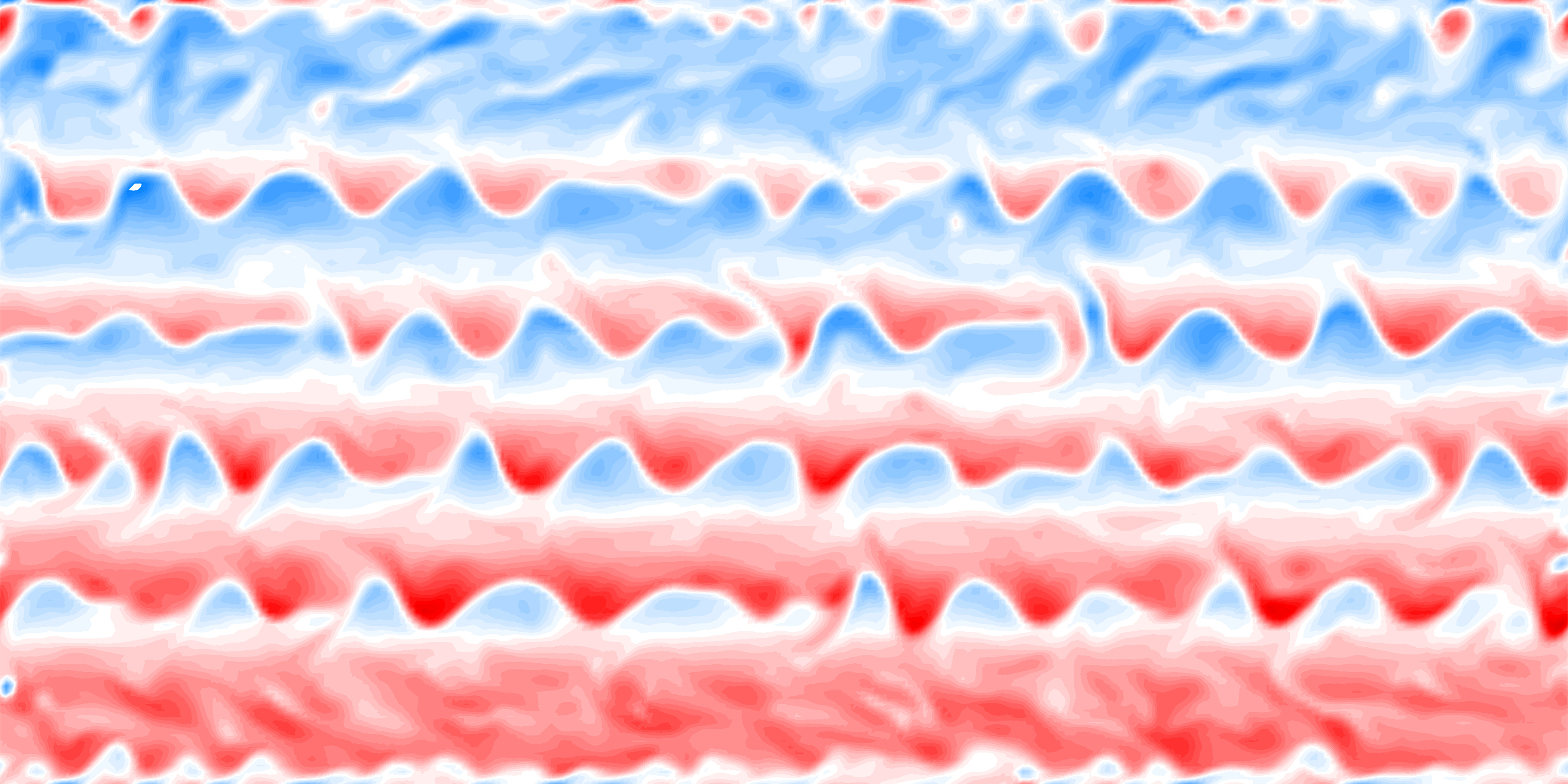}\end{minipage}
\\
& &\\[-0.25cm]
\multicolumn{3}{c}{\hspace*{0.3cm}\includegraphics[width=6cm,height=0.75cm]{figs/colorbar_bwr_mD-5_pD-5.jpg}}\\
\end{tabular} 
\caption{The series of snapshots shows the true deterministic solution $q^a_1$
and parameterised solution $\bar{q}^p_1$ (averaged over the stochastic ensemble of size $N=100$) computed with the data assimilation
Algorithm 5. 
All the solutions were computed on the same coarse grid $G^c=257\times129$, have the same initial condition,
and the parameterised solution uses 32 leading EOFs, and 32 weather stations.
All the fields are given in units of $[s^{-1}f^{-1}_0]$, where $f_0=0.83\times10^{-4}\, {\rm s^{-1}}$ is the Coriolis parameter.
}
\label{fig:deter_vs_stoch_mu4D-8_257x129}
\end{figure}

As Fig.~\ref{fig:deter_vs_stoch_mu4D-8_257x129} shows, the paramaterised solution $\bar{q}^p_1$ using the data assimilation methodology based
on tempering, jittering, and nudging (Algorithm 5) gives an accurate forecast of the true state $q^a_1$ even with a very small number
of weather stations.

\section{Conclusion and future work}
\label{sec:concl}

In this paper, we have reported on our recent progress in developing an ensemble based data assimilation methodology for high dimensional fluid dynamics models. Our methodology involves a particle filter which combines model reduction, tempering, jittering, and nudging.
The methodology has been tested on the two-layer quasi-geostrophic model with $O(10^6)$ degrees of freedom. Only a minute fraction of these are noisily observed (16 and 32 weather stations). The model is reduced by following the stochastic variational approach for geophysical fluid dynamics introduced in~\cite{holm2015variational}. We have also introduced a stochastic time-stepping scheme for the quasi-geostrophic model and have proved its consistency in time. In addition, we have analyzed the effect of different procedures (tempering, jittering, and nudging) on the accuracy and uncertainty of the stochastic spread.
Our main findings are as follows:
\begin{itemize}
    \item The tempering and jittering procedure (Algorithm 4) improves the accuracy by reducing the uncertainty of the stochastic spread;
    \item The nudging procedure (Algorithm 5) brings major improvements to the combinations of the tempering and jittering (Algorithm 4),
    both in terms of the relative bias (RB) and ensemble mean relative $l_2$-norm error (EME)
    \item The number of weather stations has a minor effect on the RB and EME;
    \item The size of the data assimilation step has a substantial effect; namely, the smaller the data assimilation step, the higher the accuracy;
    \item The resolution of the signal grid significantly improves the accuracy and reduces the uncertainty of the stochastic spread;
    \item The proposed data assimilation methodology corrects the bias introduced by the paramaterisation and produces a reliable forecast.
\end{itemize}
We regard the data assimilation method based on tempering and jittering combined with the nudging method proposed here as a potentially valuable addition to data assimilation methodologies. We also expect it to be useful in developing data assimilation methodologies for larger, more comprehensive ocean models.
The combination of the four components presented here (model reduction, tempering, jittering and nudging) can be enhanced by further improvements including localization, space-time data assimilation, etc. Applications of these combined components will form the subject of subsequent work. 

\begin{acknowledgements}
The authors thank The Engineering and Physical Sciences Research Council for the support of this work through the grant EP/N023781/1.
The work of the second author has been partially supported by a UC3M-Santander Chair of Excellence grant held at the Universidad Carlos III de Madrid.
\end{acknowledgements}

%
\section*{Conflict of interest}
The authors declare that they have no conflict of interest.

\bibliographystyle{abbrv}
\bibliography{da_sqg}   


\end{document}